\theoremstyle{plain}
\newtheorem*{theorem*}{Theorem}
\newtheorem{theorem}{Theorem}[section]
\newtheorem{lemma}[theorem]{Lemma}
\newtheorem{claim}[theorem]{Claim}
\newtheorem{proposition}[theorem]{Proposition}
\newtheorem*{claim*}{Claim}
\newtheorem{corollary}[theorem]{Corollary}
\newtheorem{problem}[theorem]{Problem}
\theoremstyle{remark}
\def\N{\mathbb{N}}
\def\Z{\mathbb{Z}}
\def\R{\mathbb{R}}
\def\P{\mathbb{P}}
\def\E{\mathbb{E}}
\def\CR{\mathcal{R}}
\def\CB{\mathcal{B}}
\def\B{\mathbf}
\def\Scr{\mathscr}
\DeclareMathOperator\Po{Po}
\DeclareMathOperator\Geom{Geom}
\DeclareMathOperator\Exp{Exp}
\DeclareMathOperator\V{Var}
\let\eps\varepsilon
\newcommand\one{\mathbbm{1}}
\let\originalleft\left
\let\originalright\right
\renewcommand{\left}{\mathopen{}\mathclose\bgroup\originalleft}
\renewcommand{\right}{\aftergroup\egroup\originalright}
\def\imod#1{\allowbreak\mkern10mu({\operator@font mod}\,\,#1)}
\begin{document}

\title{Coalescence on the real line}

\author{Paul Balister}
\address{Department of Mathematical Sciences, University of Memphis, Memphis TN 38152, USA}
\email{pbalistr@memphis.edu}

\author{B\'{e}la Bollob\'{a}s}
\address{Department of Pure Mathematics and Mathematical Statistics, University of Cambridge, Wilberforce Road, Cambridge CB3\thinspace0WB, UK, \emph{and\/}
Department of Mathematical Sciences, University of Memphis, Memphis TN 38152, USA, \emph{and\/} London Institute for Mathematical Sciences, 35a South St., Mayfair, London W1K\thinspace2XF, UK}
\email{b.bollobas@dpmms.cam.ac.uk}

\author{Jonathan Lee}
\address{Mathematical Institute, University of Oxford, Andrew Wiles Building, Radcliffe Observatory Quarter, Woodstock Road, Oxford OX2\thinspace6GG, UK}
\email{jonathan.lee@merton.ox.ac.uk}

\author{Bhargav Narayanan}
\address{Department of Pure Mathematics and Mathematical Statistics, University of Cambridge, Wilberforce Road, Cambridge CB3\thinspace0WB, UK}
\email{b.p.narayanan@dpmms.cam.ac.uk}

\date{1 October 2016}
\subjclass[2010]{Primary 60K35; Secondary 60D05, 60G55}

\begin{abstract}
We study a geometrically constrained coalescence model derived from spin systems. Given two probability distributions $\P_R$ and $\P_B$ on the positive reals with finite means, colour the real line alternately with red and blue intervals so that the lengths of the red intervals have distribution $\P_R$, the lengths of the blue intervals have distribution $\P_B$, and distinct intervals have independent lengths. Now, iteratively update this colouring of the line by coalescing intervals: change the colour of any interval that is surrounded by longer intervals so that these three consecutive intervals subsequently form a single monochromatic interval. We say that a colour (either red or blue) \emph{wins} if every point of the line is eventually of that colour. Holroyd, in 2010, asked the following question: under what natural conditions on the initial distributions is one of the colours almost surely guaranteed to win? It turns out that the answer to this question can be quite counter-intuitive due to the non-monotone dynamics of the model. In this paper, we investigate various notions of `advantage' one of the colours might initially possess, and in the course of doing so, we determine which of the two colours emerges victorious for various nontrivial pairs of initial distributions.
\end{abstract}

\maketitle

\section{Introduction}
The object of study in this paper is a one-dimensional geometrically constrained coalescence model. This model, \emph{coalescence on the real line}, describes the evolution of a colouring of the real line into intervals: a colouring $\Delta$ of the real line $\R$ with two colours, red and blue, is a \emph{colouring into intervals} if there is a doubly infinite sequence of points $(p_i)_{i \in \Z}$, with $p_i < p_j$ when $i < j$ and $\R = \bigcup_{i \in \Z} (p_i, p_{i+1}]$, such that the interval $(p_{2i-1}, p_{2i}]$ is coloured red and the interval $(p_{2i}, p_{2i+1}]$ is coloured blue for each $i \in \Z$; we call the points $(p_i)_{i \in \Z}$ the \emph{boundary-points} of the colouring. In coalescence on the real line, or \emph{linear coalescence} for short, we evolve a colouring of the real line into intervals by repeatedly coalescing intervals together according to the following rule: change the colour of any monochromatic interval of the colouring that is surrounded by longer monochromatic intervals of the opposite colour so that the three consecutive intervals are of the same colour; these three intervals are subsequently taken to be a single monochromatic interval. We call such a step in which three consecutive monochromatic intervals of the colouring are merged into a single monochromatic interval a \emph{recolouring}; see Figure~\ref{amalg} for an illustration. Here, our primary focus is the following question: what can we say about the dynamics of linear coalescence when the initial colouring is stochastic? To make this question precise, we need a few definitions.

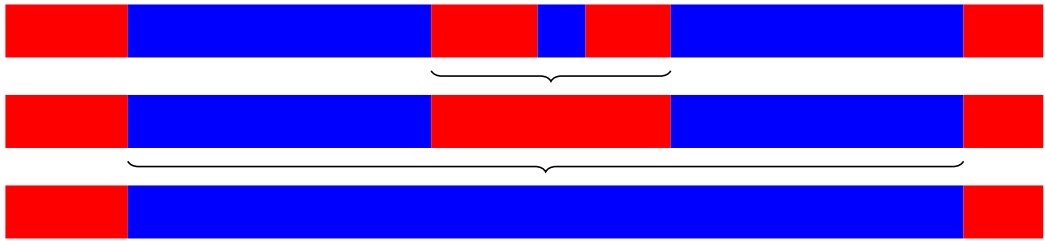
\begin{figure}
	\begin{center}
		\begin{tikzpicture}[xscale = 0.7,yscale = 0.8]
		\draw [red, line width=20](-8,4)--(-5.7,4);
		\draw [blue, line width=20](-5.7,4)--(0,4);
		\draw [red, line width=20](0,4)--(2,4);
		\draw [blue, line width=20](2,4)--(2.9,4);
		\draw [red, line width=20](2.9,4)--(4.5,4);
		\draw [blue, line width=20](4.5,4)--(10,4);
		\draw [red, line width=20](10,4)--(11.5,4);
		
		\draw [red, line width=20](-8,2.5)--(-5.7,2.5);		
		\draw [blue, line width=20](-5.7,2.5)--(0,2.5);
		\draw [red, line width=20](0,2.5)--(4.5,2.5);
		\draw [blue, line width=20](4.5,2.5)--(10,2.5);
		\draw [red, line width=20](10,2.5)--(11.5,2.5);
		
		\draw [red, line width=20](-8,1)--(-5.7,1);
		\draw [blue, line width=20](-5.7,1)--(10,1);
		\draw [red, line width=20](10,1)--(11.5,1);
		
		\draw [ultra thick, decorate, decoration={brace,amplitude=4pt,mirror}, semithick, yshift=4pt]
		(0,3.2) -- (4.5,3.2);
		
		\draw [ultra thick, decorate, decoration={brace,amplitude=4pt,mirror}, semithick, yshift=4pt]
		(-5.7,1.7) -- (10,1.7);
		\end{tikzpicture}
	\end{center}
	\caption{A sequence of two recolourings.}
	\label{amalg}
\end{figure}

Given a pair of probability distributions $\P_R$ and $\P_B$ on the positive reals with finite means $\mu_R$ and $\mu_B$ respectively, we may construct a colouring of the real line into intervals so that the lengths of the red intervals have distribution $\P_R$, the lengths of the blue intervals have distribution $\P_B$, and distinct intervals have independent lengths. Indeed, let $(\CR_i)_{i \in \Z}$ and $ (\CB_i)_{i \in \Z}$ be two  i.i.d.\ sequences of random variables (which are additionally independent of each other) with distributions $\P_R$ and $\P_B$ respectively; we then write $\Delta(\P_R, \P_B)$ for the random colouring of the real line into intervals constructed as follows: colour the interval $(-\CR_0, 0]$ red and the interval $(0, \CB_0]$ blue, then colour the interval $(\CB_0, \CR_1 + \CB_0]$ red and the interval $(-\CR_0 - \CB_{-1}, -\CR_0]$ blue, and so on, adding intervals of alternating colours to the left and right inductively. Clearly, the above colouring is shift-invariant with respect to the underlying sequences of interval lengths. Let us also note that there is nothing special about the precise choice of origin in the above construction, and also that the sequence of boundary-points of a colouring generated as described above is unbounded in both directions almost surely.

We say that a colouring $\Delta$ of the real line into intervals with boundary-points $(p_i)_{i \in \Z}$ is \emph{non-degenerate} if $p_k - p_j \neq p_l - p_k$ for any three boundary-points $p_j$, $p_k$ and $p_l$ with $j < k < l$. It is clear that no two adjacent monochromatic intervals will ever have the same length when we iteratively recolour intervals starting from a non-degenerate colouring. Henceforth, to avoid unnecessary complications, all our colourings of the line into intervals will be assumed to be non-degenerate. Observe that if at least one of $\P_R$ or $\P_B$ is non-atomic, then $\Delta(\P_R, \P_B)$ is almost surely non-degenerate; therefore, when considering stochastic colourings of the form $\Delta(\P_R, \P_B)$, we shall assume implicitly that at least one of $\P_R$ and $\P_B$ is non-atomic.

A sequence of recolourings is said to be \emph{complete} (with respect to an initial colouring $\Delta$) if whenever there exists a monochromatic interval $I$ surrounded by longer monochromatic intervals of the opposite colour at some stage of the evolution (in linear coalescence starting from $\Delta$), then $I$ is eventually recoloured. To see that the notion of a complete sequence of recolourings is meaningful, note that if a monochromatic interval $I$ can be recoloured, then its neighbours cannot; as its neighbours can only grow longer, recolouring $I$ remains an option forever. 

Here, we shall aim to understand the dynamics of linear coalescence starting from a stochastic initial colouring; the precise question that we shall be concerned with is the following.

\begin{problem}\label{p:first}
Given $\P_R$ and $\P_B$ at least one of which is non-atomic, what is the result of a complete sequence of recolourings applied to $\Delta(\P_R, \P_B)$?
\end{problem}

We shall see that for any such pair of distributions $\P_R$ and $\P_B$, there are only three possible outcomes (each of which has probability either $0$ or $1$): either every point changes colour finitely many times and is eventually red, every point changes colour finitely many times and is eventually blue, or the colour of every point changes infinitely often. Furthermore, it turns out to be the case (see Proposition~\ref{p:unique}) that the outcome depends only on the distributions themselves and is independent of the choice of complete sequence of recolourings; on account of this fact, when we speak about the `dynamics' of linear coalescence in the sequel, we shall mean the long-term behaviour of the evolution under an \emph{arbitrary} complete sequence of recolourings. 

Given a pair of distributions $\P_R$ and $\P_B$, we say that a colour (either red or blue) \emph{wins} if every point is eventually of that colour under any complete sequence of recolourings applied to $\Delta(\P_R, \P_B)$; on the other hand, if the colour of every point changes infinitely often in linear coalescence starting from  $\Delta(\P_R, \P_B)$, then we say that it is a \emph{tie}. 

It is easy to see that if $\P_R = \P_B$, then we must have a tie by symmetry. However, when $\P_R \neq \P_B$, one would expect the coalescence process to amplify any `advantage' possessed initially by one of the colours. We therefore restrict our attention to the following question in this paper.

\begin{problem}\label{p:main}
Under what natural conditions on the distributions $\P_R$ and $\P_B$ is one of the colours guaranteed to win?
\end{problem}

The coalescence model considered in this paper (and Problem~\ref{p:main} in particular) was proposed by Holroyd~\citep{lunch}. The primary motivation for studying this model comes from trying to better understand the behaviour of various models for interacting particle systems in the statistical physics literature. Many spin particle models~\citep{motiv1, motiv2, motiv3, motiv4} have been introduced in the physics literature to model, amongst other things, the liquid-glass transition, the formation of domains in magnetic systems, and the evolution of liquid droplets. In such models, one usually has a particle at each point of the cubic lattice $\Z^d$ and one specifies the state of each such particle; the model comes equipped with a local rule which governs the evolution of the states of the particles and one is typically interested in understanding the dynamics of the evolution of a random initial configuration. Coalescence on the line can be seen (when we discretise the real line, place a particle at each such resulting point, and then identify the colours red and blue with the two possible spin states of these particles) to be an example of such a spin particle model on $\Z$ where the local update rule goes beyond merely considering nearest-neighbour interactions. When the discretisation is taken to be fine, our model can in fact be seen exactly as the zero-temperature and large-domain limit of the coarsening of magnetic domains in one dimension; see~\citep{BRAY}, for example.

The process we study can also be thought of as a `hierarchical coalescence process' where the coalescences involving short intervals occur first. A variant of this process with $\P_R = \P_B$ where the recolourings take place in a sequence of distinct epochs has been the subject of some recent work; interesting results about such hierarchical coalescence processes have been obtained by Faggionato, Martinelli, Roberto and Toninelli~\citep{hcp1, hcp2}. Results about the dynamics of such hierarchical coalescence processes have been useful in explaining the universality in the limiting behaviour of spin particle models like the East model~\citep{East}; for details, see~\citep{hcp3}.

Problems~\ref{p:first} and~\ref{p:main} are also closely related to the question of constructing `stable matching schemes', a well-studied problem in combinatorics~\citep{stab_comb1, stab_comb2}, probability~\citep{stab_prob1, stab_prob2} and statistical physics~\citep{stab_phys1, stab_phys2}. Given a colouring of the real line into intervals, we can construct a perfect matching between the red and blue boundary-points of this colouring as follows: we coalesce the intervals using some complete sequence of recolourings, and every time we recolour an interval, we match the endpoints of that interval to each other. The resulting perfect matching between the red and blue boundary-points, assuming that each boundary-point prefers to be matched to a boundary-point of the opposite colour as close to it as possible, is easily seen to be stable in the sense of Gale and Shapley~\citep{GaleShapley}. For related work on constructing matchings on point sets arising from various point processes in a Euclidean space, we refer the reader to the papers of Ajtai, Koml{\'o}s and Tusn{\'a}dy~\citep{euclid1}, Holroyd, Pemantle, Peres and Schramm~\citep{euclid2}, and Holroyd~\citep{euclid3}.

Let us remark briefly that if we only coalesce intervals of a single colour, say blue, where adjacent blue intervals of length $x$ and $y$ merge together at rate $K(x,y)$ when both intervals are longer than the red interval between them, then we recover a geometrically constrained variant of the Marcus--Lushnikov model for stochastic coalescence; we refer the reader to the survey of Aldous~\citep{Aldous} for more about the Marcus--Lushnikov model and its variants. The main difference between such models and the model considered in this paper, and perhaps what makes linear coalescence particularly interesting, concerns monotonicity. Indeed, if we only merge blue intervals together, then it is not hard to see that the process is monotone with respect to the blue distribution, assuming of course that the rate kernel $K$ is suitably monotone. However, since a point can be recoloured an arbitrarily large number of times in linear coalescence, it turns out (see Claim~\ref{t:counter}) that we cannot expect any such monotonicity. Indeed, it would appear (see Claim~\ref{t:trans}) that the relationship between initial distributions induced by `winning' in the sense of Problem \ref{p:main} is \emph{not even transitive}!

It is also possible to study linear coalescence in continuous time. Indeed, place a balloon at each of the boundary-points and inflate these balloons at rate $1/2$ so that a balloon centred at some boundary-point has radius $t/2$ at time $t$; when two balloons meet, they both pop and we match the centres of these balloons and remove them. It is not hard to check that if we look at the balloons which remain at some time $t>0$, they correspond precisely to the endpoints of intervals in the process once every interval of length less than $t$ has been iteratively removed. One benefit of studying the process in continuous time is that one can say a great deal about the process in the case where $\P_R = \P_B$. As we remarked earlier, when $\P_R = \P_B$, the outcome of linear coalescence is a tie. However, in this case, one can actually use the machinery of hierarchical coalescence processes to say a lot more about the normalised lengths of the surviving intervals at each time $t>0$. By observing that the sequence of boundary-points which remain at any time $t$ is a renewal process, Eccles and Holroyd~\citep{Holroyd} have obtained results about the probability that a boundary-point survives to time $t$ in this continuous time process for various initial distributions, showing, for example, that in many cases, this probability is asymptotic to $K/t$ as $t \to \infty$ for some explicit constant $K>0$.

However, the techniques discussed above seem to be of little use when $\P_R \neq \P_B$. It is reasonable to believe that if one of the colours has enough of an advantage to begin with, then this advantage should amplify and that colour should win; the main difficulty appears to lie in finding the right notion of advantage, however. It is clear that blue wins when, for some $L > 0$, the length of each initial blue interval exceeds $L$ and the length of each initial red interval is at most $L$; indeed, we can recolour all the red intervals in one step. However, the task of proving that a particular colour wins for any \emph{nontrivial} pair of distributions does not seem to be straightforward. In this paper, we shall develop some combinatorial techniques to track the dynamics of linear coalescence in the case where $\P_R \neq \P_B$, and then use these techniques to investigate various natural notions of advantage; in doing so, we shall decide the outcome of linear coalescence for various nontrivial pairs of distributions.

\section{Our results}
Before we state our results, we remind the reader that here, and in what follows, we shall restrict our attention to the evolution of a non-degenerate initial colouring of the real line into intervals under a complete sequence of recolourings; when considering stochastic colourings of the form $\Delta(\P_R, \P_B)$ in particular, we shall assume implicitly that at least one of $\P_R$ and $\P_B$ is non-atomic. We begin with the following proposition that establishes the appropriate setting for our results.

\begin{proposition}\label{p:unique}
For any pair of probability distributions $\P_R$ and $\P_B$ (at least one of which is non-atomic), under any complete sequence of recolourings applied to $\Delta(\P_R, \P_B)$, either 
\begin{enumerate}
\item all points are eventually red almost surely (a red-win), or 
\item all points are eventually blue almost surely (a blue-win), or
\item every point changes colour infinitely often almost surely (a tie);
\end{enumerate}
furthermore, the outcome is independent of the choice of complete sequence of recolourings.
\end{proposition}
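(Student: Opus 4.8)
The plan is to establish three things in turn: (i) the dichotomy between a \emph{finite} regime (every point changes colour finitely often) and an \emph{infinite} regime (every point changes colour infinitely often), with a $0$--$1$ law making each a sure event; (ii) in the finite regime, that the eventual colour is the same (red or blue) at every point; and (iii) that the outcome does not depend on the choice of complete sequence of recolourings. The guiding principle throughout is ergodicity: the colouring $\Delta(\P_R,\P_B)$ is invariant under the shift of the underlying i.i.d.\ interval-length sequences, and all the events we care about are shift-invariant, hence have probability $0$ or $1$ — provided we choose the complete sequence of recolourings itself in a shift-equivariant way (e.g.\ always recolour, at each stage, the globally shortest recolourable interval, breaking ties by the non-degeneracy hypothesis). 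So first I would record that one may assume, without loss of generality for the existence statement, that the complete sequence is generated by this canonical shift-equivariant rule; the independence-of-choice claim (iii) is then handled separately at the end.

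Next, for the trichotomy: fix a point $x$ (say $x=0$) and consider the event $A_x$ that $x$ changes colour infinitely often. The key observation is that whether $x$ changes colour infinitely often is \emph{not} a local event, but the event ``\emph{some} point changes colour infinitely often'' \emph{is} shift-invariant, so it has probability $0$ or $1$. I would then argue that this event in fact coincides (up to null sets) with $A_x$ for every fixed $x$: if some point changes infinitely often then, by a connectivity/propagation argument, so does every point in a neighbourhood, and by shift-invariance the set of points changing infinitely often is either empty or all of $\R$ almost surely — here one uses that a recolouring merges three consecutive intervals into one, so a long interval far away cannot shield $x$ forever once the dynamics near $x$ are active. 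Thus almost surely either every point changes colour finitely often, or every point changes infinitely often; the latter is the tie case (3).

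In the finite regime, I need to show the eventual colour is globally constant. For each point $x$, let $c(x)\in\{\mathrm{red},\mathrm{blue}\}$ be its eventual colour. The set $\{x : c(x)=\mathrm{red}\}$ is a union of intervals, and I claim it is almost surely either empty or all of $\R$: the point is that if two adjacent eventual-colour classes were both nonempty, they would meet at a limiting boundary-point $p$, and the two semi-infinite monochromatic ``intervals'' abutting $p$ would be stable forever; but a genuinely semi-infinite interval cannot arise from a complete sequence of recolourings started from $\Delta(\P_R,\P_B)$, since the boundary-points are almost surely unbounded in both directions and (again by ergodicity, comparing the red and blue interval-length averages, which are finite) arbitrarily long monochromatic runs of the appropriate colour appear on both sides infinitely often, forcing further recolourings. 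Hence the eventual colour is constant; by shift-invariance it is a fixed colour, giving (1) or (2). I expect this step — ruling out a ``frozen interface'' in the finite regime — to be the main obstacle, and the crux is a quantitative comparison: using $\mu_R,\mu_B<\infty$ and the law of large numbers, on any side of a putative interface one finds, infinitely far out, a block of consecutive same-colour intervals whose total length dominates the lengths of the two flanking intervals, so that block can be recoloured — contradicting that the interface configuration is stable under a \emph{complete} sequence.

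Finally, for independence of the choice of complete sequence (claim (iii)), I would use an exchange/confluence argument: any two complete sequences of recolourings, viewed as sequences of ``merge these three intervals'' moves, can be interleaved, because a recolourable interval remains recolourable until it is recoloured (as noted in the excerpt, its neighbours cannot be recoloured while it can, and neighbours only grow). More precisely, I would show that if $\sigma$ and $\sigma'$ are two complete sequences, then every move made by $\sigma$ is eventually made by $\sigma'$ and vice versa — by induction on the order in which moves occur in $\sigma$, using that a move available at some stage stays available until performed, and a completeness means it must be performed — so the two sequences realize the same set of recolourings and hence the same long-term behaviour at every point. Combining this with (i) and (ii) yields the proposition; the $0$--$1$ dichotomy upgrades ``with positive probability'' to ``almost surely'' in each of the three cases, and since these are the only possibilities, exactly one of them holds almost surely.
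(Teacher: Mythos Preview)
Your overall architecture --- dichotomy via ergodicity, then analysis of the finite regime, then an exchange argument for independence --- matches the paper's. But two of the steps have genuine gaps.

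First, in the finite regime you only consider the possibility of a single ``frozen interface'' between a red half-line and a blue half-line. You have not ruled out the possibility that the final colouring consists of \emph{infinitely many} intervals of alternating colours. The paper handles this by first observing that, since no further recolouring is possible in the final state, the lengths of the final intervals must form a strictly unimodal sequence; hence if infinitely many intervals remain, their lengths are eventually decreasing in some direction and so bounded by some $L$, whereas with positive density in the initial colouring one finds blocks that coalesce into monochromatic intervals of length exceeding~$L$ --- a contradiction. Moreover, your argument for the two-semi-infinite case is not right: you assert that ``a genuinely semi-infinite interval cannot arise'', but a bi-infinite monochromatic interval is precisely what arises in cases (1) and (2), and a frozen pair of opposite-coloured semi-infinite intervals is perfectly stable under the recolouring rule (each has infinite length, so neither can be surrounded by longer intervals). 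Your law-of-large-numbers sketch does not produce a contradiction here. The paper instead rules this configuration out by symmetry: the events ``eventually red on the left, blue on the right'' and ``eventually blue on the left, red on the right'' are both shift-invariant, mutually exclusive, and (by reflection) equiprobable, so both have probability~$0$.

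Second, your exchange argument is not correct as stated. You claim that ``every move made by $\sigma$ is eventually made by $\sigma'$\,'', but the individual moves --- the specific triples being merged --- can genuinely differ between two complete sequences: once $\sigma'$ has recoloured something adjacent to an interval $I$, that interval no longer exists as such in $\sigma'$, so the move ``recolour $I$ between its current neighbours'' is not literally the same move. The observation that ``a recolourable interval remains recolourable until it is recoloured'' is a statement \emph{within} one sequence and does not let you compare across sequences. The correct invariant, which the paper establishes (via Lemma~\ref{l:order} and its extension in Lemma~\ref{l:unique}), is that the \emph{number of times each point is recoloured} is the same in every complete sequence. This is proved by showing that any complete sequence can be rearranged, without changing any point's recolouring count, so as to begin with the canonical ``recolour the shortest recolourable interval first'' move, and then inducting; independence of outcome then follows immediately since the final colour of a point is determined by the parity of its recolouring count.
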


In the light of Proposition~\ref{p:unique}, it is natural to expect that if the initial distribution of one of the colours has enough of an advantage over the initial distribution of the other colour, then this advantage should amplify and that colour should win; here, we shall consider two natural notions of advantage in linear coalescence. 

The first, and perhaps most elementary, notion of advantage that one can consider is based simply on a first moment condition: does a colour win almost surely if the mean of its distribution is substantially bigger than that of the distribution of the other colour? Our first result shows that this is not the case.

\begin{theorem}\label{t:mean}
For any $K>0$, there exist distributions $\P_R$ and $\P_B$ with $\mu_R>K\mu_B$ for which blue wins almost surely.
\end{theorem}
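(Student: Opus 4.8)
The plan is to construct $\P_B$ so that blue intervals are almost always short but occasionally enormous, while $\P_R$ is concentrated near a moderate value; despite $\mu_R > K\mu_B$, the rare long blue intervals act as ``seeds'' that trigger a cascade of recolourings eventually painting the whole line blue. Concretely, I would let $\P_B$ be a mixture: with probability $1-p$ the blue length is some tiny $\eps > 0$, and with probability $p$ it is a very large value $M$; and I would take $\P_R$ to be (say) a non-atomic distribution tightly concentrated around $1$ (a small perturbation of a point mass, to ensure non-degeneracy). Then $\mu_B \approx (1-p)\eps + pM$, which we can keep small by taking $p$ small relative to $1/M$ and $\eps$ small, so $\mu_R \approx 1 > K\mu_B$ is easily arranged for any fixed $K$.

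The key steps, in order: \textbf{(1)} First I would analyse what happens near an atypically long blue interval of length $M$. Since red intervals have length $\approx 1 \ll M$, the long blue interval is longer than both of its red neighbours, so each of those red neighbours is surrounded by longer blue intervals \emph{provided} the blue interval on the far side also has length $> 1$. Whenever two long-$M$ blue intervals occur with only a bounded number of short intervals between them, the short red intervals in between get recoloured blue, the short blue intervals merge in, and the entire stretch becomes one gigantic blue interval. \textbf{(2)} Next I would show that these ``blue-takeover'' events occur at a positive density of locations along the line: by a Borel--Cantelli / renewal argument, long-$M$ blue intervals appear infinitely often in both directions, and with positive probability two of them occur within a bounded window (in fact one can first merge many short blue intervals together over a long stretch to build up a moderately long blue interval, then use it against its red neighbours). \textbf{(3)} Then I would argue that once a very long blue interval is created, it only grows: any red interval adjacent to it has length $\approx 1$ and is eventually surrounded on the other side by a blue interval that — through the same local dynamics — has also grown long, so it gets recoloured, and the blue interval engulfs more of the line. \textbf{(4)} Finally, by shift-invariance and ergodicity of $\Delta(\P_R,\P_B)$ together with Proposition~\ref{p:unique}, showing that with positive probability every point is eventually blue upgrades to: blue wins almost surely.

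The main obstacle I expect is \textbf{step (1)--(3)}: controlling the cascade rigorously. The subtlety is that linear coalescence is non-monotone, so one cannot simply say ``more blue is better''; in particular, a freshly recoloured blue interval might later be swallowed by an even longer \emph{red} interval formed elsewhere. I would handle this by designing the distributions with well-separated scales — red lengths in a window around $1$, the ``small'' blue length $\eps$ much less than $1$, and the ``large'' blue length $M$ much larger than any red interval could ever become through coalescence over the relevant window (one must check that finitely many red intervals of length $\approx 1$ cannot coalesce into something of length comparable to $M$ before the blue cascade reaches them, which follows because a red interval can only be recoloured when flanked by \emph{longer} blue intervals, and near our blue seed the blue intervals are the long ones). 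Making the ``bounded window'' and ``positive density'' claims precise — i.e. quantifying how many short intervals between two $M$-blocks the cascade can absorb, and verifying this happens with positive probability at a given location — is the technical heart of the argument, but it reduces to an elementary renewal-theoretic computation once the scale separation is set up correctly.
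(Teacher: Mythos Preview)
Your construction is the wrong way around and does not produce a blue win. In your setup, a blue interval has length $\eps$ with probability $1-p$, so the red neighbour of a long-$M$ blue interval almost always has a \emph{short} blue on its far side; step~(1) therefore fails, because that red interval is not surrounded by two longer blues. What actually happens first is that every short-$\eps$ blue is flanked by two reds of length $\approx 1 > \eps$ and gets recoloured red. After this phase, each surviving (long-$M$) blue interval is flanked by a merged red interval comprising $Y$ original reds and $Y-1$ short blues, where $Y$ is geometric with mean $1/p$; thus the merged red length is $\approx Y$. But you need $pM \lesssim 1/K$ to force $\mu_B < \mu_R/K$, so $\P(Y \ge M) \approx (1-p)^{M-1} \approx e^{-pM} \ge e^{-1/K}$, which is close to~$1$ for large~$K$: most merged red intervals exceed $M$ and proceed to swallow the long-$M$ blues. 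Your assertion in step~(3) that ``any red interval adjacent to it has length $\approx 1$'' is exactly what fails --- the short blues cause massive red coalescence before any blue cascade can start.

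The paper's construction is essentially the mirror image, and this asymmetry is the whole point. It takes $\P_B = U[1,1+\eps]$, so \emph{every} blue interval is slightly longer than~$1$, and makes $\P_R$ equal to $1$ plus a sum $\sum_{i=1}^N k^i X_i$ with $X_i \sim \Po(\eps k^{-i})$, so that almost every red interval has length exactly~$1$ but the rare exponentially long ones inflate $\mu_R$ past $K\mu_B$. Now the typical red interval (length~$1$) is shorter than both its blue neighbours and gets recoloured, and the resulting long blue runs eventually overtake even the rare super-long reds. The rigorous argument uses the renormalisation machinery (Theorem~\ref{t:renorm}): one verifies that a block of $2k^t n$ intervals is typically $\B r$-good and controls the typicality parameters $\eta_t$ uniformly in~$t$, which is where most of the work goes.
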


The next, and much stronger, notion of advantage that we consider is that of stochastic dominance. We say that \emph{$\P_R$ stochastically dominates $\P_B$}, and write $\P_R \succcurlyeq \P_B$, if $\P_R([x, \infty)) \ge \P_B([x, \infty))$ for every $x \ge 0$; if this inequality is strict for at least one $x > 0$, we write $\P_R \succ \P_B$ and say that \emph{$\P_R$ stochastically dominates $\P_B$ strictly}. It is tempting to conjecture that a colour wins almost surely if its distribution stochastically dominates the distribution of the other colour strictly; however, rather counter-intuitively, this does not appear to be the case. In fact, even a combination of stochastic dominance and the first moment condition considered above is not sufficient to guarantee victory.

\begin{claim}\label{t:counter}
With very high confidence, for any $K>0$, there exist distributions $\P_R$ and $\P_B$ such that $\P_R \succ \P_B$ and $\mu_R>K\mu_B$ for which blue wins almost surely.
\end{claim}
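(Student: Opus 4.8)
The plan is to exhibit an explicit pair of distributions by a multi-scale construction, and then to show blue wins by tracking the coalescence stage by stage over the scales. The starting point is the construction behind Theorem~\ref{t:mean}, where $\P_B$ is essentially a point mass near $1$ while $\P_R$ puts almost all of its mass on a tiny value and small weights $q_1\gg q_2\gg\cdots$ on widely separated large values $M_1\ll M_2\ll\cdots$, arranged so that, after the tiny red intervals are absorbed, the blue runs that form stay long enough to go on swallowing the successive scales of red, and $\mu_R\approx\sum_k q_kM_k$ is made large by using many scales. The new requirement $\P_R\succ\P_B$ forces $\P_R([x,\infty))\ge\P_B([x,\infty))$ for all $x$, so $\P_B$ cannot be concentrated near $1$: its essential infimum must sit at or below that of $\P_R$, and its tail must lie below that of $\P_R$ everywhere. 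I would therefore give $\P_B$ a tiny atom at some $b_0$ just below the smallest red value $a_0$, together with a heavy tail on values $L_1<L_2<\cdots$ interleaved just below the red scales, $b_0<a_0<L_1<M_1<L_2<M_2<\cdots$, with blue tail weights $\beta_k$ light enough that $\P_R\succ\P_B$. Concretely I would take $M_k\approx L_k\approx C^k$, $q_k\approx C^{-k}$, and $\beta_k\approx C^{-2k}$ for a large constant $C$: then $\mu_B\approx\sum_k\beta_kL_k$ is of order $1$, while $\mu_R\approx\sum_k q_kM_k$ is of order $N$, the number of scales, so $N>K$ gives $\mu_R>K\mu_B$; smearing $\P_B$'s atoms into tiny blobs meets the non-atomicity convention.

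For the dynamics, Proposition~\ref{p:unique} lets us reduce to showing that a fixed point, say the origin, is eventually blue with positive probability. The first recolourings remove the $b_0$-intervals, the shortest intervals on the line, and each such recolouring \emph{merges two red intervals}: unlike in Theorem~\ref{t:mean}, the opening moves here favour red, and blue must recover over the later stages---this is the counter-intuitive heart of the claim. I would then run an induction on $k$, thinking of ``stage $k$'' as the state (reached eventually under any complete sequence of recolourings) in which every red of scale below $k$ has been absorbed: at that point, in a fixed window and with high probability, the line is a sequence of long blue intervals---each spanning of order $C^k$ now-absorbed reds and hence of length of order $kC^k$---separated by as-yet-untouched intervals of scale $\ge k$; each scale-$k$ red, of length $M_k\approx C^k$, is therefore flanked by blue intervals much longer than itself and gets recoloured, lengthening the blue intervals and setting up stage $k+1$. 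The probabilistic input is tail estimates for the geometric counts of low-scale intervals between consecutive high-scale ones, used to control the rare configurations in which a blue run is atypically short (so that it is instead swallowed, merging two large reds) and the short cascades of such events; one checks that each such event leaves behind a red interval still small relative to the typical blue-run length at the same or the next stage, hence eventually cleaned up, and that within a fixed window only finitely many such events occur. A union bound over the window and the scales, valid once $C$ is large compared with $N$, then gives that the origin is eventually blue almost surely.

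The crux---and presumably the reason the statement is a ``claim'', made ``with very high confidence''---is exactly this induction: one must verify rigorously that \emph{every} red feature in a bounded region, including those created by the ``bad'' recolourings that the tail now forced on $\P_B$ permits and those left by atypical fluctuations, is eventually absorbed, and that the stages do not interfere with one another across all $N$ scales. This is a delicate quantitative dance between the non-monotone dynamics and the tails of geometric sums; I would expect the cleanest treatment to establish the combinatorial skeleton above rigorously and to support the most delicate tail bounds with direct simulation of the evolution.
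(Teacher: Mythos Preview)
Your route is genuinely different from the paper's, and the sketch, while plausible in spirit, has a real gap.

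The paper does not attempt a direct stage-by-stage analysis of a multi-scale construction. Instead, it first builds a \emph{simple} pair---$\P_R$ uniform on $[1,1+c_2]$ and $\P_B$ uniform on $[0,c_1c_2]\cup[1+c_1c_2,1+c_2]$ with $c_1=0.08$, $c_2=0.01$---for which $\P_R\succ\P_B$ is immediate, and then applies the renormalisation machinery (Theorem~\ref{t:renorm2}): the entire question of whether blue wins reduces to a single inequality $q(n_0,\B{r})<0.058$ on a \emph{finite-dimensional} integral, namely the probability that the closure of $2n_0$ i.i.d.\ intervals fails to be $\B{r}$-good. This is exactly the quantity estimated by Monte Carlo, and the phrase ``with very high confidence'' refers precisely to the fact that if this bound failed, the observed simulation outcome (987/1000 good) would have probability below $10^{-12}$. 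Only \emph{after} this is the mean inflated, by superimposing Poisson scales onto $\P_R$ exactly as in Theorem~\ref{t:mean}; one checks that $q_{0,n}\to q_{0,0}$ and $\eta_{t,n}\to\eta_{t,0}$ uniformly in $t$ as $n\to\infty$, so the renormalisation criterion survives the perturbation.

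Your proposal, by contrast, bakes the multi-scale structure into both $\P_R$ and $\P_B$ from the outset and then tries to follow the coalescence by hand through all $N$ scales. The difficulty you yourself flag---that the first stage \emph{merges reds} and blue must then recover through a cascade of geometric fluctuations across interacting scales---is exactly the sort of non-monotone dynamics the paper's methods are designed to sidestep. More importantly, your ``high confidence'' ingredient is not attached to any concrete, finite computation: you say you would ``support the most delicate tail bounds with direct simulation'', but you have not isolated a single numerical inequality whose truth would imply the result via a rigorous reduction. Without such a reduction (which is precisely what Theorems~\ref{t:renorm} and~\ref{t:renorm2} provide), simulation is evidence, not proof-with-confidence. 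The paper's two-step strategy---renormalise first, then perturb---is what makes the Monte Carlo step both well-defined and decisive.
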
 

Our next result provides further evidence that linear coalescence is far from being monotone with respect to the lengths of the intervals in the initial colouring. Consider the relation $\rhd$ on the space of probability distributions on the positive reals with finite means defined by saying that $\P_R \rhd \P_B$ if red wins when the initial red and blue distributions are $\P_R$ and $\P_B$ respectively. One of the main difficulties in analysing linear coalescence stems from the fact that $\rhd$ does not appear to be transitive.

\begin{claim}\label{t:trans}
With very high confidence, there exist distributions $\P_R$, $\P_G$ and $\P_B$ such that $\P_R \rhd \P_G$, $\P_G \rhd \P_B$ and $\P_B \rhd \P_R$.
\end{claim}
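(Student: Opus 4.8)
The dynamics of linear coalescence are unchanged under rescaling the real line, so the relation $\rhd$ is scale-invariant and we may restrict attention to distributions that are, up to a negligible non-atomic perturbation enforcing non-degeneracy, supported on a finite set of widely separated length scales $1 \ll \sigma_1 \ll \sigma_2 \ll \cdots \ll \sigma_m$. The plan is to exhibit three such multi-scale distributions $\P_R$, $\P_G$ and $\P_B$ whose three pairwise games are won cyclically; each of the three `win' verifications will then be of the same nature as the one behind Theorem~\ref{t:mean}.

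Recall the phenomenon underlying Theorem~\ref{t:mean}: a colour whose mean vastly exceeds that of its opponent can nonetheless lose, because its length is concentrated in rare, widely spaced intervals. The engine we need is the two-colour primitive behind this. Take colour $X$ concentrated near a scale $s$, and colour $Y$ with the bulk of its mass near a much smaller scale $t$ together with a small amount of mass, of relative density $p$, near a much larger scale $u$. The scale-$t$ intervals of $Y$ are shorter than both of their $X$-neighbours, so they are recoloured first; this fuses the $X$-intervals into runs whose lengths form a renewal sequence with mean of order $s/p$. Whether these merged $X$-runs overtake and swallow the rare scale-$u$ intervals of $Y$ --- triggering a cascade in which $X$ wins --- or are instead themselves swallowed once the scale-$u$ intervals of $Y$ start to merge with one another, comes down to the competition between the typical run length $s/p$ and $u$: heuristically, $X$ wins when $p$ is somewhat below $s/u$ and $Y$ wins when $p$ is somewhat above it, the two regimes being separated by a threshold window whose analysis calls for a concentration estimate for the renewal structure of the runs together with a subcritical-percolation estimate ruling out long chains of anomalously long runs. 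This is exactly the competition dissected for Theorem~\ref{t:mean} and Claim~\ref{t:counter}, so the quantitative control we need is available.

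For a pair of multi-scale distributions the coalescence proceeds as an iterated application of this primitive: intervals are consumed in increasing order of scale, and at each new scale one faces a single-scale comparison fed by the renewal structure surviving from below, so the overall outcome is a composition of threshold decisions. As with intransitive dice, spreading the masses of $\P_R$, $\P_G$ and $\P_B$ over enough scales buys enough freedom to realise a genuine $3$-cycle: giving each distribution a bulk at one scale together with a carefully calibrated larger-scale spike (and, where needed, a smaller-scale sprinkle), and interleaving the spikes with the bulks so that no single scale decides all three games, one arranges the composed comparison to favour red in the match-up $R$ versus $G$, green in $G$ versus $B$, and blue in $B$ versus $R$, which is the assertion of the claim.

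The main obstacle is that, linear coalescence being badly non-monotone, the winner of each pairwise game is exquisitely sensitive to the empirical law of the intervals that survive the lower-scale coalescences: a small error in that law can flip the outcome at the next scale up. Making the decoupling between scales quantitative, and controlling the surviving renewal structure sharply enough to stay on the intended side of every threshold, is the delicate step; and since `a colour wins' is a tail event, these estimates must be fed through an ergodicity argument rather than used at the level of expectations. For the explicit parameter values in our construction we confirm the borderline estimates by direct Monte Carlo simulation of the coalescence dynamics, and it is in this sense that the claim is asserted only with very high confidence.
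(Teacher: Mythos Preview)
Your proposal takes a very different route from the paper, and the route you sketch has a genuine gap.

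The paper's proof is short and concrete. It sets $\P_R$ to be the point mass at $1$, $\P_G = \Exp(1.22)$, and $\P_B = U[0,2.19]$: three simple one-parameter distributions, with no multi-scale structure and no calibrated spikes. For each of the three ordered pairs it invokes Theorem~\ref{t:renorm2}, which reduces ``$X$ beats $Y$'' to a single numerical inequality on the probability $q(n_0,\B{r})$ that a random coloured-interval built from $2n_0$ subintervals is $\B{r}$-bad (with $n_0 = 2\times 10^6$ and $\B{r} = (0.23, 1.04, 10)$). Those three inequalities are then verified by Monte Carlo simulation of the coalescence on this finite block. Nothing further is needed.

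Your proposal instead engineers three multi-scale distributions and argues for the three wins by a scale-by-scale analysis: run the coalescence up through the scales, treating each new scale as a fresh renewal-versus-spike comparison fed by the survivors from below. The gap is precisely the one you yourself flag in your final paragraph. Because the process is non-monotone, you cannot rigorously propagate approximate control of the survivors' law through successive scales; an $o(1)$ error in that law at one scale can flip the outcome at the next, and you offer no mechanism for preventing this. The paper's machinery does not supply the ``concentration for the renewal structure'' or ``subcritical-percolation'' estimates you invoke, and the only rigorous bridge from finite information to the infinite process available here is the renormalisation argument applied \emph{once} to the full pair of distributions, not iterated across your internal scales. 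If in the end you fall back on Monte Carlo estimation of $q(n_0,\B{r})$ through Theorem~\ref{t:renorm2}, then the multi-scale scaffolding, the iterated threshold heuristic, and the intransitive-dice engineering are all redundant: the paper shows that already the simplest natural distributions exhibit the cycle.
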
 

The phrase `with very high confidence' merits explanation. We show that the proofs of Claims~\ref{t:counter} and~\ref{t:trans} may be reduced to the task of bounding certain \emph{finite-dimensional} numerical integrals, which are nevertheless of sufficiently high dimension as to be impractical to evaluate. Instead, we \emph{estimate} them by Monte Carlo methods. Furthermore, we show that if it was the case that the required bounds did not hold, the probability of our Monte Carlo results would be very small (of the order $10^{-12}$). In the past, such Monte Carlo methods have been used to prove high confidence intervals for the critical probabilities of various percolation models; for some background, we refer the reader to the papers of Bollob\'as and Stacey~\citep{highconf1}, and Balister, Bollob\'as and Walters~\citep{highconf2}.

When we are given a pair of distributions $\P_R$ and $\P_B$ and are faced with the task of deciding the outcome of linear coalescence with these initial distributions, a natural first step is to study the coalescence process with these distributions on a large finite interval. If a large fraction of this finite interval turns blue (as in Figure~\ref{finite}) with a reasonably large probability, one would be inclined to believe that blue wins. It is possible in certain cases (see Theorem~\ref{t:renorm}) to actually deduce the outcome of linear coalescence from the typical outcome of the coalescence process on a large finite interval; the proofs of Theorem~\ref{t:mean}, Claim~\ref{t:counter} and Claim~\ref{t:trans} hinge upon this idea.

While stochastic dominance, in the light of Claim~\ref{t:counter}, does not seem to be enough in general to guarantee victory, our next theorem, which is a positive result, states that if one of the colours has a `sufficiently large' stochastic advantage to begin with, then that colour wins. To state this theorem, our main result, we need a few definitions. Define $\Scr{F}(\lambda)$ to be the \emph{shifted exponential distribution} on $[1,\infty)$ with density function
\[ 
\frac{\exp(-(x-1)/\lambda)}{\lambda} \one_{\{x\ge 1\}}.
\]
In other words, if the distribution of a random variable $X$ is $\Scr{F}(\lambda)$, then  $X-1$ is an exponential random variable with mean $\lambda$. Next, define $\Scr{G}(a)$ to be the \emph{Pareto distribution} on $[1,\infty)$ with density function
\[
\frac{2(a+1)^2}{(a+x)^3}\one_{\{x\ge 1\}}.
\]
Note that if the distribution of a random variable $X$ is $\Scr{G}(a)$, then $\P(X\ge x)=(a+1)^2/(a+x)^2$ for all $x\ge1$. With these definitions in place, we can now state our main theorem.

\begin{theorem}\label{t:main}
There exists a constant $\Lambda < 14$ such that for all $\lambda > \Lambda$ and all $a \in [0,1)$, if $\P_B \succcurlyeq \Scr{F}(\lambda)$ and $\Scr{G}(a) \succcurlyeq \P_R$, then almost surely blue wins.
\end{theorem}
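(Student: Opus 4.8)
The plan is to reduce the claim to a statement about the coalescence process on a large but finite interval and then feed that into the renormalisation machinery of Theorem~\ref{t:renorm}. By the monotonicity that is implicit in the definition of winning with respect to stochastic dominance of the favoured colour together with the reverse dominance of the disfavoured colour—more precisely, by coupling $\Delta(\P_B,\P_R)$ with $\Delta(\Scr{F}(\lambda),\Scr{G}(a))$ so that every blue interval in the first is at least as long as the corresponding blue interval in the second and every red interval is at most as long—it suffices to prove the theorem in the extremal case $\P_B=\Scr{F}(\lambda)$ and $\P_R=\Scr{G}(a)$. One has to be slightly careful here, since Claims~\ref{t:counter} and~\ref{t:trans} warn us that such couplings do not in general transfer the winner; the point is that Theorem~\ref{t:renorm} presumably establishes the outcome from a finite-interval event that \emph{is} monotone (an event of the form ``a long run of points all turn blue after finitely many recolourings, witnessed within a bounded window''), and this monotone event is preserved under the coupling. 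So the first step is to isolate precisely the monotone finite-volume event $E_n$ that Theorem~\ref{t:renorm} requires, and to record that it is increasing in the blue lengths and decreasing in the red lengths.

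Next I would analyse the finite-interval process for $\P_B=\Scr{F}(\lambda)$, $\P_R=\Scr{G}(a)$ with $\lambda$ large. The intuition is that a typical blue interval has length $1+\Theta(\lambda)$, while a red interval has length $1+O(1)$ with a heavy (but only quadratic) tail, so that on any scale up to roughly $\lambda$ the blue intervals dwarf their red neighbours and essentially all red intervals get absorbed in the first few rounds of recolouring. The heavy tail of $\Scr{G}(a)$ is what makes this nontrivial: an occasional red interval of length comparable to $\lambda$ will survive the initial coalescence, and one must show that such surviving red intervals are sparse enough that the blue intervals between consecutive survivors, once merged, are \emph{still} typically much longer than the survivors, so a second round of coalescence removes them, and so on. Because $\P(\Scr{G}(a)\ge x)\asymp x^{-2}$ while the merged blue intervals grow linearly in the number of blocks merged, a direct computation of the relevant sums (number of survivors of scale $\ge s$ per unit length is $\asymp s^{-2}$, so the gaps between them are $\asymp s^2 \gg s$) shows the hierarchy collapses geometrically; the constant $\Lambda<14$ comes out of making this ``survivors are sufficiently sparse at every scale'' estimate quantitative and uniform in $a\in[0,1)$ (the role of $a<1$ being to keep $\Scr{G}(a)$ stochastically below $\Scr{G}(1)$, the worst case, while keeping the mean and the tail constant under control).

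Concretely, the key steps in order are: (i) reduce to the extremal pair via the coupling and the monotone finite-volume event; (ii) for the extremal pair, run a multi-scale argument on a finite interval of length $n$—define scales $s_k$ growing geometrically from $O(1)$ up to $O(\lambda)$, and show by induction on $k$ that with probability tending to $1$ (as $n\to\infty$, uniformly once $\lambda>\Lambda$) every red interval of length $\le s_k$ present after round $k-1$ has been eliminated by round $k$, using the tail bound on $\Scr{G}(a)$ to control how much blue mass sits between consecutive unremoved reds; (iii) conclude that the event $E_n$ (all but a vanishing fraction of $[0,n]$ is blue, stably) holds with the probability threshold demanded by Theorem~\ref{t:renorm}; (iv) invoke Theorem~\ref{t:renorm} to upgrade this to a blue-win on the whole line. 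The main obstacle is step (ii), and within it the heavy-tailed reds: one must set up the induction so that the ``bad'' events at different scales are controlled by a summable series (the $s_k^{-2}$-type bounds), and one must track the numerical constants carefully enough to certify $\Lambda<14$—this is where a slightly clever choice of the scale sequence $s_k$ and of the round structure, rather than the crude choice, is needed to push the threshold below $14$.
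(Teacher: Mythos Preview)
Your reduction in step (i) does not work, and this is the essential gap. The finite-volume event in Theorem~\ref{t:renorm} is that the closure $[C]$ of a coloured-interval contains a long central blue subinterval; this event is a function of the full coalescence dynamics on $C$, and the whole thrust of Claims~\ref{t:counter} and~\ref{t:trans} is that those dynamics are \emph{not} monotone in the interval lengths. Making every blue interval longer and every red interval shorter can destroy the ``$\B{r}$-good'' event. You acknowledge the danger and then assume it away, but there is no monotone finite-volume witness of the required form to isolate. Without step (i), step (ii) at best handles the single extremal pair $(\Scr{F}(\lambda),\Scr{G}(a))$ and says nothing about general $\P_B\succcurlyeq\Scr{F}(\lambda)$, $\Scr{G}(a)\succcurlyeq\P_R$.

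The paper does not use Theorem~\ref{t:renorm} here at all. It introduces a separate scheme, the $\ell$-bounding argument of Section~\ref{s:strat}, designed precisely to survive stochastic domination. One maintains, for each red-ended block $C$ and blue interval $B$, bounds $\ell(C)\ge r(C)$ (the red-content of $[C]$) and $\ell(B)\le|B|$, and recolours based on these bounds rather than on true lengths. The red bounds are updated via the inequality $r(C_1+B_1+\dots+C_k)\le 2^{\lceil\log_2 k\rceil}\sum_i r(C_i)$ (Corollary~\ref{c:multi}), and the whole procedure is manifestly stable under replacing the $\ell(C_i)$ by anything stochastically larger and the $\ell(B_i)$ by anything stochastically smaller. \emph{This} is where the hypotheses enter: they let one initialise the bounds as i.i.d.\ $\Scr{G}(a)$ and $\Scr{F}(\lambda)$ variables regardless of the actual $\P_R,\P_B$. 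The heart of the proof is then a closure lemma (Lemma~\ref{l:reddom}) showing that after one step of the bounding scheme with threshold $1+\eps$ the red bounds are again dominated by a $\Scr{G}(a+\Lambda\eps)$ variable; after rescaling by $1+\eps$ one obtains a recursion on parameters $(a_t,\lambda_t)$ with $a_t\le 1-\delta$ for all $t$ and $\lambda_t\to\infty$ geometrically, from which the blue-win follows directly. The constant $\Lambda<14$ is extracted from Lemma~\ref{l:reddom}, not from a multi-scale survivor count of the kind you sketch.
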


\begin{figure}
	\begin{subfigure}{\textwidth}
		\centering
		\includegraphics[width= 0.95\textwidth, height = 4.8cm]{./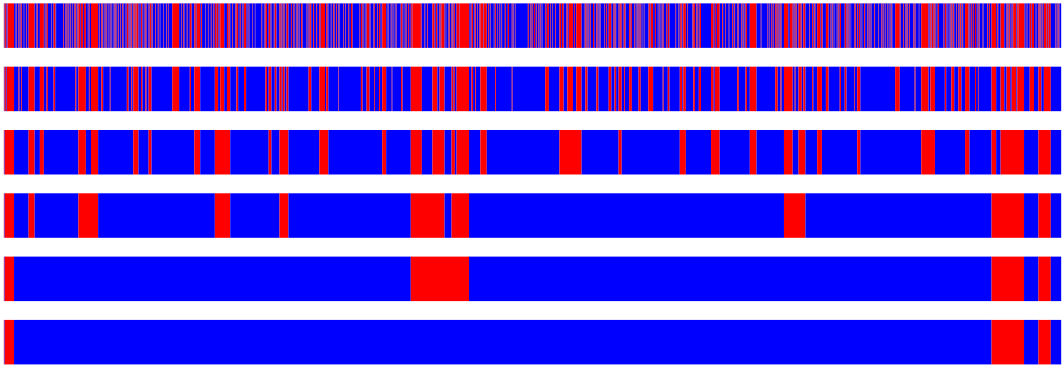}
		\vspace{0.5cm}
	\end{subfigure}
\caption{The coalescence process with $\P_R = \Scr{G}(1)$ and $\P_B = \Scr{F}(3)$ restricted to a finite interval.}
\label{finite}
\end{figure}

It is worth noting that the distributions $\Scr{F}(\lambda)$ and $\Scr{G}(a)$ are stochastically incomparable, i.e., $\Scr{F}(\lambda) \not\succcurlyeq \Scr{G}(a)$ and $\Scr{G}(a) \not\succcurlyeq \Scr{F}(\lambda)$ for all $\lambda > \Lambda$ and $0\le a<1$. The choice of the blue distribution $\Scr{F}(\lambda)$ is fairly natural as we shall see that once we have eliminated very short intervals, the distribution of the lengths of the remaining blue intervals always dominates some exponential distribution. The choice of the red distribution $\Scr{G}(a)$ appears to be a bit more artificial; however, the proof naturally asks for (and applications naturally want) a distribution with a fat tail, and this seems to be the simplest distribution for which our proof works. As an application of Theorem~\ref{t:main}, we establish the following result. 

\begin{theorem}\label{t:toy}
Suppose that all the red intervals are initially of length $1$, and that the initial lengths of the blue intervals are uniformly distributed on the interval $[0,1+\gamma]$. There exist positive constants $\gamma_R = 0.1216$ and $\gamma_B = 6.048$ such that if $0 \le \gamma \le \gamma_R$, then red wins almost surely, and if $\gamma \ge \gamma_B$, then blue wins almost surely.
\end{theorem}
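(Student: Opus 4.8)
The plan is to derive both halves from Theorem~\ref{t:main}, used as stated for the blue-win and with the two colours interchanged for the red-win, after a short preliminary sequence of recolourings that puts the colouring into a product form $\Delta(\cdot,\cdot)$ satisfying the hypotheses of that theorem. Some preprocessing is unavoidable: $\Scr{F}(\lambda)$ and $\Scr{G}(a)$ live on $[1,\infty)$, whereas the blue distribution here is uniform on $[0,1+\gamma]$. Throughout I use Proposition~\ref{p:unique} in the following form: if after some partial sequence of recolourings applied to $\Delta(\P_R,\P_B)$ the colouring has the law of $\Delta(\P_R^{*},\P_B^{*})$ and a colour wins for $\Delta(\P_R^{*},\P_B^{*})$, then that colour wins for $\Delta(\P_R,\P_B)$, since prepending our partial sequence to any complete sequence for $\Delta(\P_R^{*},\P_B^{*})$ yields a complete sequence for $\Delta(\P_R,\P_B)$ and all complete sequences agree.

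Write $p=\gamma/(1+\gamma)$. The preprocessing has two steps. First, every blue interval shorter than $1$ is, and stays, shorter than both of its red neighbours, so recolour all of them: the maximal block $R,B,R,\dots,B,R$ lying between two consecutive blue intervals of length at least $1$ — containing $M$ short blue intervals, $M$ geometric with $\P(M=m)=(1-p)^m p$ — coalesces into a single red interval of length $M+1+\sum_{i=1}^{M}U_i$ with the $U_i$ independent uniform on $[0,1]$, while the surviving blue intervals remain independent uniform on $[1,1+\gamma]$; one checks that the colouring now has the law of $\Delta(L,\mathrm{Unif}[1,1+\gamma])$, where $L$ is an atom of mass $p$ at $1$ together with a light-tailed part on $[2,\infty)$. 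Second, fix $r\in[0,1]$: every red interval of length exactly $1$ is now flanked by strictly longer blue intervals, and we recolour each of them independently with probability $r$. Examining runs of unit red intervals, the colouring acquires the law of $\Delta(\P_R^{(r)},\P_B^{(r)})$, where $\P_R^{(r)}$ is $L$ conditioned off the recoloured unit intervals (an atom of mass $p(1-r)/(1-pr)$ at $1$ plus a rescaled copy of the $[2,\infty)$-part of $L$) and $\P_B^{(r)}$ is the law of $K+\sum_{i=1}^{K+1}B_i$ with the $B_i$ independent uniform on $[1,1+\gamma]$ and $K$ geometric with $\P(K=k)=(pr)^k(1-pr)$.

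For the blue-win I would choose $r\in(0,1)$, $\lambda>\Lambda$ and $a\in[0,1)$ for which $\P_B^{(r)}\succcurlyeq\Scr{F}(\lambda)$ and $\Scr{G}(a)\succcurlyeq\P_R^{(r)}$; Theorem~\ref{t:main} then gives a blue-win for $\Delta(\P_R^{(r)},\P_B^{(r)})$, hence for the original colouring. These two requirements are single-variable tail inequalities. The first, $\P(\P_B^{(r)}\ge x)\ge e^{-(x-1)/\lambda}$ for all $x\ge1$, forces $\lambda$ to be at most roughly $\gamma/(1-pr)$ (from the behaviour near $x=1$) and hence $\gamma$ large; the second, $\P(\P_R^{(r)}\ge x)\le(a+1)^2/(a+x)^2$ for all $x\ge1$, forces the residual mass $(1-p)/(1-pr)$ carried by the $[2,\infty)$-part of $\P_R^{(r)}$ to be small (read off at $x=2$), and hence $r$ to be bounded away from $1$ — indeed $\P_R^{(1)}$ is supported on $[2,\infty)$ and is dominated by no $\Scr{G}(a)$. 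Balancing these over $(r,\lambda,a)$, using $\Lambda<14$, a valid choice exists precisely when $\gamma$ exceeds the asserted $\gamma_B=6.048$; the binding constraint is an inequality at some intermediate value of $x$ that one locates numerically. For the red-win I would instead take $r=1$, so that $\P_R^{(1)}$ is supported on $[2,\infty)$ with a geometric tail of mean $\tfrac{3}{2p}+1$, while $\P_B^{(1)}$ is concentrated near $1$ when $\gamma$ is small; applying Theorem~\ref{t:main} with the colours swapped, red wins as soon as $\P_R^{(1)}\succcurlyeq\Scr{F}(\lambda)$ for some $\lambda>\Lambda$ and $\Scr{G}(a)\succcurlyeq\P_B^{(1)}$ for some $a\in[0,1)$. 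The second is easily arranged by taking $a$ near $1$; the first is the binding condition — already on means it demands $\tfrac{3}{2p}+1\ge 1+\lambda> 1+\Lambda$, so $\gamma$ small — and, carried out at the level of the full tail, it yields the claimed $\gamma_R=0.1216$.

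The crux in both directions is this last step: proving the pointwise stochastic-domination inequalities with the constants as sharp as claimed. Because linear coalescence is highly non-monotone (Claim~\ref{t:counter}), one cannot weaken $\P_R^{(r)},\P_B^{(r)}$ to cruder comparands — the dominations have to be verified exactly, which means analysing the survival function of a compound geometric sum of uniform random variables, identifying the worst value of $x$, and estimating it there; this is where the numbers $\gamma_R=0.1216$ and $\gamma_B=6.048$ are produced, and it is the only genuinely computational part of the argument. The remaining ingredients are routine: that the two preprocessing steps produce colourings of the exact product form $\Delta(\cdot,\cdot)$ holds because the coalesced and the surviving intervals depend on disjoint portions of the underlying sequences of lengths, and that introducing extra independent randomness into the recolouring schedule cannot change the eventual outcome is immediate from Proposition~\ref{p:unique}.
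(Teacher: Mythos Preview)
Your proposal is correct and follows essentially the same route as the paper: preprocess by recolouring all blue intervals shorter than $1$ and then an $r$-fraction of the surviving unit red intervals, compute the resulting product law, and invoke Theorem~\ref{t:main} (with colours swapped for the red-win). The paper likewise takes $r=1$ for the red-win and, for the blue-win, the specific choice $r=1-c/\gamma$ with $c>5/4$ (so that the residual atom at $1$ has mass $c/(c+1)>5/9$, which is what the $\Scr{G}(a)$ bound needs at $x=2$); the verification of the stochastic-domination inequalities is, as you anticipate, the computational heart, and the paper carries it out via Proposition~\ref{c:boundbygeom} for large $x$, a Berry--Esseen normal approximation for intermediate $x$, and exact Irwin--Hall formulae for small $x$, stitched together by a downward induction on $x$.
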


Simulations suggest, as one might expect, that there is a phase transition in the context of Theorem~\ref{t:toy} from a red-win to a blue-win at some critical value $\gamma_c \in [1.16, 1.19]$; however, due to the non-monotone nature of linear coalescence, we are unable to even rule out the possibility that there exist infinitely many values of $\gamma$ in the interval $[\gamma_R, \gamma_B]$ at which the outcome flips!

The rest of this paper is organised as follows. In Section~\ref{s:prelim}, we introduce some notation and prove Proposition~\ref{p:unique}. In Section~\ref{s:bound}, we describe a method which can, in certain cases, be used to deduce the outcome of linear coalescence from the typical outcome of the process on a large finite interval; we then prove Theorem~\ref{t:mean} as well as our high confidence results, Claims~\ref{t:counter} and~\ref{t:trans}, in Section~\ref{s:renorm}. To prove our main result, we develop in Section~\ref{s:strat} a method for tracking the evolution by maintaining a collection of `approximate interval lengths' that is, crucially, stable under stochastic dominance. We then prove Theorem~\ref{t:main} in Section~\ref{s:main} and describe how to deduce Theorem~\ref{t:toy} from Theorem~\ref{t:main} in Section~\ref{s:application}. Finally, we conclude the paper in Section~\ref{s:conc} with a discussion of some open problems. 

The computer programs used to establish the aforementioned high confidence results, and to rigorously verify certain other estimates, are available for download at
\href{https://www.dpmms.cam.ac.uk/~bp338/code/coalescence.zip}{{\ttfamily www.dpmms.cam.ac.uk/$\sim$bp338/code/coalescence.zip}}.

\section{Preliminaries}\label{s:prelim}
In this section, we shall prove a few elementary results about linear coalescence in the case in which we have only finitely many intervals, justify our claims about the outcome of linear coalescence, and finally, establish some notational conventions for the rest of the paper.

By a \emph{coloured-interval}, we mean an interval consisting of a finite number of subintervals coloured alternately red and blue. We shall consider the coalescence process on a coloured-interval; all our terminology in the case of linear coalescence carries over to this setting as well. In what follows, all coloured-intervals that we consider will be \emph{non-degenerate}.

It is clear that any complete sequence of recolourings for linear coalescence on a coloured-interval is of finite length. The following simple fact is, perhaps, not so immediate.

\begin{lemma}\label{l:order}
The number of times a point of a (non-degenerate) coloured-interval changes colour in a complete sequence of recolourings is independent of the complete sequence. In particular, the final state of linear coalescence on a coloured-interval is independent of the order of recolourings.
\end{lemma}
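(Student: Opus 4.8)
The plan is to argue that linear coalescence on a coloured-interval is, in an appropriate sense, an \emph{abstract rewriting system with the diamond property}, so that confluence (and hence uniqueness of the normal form, together with path-length invariance) follows from a standard Newman-type argument. The state space is the finite set of non-degenerate colourings of the given coloured-interval; a single step is a recolouring of some recolourable monochromatic interval $I$ (one surrounded by strictly longer monochromatic intervals of the opposite colour), merging $I$ with its two neighbours. Since each recolouring strictly decreases the number of monochromatic intervals, every sequence of recolourings terminates, so the only thing to establish is \emph{local confluence}: if from a colouring $\Delta$ one can recolour either $I$ or $J$ (with $I \neq J$), then after performing one of these moves one can still reach a common colouring from both resulting states.

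The key structural observation — already flagged in the excerpt — is that if $I$ is recolourable in $\Delta$, then neither neighbour of $I$ is recolourable in $\Delta$, and moreover recolourability of $I$ persists: the two neighbours of $I$ can only grow, so $I$ stays surrounded by strictly longer opposite-coloured intervals until it is itself recoloured. I would use this to split the local-confluence check into two cases according to whether $I$ and $J$ are \emph{far apart} or \emph{close}. If $I$ and $J$ do not share a neighbour (their ``blocks'' of three consecutive intervals are disjoint, or overlap only in a way that does not interfere), the two recolourings commute outright: doing $I$ then $J$ and doing $J$ then $I$ yield literally the same colouring, and one checks that each move remains legal after the other because the relevant neighbours only lengthen. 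If $I$ and $J$ are close — the delicate case — then since $I$'s neighbours are not recolourable, $J$ cannot be a neighbour of $I$; the remaining possibility is that $I$ and $J$ are at distance two, i.e. they share a common neighbour $M$ sitting between them, with $M$ longer than both $I$ and $J$ and of the opposite colour to both. Here one recolours $I$, which absorbs $M$ into a long interval $I'$; now $J$ is adjacent to $I'$, which is even longer than $M$ was, so $J$ is still recolourable, and recolouring $J$ merges $J$ with $I'$ and with $J$'s other neighbour. Symmetrically from the $J$-first side. One checks both orders terminate at the same colouring (the union of $I \cup M \cup J$ together with the outer neighbours of $I$ and $J$, all one colour), which gives the diamond.

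Once local confluence is in hand, Newman's lemma (applicable since the system is terminating) yields global confluence, hence a unique normal form: this is the ``final state is independent of the order'' part. For the stronger claim that each \emph{point} changes colour the same number of times regardless of the complete sequence, I would track, for a fixed point $x$, the quantity $c(\Delta, x) = $ (number of recolourings in a complete sequence from $\Delta$ that flip $x$) and show by induction on the (finite, by termination) length of the longest recolouring sequence that $c$ is well-defined, using that in each of the commuting/diamond cases above the multiset of intervals recoloured, and in particular which recolourings cover $x$, is the same on both sides of the diamond; combined with confluence this propagates to all complete sequences. The main obstacle is the close case of local confluence: one must enumerate the possible relative positions of two simultaneously-recolourable intervals and verify carefully — using non-degeneracy and the persistence-of-recolourability fact — that no configuration arises in which performing one move destroys the legality of the other without opening a compensating move, and that the two orders genuinely meet at the same colouring rather than merely at colourings with a common descendant (which would still suffice for confluence but not immediately for the per-point count). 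I expect this to be a short but fiddly case analysis.
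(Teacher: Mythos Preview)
Your approach is correct and differs from the paper's. The paper argues by reduction to a canonical ordering: it shows that in any complete sequence, the recolouring of the \emph{shortest} recolourable interval $R$ can be moved to the front without altering any per-point count (because $R$'s neighbours cannot be recoloured before $R$ is, so every earlier recolouring remains valid and recolours exactly the same set of points once $R$ has been merged first), and then inducts on the number of intervals. Your route instead establishes the strong diamond property for the rewriting system and appeals to confluence. Both are commutation arguments at heart; the paper's is more self-contained (no Newman's lemma) and exploits the existence of a globally smallest recolourable interval, while yours is more structural. Your case analysis is complete: two simultaneously recolourable intervals are never adjacent, and at distance two the common neighbour $M$ is longer than both, so either order produces the single interval $I_{k-1}\cup I\cup M\cup J\cup I_{k+3}$ in the colour of $M$, with identical per-point flips. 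The induction you sketch for the per-point count---using that in each diamond the two two-step paths recolour the same multiset of points---is routine to complete once you note that a sequence on a coloured-interval is complete if and only if it terminates at a state with no recolourable interval.
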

\begin{proof}
We claim that the number of recolourings of a point in any complete sequence of recolourings is the same as in the sequence of recolourings where one inductively recolours the \emph{shortest recolourable interval}, i.e., the shortest monochromatic interval that is surrounded by longer monochromatic intervals on both sides, at each step.

If there is no recolourable interval to begin with, then there is nothing to prove. So suppose that $R$ is the shortest (say red) recolourable interval in the original configuration, and that it is surrounded by the longer (blue) intervals $B_-$ and $B_+$. Then in any complete sequence of recolourings, it is impossible to recolour either $B_-$ or $B_+$ before recolouring $R$. Thus, $R$ is always surrounded by longer intervals up until the time it is (necessarily) recoloured in the given complete sequence, say at step $t$. Compare this with the sequence where we recolour $R$ first, but otherwise keep the order of recolourings the same. Any red interval that gets recoloured and was surrounded by an interval containing $B_-$ or $B_+$ in the given complete sequence is now surrounded by an even longer blue interval containing $B_-$,  $R$ and $B_+$, so the recolouring is still valid and still recolours exactly the same set of points. Finally after step $t$ in the new sequence of recolourings, recolouring $R$ is unnecessary and we find that we are in exactly the same state as in the given complete sequence after step $t$, with each point recoloured the same number of times. Hence, we may assume that $R$ is recoloured first. Applying induction on the number of intervals, we see that the order of recolourings from the next step onwards is irrelevant when we recolour $R$ first. Thus, we may as well always recolour in the order of shortest recolourable interval first. As this ordering is uniquely defined (for any non-degenerate coloured-interval), it follows that the number of times each point is recoloured is independent of the complete sequence of recolourings. 

The final state of the coalescence process on the coloured-interval depends only on the number of times each point is recoloured, so it is also independent of the order of recolourings.
\end{proof}

Lemma~\ref{l:order} allows us to uniquely define the final state $[C]$ of linear coalescence on a coloured-interval $C$. We call $[C]$ the \emph{closure} of $C$, and say that $C$ is \emph{closed} if $C=[C]$. Note that $C$ is closed if and only if the lengths of its monochromatic subintervals form a sequence with no local minimum, i.e., if this sequence of lengths consists of a (possibly trivial) increasing subsequence followed by a (possibly trivial) decreasing subsequence.

Denote by $C_1+C_2$ the concatenation of the coloured-intervals $C_1$ and $C_2$; if $C_1$ ends and $C_2$ starts with intervals of the same colour then we merge these into a single monochromatic interval in $C_1 + C_2$. As a consequence of Lemma~\ref{l:order}, we see that $[[C_1]+[C_2]]=[C_1+C_2]$ for any two coloured-intervals $C_1$ and $C_2$.

For $x\in C$, write $N_x(C)$ for the number of times the point $x$ is recoloured in any complete sequence of recolourings of the coloured-interval $C$. We first note that this is well-defined in the light of Lemma~\ref{l:order}. Moreover, we note that 
\begin{equation}\label{eq:mono}
N_x(C_-+C+C_+)\ge N_x(C)
\end{equation}
for any pair of coloured-intervals $C_-$ and $C_+$; indeed, in the coalescence process on $C_-+C+C_+$, we can always start by performing a complete sequence of recolourings inside $C$ first.

Consider a colouring $\Delta$ of the real line into monochromatic intervals given by
\[\dots +R_{-1}+B_{-1}+R_0+B_0+R_1+B_1+\dots,\]
where $R_i$ is a red interval and $B_i$ is a blue interval for each $i\in \Z$. It follows from~\eqref{eq:mono} that the number $N_x(R_{-n}+B_{-n}+\dots+B_n)$ of times a point $x\in\R$ is recoloured in the coalescence process restricted to $R_{-n}+B_{-n}+\dots+B_n$ is monotone in $n$, so we can define its limit $N_x(\Delta) \in\N\cup\{\infty\}$.

We now establish Proposition~\ref{p:unique} by proving the following lemma that establishes a little bit more than the proposition; recall that a sequence of recolourings is complete if whenever there is a monochromatic interval at some time $t$ that can be recoloured, then it is recoloured at some time $T \ge t$.

\begin{lemma}\label{l:unique}
Let $\Delta$ be any (non-degenerate) colouring of the real line into intervals. Under any complete sequence of recolourings applied to $\Delta$, each point $x \in \R$ is recoloured exactly $N_x = N_x(\Delta)$ times. Also, if $\Delta = \Delta(\P_R, \P_B)$ for some pair of probability distributions $\P_R$ and $\P_B$ (at least one of which is non-atomic), then under any complete sequence of recolourings applied to $\Delta$, either 
	\begin{enumerate}
	\item all points are eventually red almost surely, or 
	\item all points are eventually blue almost surely, or
	\item $N_x=\infty$ for all $x\in\R$ almost surely;
\end{enumerate}
furthermore, the outcome is independent of the choice of complete sequence of recolourings.
\end{lemma}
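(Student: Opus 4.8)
The plan is to prove the two assertions in turn: first that any two complete sequences recolour each point equally often, and then, for $\Delta(\P_R,\P_B)$, the trichotomy.

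\emph{Part 1: every complete sequence recolours $x$ exactly $N_x(\Delta)$ times.} Fix a complete sequence $\sigma$ and a point $x$, and write $W_n := R_{-n}+B_{-n}+\dots+B_n$, so that $N_x(\Delta)=\sup_n N_x(W_n)$ by \eqref{eq:mono}. For the upper bound, suppose $\sigma$ recolours $x$ at least $k$ times, the $k$-th recolouring occurring at step $t$. The merged intervals of the first $t$ recolourings are finitely many bounded unions of original intervals, hence all lie inside some window $W_m$; so the first $t$ steps of $\sigma$ constitute a (possibly incomplete) sequence of recolourings of the coloured-interval $W_m$ in which $x$ is recoloured $k$ times. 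Extending it to a complete sequence for $W_m$ and applying Lemma~\ref{l:order} gives $k\le N_x(W_m)\le N_x(\Delta)$. For the lower bound $N_x(\sigma)\ge N_x(W_n)$, I would adapt the exchange argument of Lemma~\ref{l:order}: the shortest recolourable interval $I$ of $W_n$ is not a boundary interval of $W_n$, so its neighbours in $\Delta$ are the same (longer) intervals as in $W_n$; thus $I$ is recolourable in $\Delta$ and, since neighbouring intervals only ever grow, it stays recolourable until the complete sequence $\sigma$ recolours it. Deleting that recolouring from $\sigma$ and pre-recolouring $I$ yields a complete sequence for the resulting configuration that recolours every point as often as $\sigma$ did, bar one fewer recolouring at the points of $I$; iterating this through a shortest-recolourable-first complete sequence for $W_n$ gives $N_x(\sigma)\ge N_x(W_n)$. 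Combining the bounds, $N_x(\sigma)=N_x(\Delta)$, so $N_x=N_x(\Delta)$ is well defined, independently of $\sigma$.

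\emph{Part 2, setup.} Now take $\Delta=\Delta(\P_R,\P_B)$. By Part~1 the eventual colour of each point is a measurable function of the i.i.d.\ interval lengths $(\CR_i),(\CB_i)$, independent of $\sigma$; in particular the three events in the statement do not depend on $\sigma$. The colouring is a factor of these i.i.d.\ sequences, hence stationary and ergodic under the index-shift $i\mapsto i+1$ (which acts spatially as a shift of $\R$), so each of these three shift-invariant events has probability $0$ or $1$; moreover they are pairwise disjoint. It therefore remains to show that they cover the whole space, and for this it suffices to prove that \emph{almost surely no boundary-point is permanent} (never absorbed): granting this, if $N_x<\infty$ for some $x$ then the monochromatic interval containing $x$ is eventually of a fixed colour and has monotone endpoints, and a bounded endpoint would eventually stabilise and hence be permanent, so both endpoints escape to infinity and every point is eventually of that colour --- a red- or blue-win; otherwise $N_x=\infty$ for all $x$.

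\emph{Part 2, the key step: no permanent boundary-points.} Suppose not; then by ergodicity permanent boundary-points exist almost surely, and, being a shift-invariant locally finite set, they form a stationary point process of positive intensity, unbounded in both directions. The colouring strictly between two consecutive permanent boundary-points evolves as on a finite coloured-interval (its end-intervals can never become recolourable, or a permanent boundary-point would be absorbed), so it reaches a final state after finitely many steps; hence the whole line is eventually \emph{frozen}. In the frozen colouring no monochromatic interval is recolourable --- such an interval would get recoloured, contradicting frozenness, or its recolouring would absorb a permanent boundary-point --- so the bi-infinite sequence of interval-lengths has no local minimum, i.e.\ it is weakly increasing and then weakly decreasing. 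A stationary random colouring of $\R$ cannot have this property: passing to $f(y)=$ (length of the frozen interval at $y$), the limits $c_\pm=\lim_{y\to\pm\infty}f(y)$ exist by unimodality and, being shift-invariant, are almost surely constant; stationarity forces $c_\pm<\infty$; and then either the profile is genuinely monotone on one side, so that a suitable level set of $f$ singles out a point of $\R$, contradicting shift-invariance, or $f$ is constant, which has probability $0$ since one of $\P_R,\P_B$ is non-atomic (so not all frozen intervals of that colour can have the common length $c_\pm$). This contradiction establishes the claim.

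\emph{Main obstacle.} The delicate step is the last one. The combinatorial reduction of the trichotomy to ``no permanent boundary-point'' and thence to ``the eventual colouring has no local minimum'' is essentially bookkeeping, but turning the assertion ``a stationary colouring of the line cannot almost surely be unimodal'' into a clean contradiction requires the ergodic-theoretic argument above together with some care in the degenerate constant case; the exchange step in Part~1 also needs the routine but slightly fiddly verification that removing a single recolouring from a complete sequence leaves a complete sequence.
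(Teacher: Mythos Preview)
Your Part~1 is essentially the paper's argument: the upper bound comes from restricting a finite prefix of $\sigma$ to a window $W_m$, and the lower bound comes from the same exchange trick (moving the shortest recolourable interval to the front) used in Lemma~\ref{l:order}. The paper phrases the iteration slightly differently---it rearranges only a finite prefix of $\sigma$ to match an initial segment of a canonical sequence, thereby sidestepping the question of whether the tail remains complete---but the content is the same.

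Your Part~2 takes a genuinely different route. The paper, having reached a strictly unimodal limiting colouring $(J_i)$, rules out the bi-infinite case by a \emph{constructive density argument}: in at least one direction the lengths $|J_i|$ are bounded by some $L$, but there is a positive density of indices $i$ at which a short block $R_i+B_i+\dots+B_{i+k}$ of the original i.i.d.\ colouring coalesces into a single interval of length $\ge L$; the two-half-line case is then excluded by a separate symmetry/ergodicity step. You instead rule out any bi-infinite unimodal profile in one stroke by an abstract \emph{distinguished-point argument}: a strictly unimodal bi-infinite sequence has either a unique peak or a first crossing of a fixed level, and no ergodic shift-invariant process can support a single marked index. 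This is cleaner and handles the two-half-line case automatically (positive density of permanent boundary-points forces infinitely many), at the cost of being less elementary.

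Two points in your Part~2 need tightening. First, ``stationarity forces $c_\pm<\infty$'' is asserted but not argued; one way to justify it is that if $c_+=\infty$ then $f(p_j^+)\to\infty$ almost surely, while the law of $f(p_j^+)$ is independent of $j$, forcing $f(p_0^+)=\infty$ a.s.\ and contradicting the positive density of permanent boundary-points. Second, your ``$f$ constant'' branch is spurious and your non-atomicity justification for it is not correct as stated: non-degeneracy of $\Delta$ already forces any two adjacent frozen intervals (whose endpoints lie among the original $p_j$) to have distinct lengths, so the sequence $(\ell_i)$ is \emph{strictly} unimodal and the constant case cannot occur.
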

\begin{proof}
We first show that each point $x \in \R$ is recoloured exactly $N_x$ times in any complete sequence of recolourings. Note that the number of times a point $x \in \R$ can be recoloured in any finite sequence of recolourings is at most $N_x$: this is obviously true if $N_x = \infty$; if $N_x < \infty$ and $x$ is recoloured $N_x+1$ times in some finite sequence of recolourings, then such a recolouring would be valid when restricted to some coloured-interval $C = R_{-n}+\dots+B_n$, contradicting the fact that $N_x(R_{-n}+\dots+B_n)\le N_x$. Furthermore, there is a complete sequence of recolourings which does recolour every point exactly $N_x$ times; indeed, consider a complete sequence which recolours $R_{-1}+\dots+B_{1}$, then extend this to a complete sequence recolouring $R_{-2}+\dots+B_{2}$, and so on. 

Now, suppose that a given complete sequence recolours intervals in the order $I_1, I_2,\dots$, and that the above sequence recolours intervals in the order $I^*_1, I^*_2,\dots$; we shall show that for any $t$, there is an $n_t$ such that it is possible to change, without altering the number of times any point is recoloured, the sequence of recolourings $I_1,\dots,I_{n_t}$ to a new sequence of recolourings $I'_1,\dots,I'_{n_t}$ with $I'_i= I^*_i$ for $i\le t$. Indeed, by induction it is enough to do this for $t=1$. As $I^*_1$ is surrounded by longer intervals to begin with, this interval must eventually appear as some $I_{n_1}$. As in the proof of Lemma~\ref{l:order}, we may just define $I'_1= I^*_1$ and $I'_{i+1}=I_i$ for $i<n_1$.

Next, given $x\in\R$, for each natural number $n \le N_x$, we can choose a sufficiently large $t$ so that $x$ is recoloured $n$ times via the sequence $I^*_1,\dots, I^*_t$. Then $x$ is recoloured at least $n$ times by $I_1,\dots,I_{n_t}$. As $x$ is eventually recoloured $N_x$ times by the sequence $I^*_1, I^*_2, \dots$, it must be recoloured at least $N_x$ times by the sequence $I_1, I_2,\dots$, establishing our first claim.

If $N_x=\infty$ for some $x\in R_i\cup B_i$, say, then after $x$ is recoloured $n$ times, each point of $R_j\cup B_j$  is recoloured at least $n-2|i-j|$ times; indeed, if $n \ge 2|i-j|$, then the points in $R_j\cup B_j$ must lie in the same monochromatic interval as the points in $R_i \cup B_i$ after $2|i-j|$ recolourings of the points in $R_i \cup B_i$. Consequently, either $N_x = \infty$ for all $x \in \R$, or $N_x < \infty$ for all $x \in \R$. If the latter conclusion holds, then every point of the real line subsequently has a well-defined final colour. Moreover the final colouring partitions $\R$ into a sequence $(J_i)_{i\in S}$ of (finite or infinite) intervals of alternating colours, and as no further recolouring is possible, the lengths of these intervals must form a strictly unimodal sequence. 

It is impossible for the sequence $(J_i)_{i\in S}$ to be finite and consist of more than two intervals as the two end-intervals would have infinite length. We now claim that it is almost surely impossible for this sequence to be infinite in the case where $\Delta = \Delta(\P_R, \P_B)$. Indeed, if the sequence is infinite in one direction only, say $J_0,J_1,\dots$, then as $|J_0|=\infty$, it must be the case that the sequence $(|J_i|)_{i \ge 1}$ is decreasing. Similarly, if $(J_i)_{i\in S}$ is a two-way infinite sequence, then the lengths of these intervals must decrease in at least one direction. Thus, in either case, all the $J_i$ must have a bounded length, say $|J_i|<L$, from some point onwards. This cannot happen in the case where $\Delta = \Delta(\P_R, \P_B)$, because with probability $1$, there is a $k>0$ and a positive density of indices $i$ such that the sequence $R_i + B_i + R_{i+1} + \dots + B_{i+k}$ in the original colouring coalesces into a single interval of length at least $L$. For example, without loss of generality, there is an $L'$ such that with positive probability, an original red interval has length less than $L'$ and an original blue interval has length at least $L'$. A sufficiently long alternating sequence of such intervals will coalesce into a blue interval of length at least $L$, and such configurations will occur with positive density. 

Hence, we conclude from the discussion above that if $\Delta = \Delta(\P_R, \P_B)$, then the sequence $(J_i)_{i\in S}$ either consists of a single monochromatic interval, or an infinite red interval followed by an infinite blue interval, or an infinite blue interval followed by an infinite red interval. Recall that $\Delta(\P_R, \P_B)$ is constructed as follows: we take two  i.i.d.\ sequences $(\CR_i)_{i \in \Z}$ and $(\CB_i)_{i\in\Z}$ of random variables with distributions $\P_R$ and $\P_B$ respectively, and then define $\Delta(\P_R, \P_B)$ to be the colouring of the real line given by
\[\dots +R_{-1}+B_{-1}+R_0+B_0+R_1+B_1+\dots,\]
where $R_i$ is a red interval with $|R_i|= \CR_i$ and $B_i$ is a blue interval with $|B_i| = \CB_i$ for each $i\in \Z$, and the origin is the boundary-point between $R_0$ and $B_0$. Now, the law of $\Delta(\P_R, \P_B)$ is \emph{shift-invariant} under integer-shifts of the form $(\CR_i)_{i \in \Z} \to (\CR_{i+j})_{i \in \Z}$ and $(\CB_i)_{i\in\Z} \to (\CB_{i+j})_{i \in \Z}$  for any $j\in\Z$. Furthermore, the events `$N_x=\infty$ for all $x\in \R$', `every point is eventually red', `every point is eventually blue', `there is a $y\in \R$ such that every $x \le y$ is eventually red and every $x>y$ is eventually blue', and `there is a $y\in \R$ such that every $x \le y$ is eventually blue and every $x>y$ is eventually red' are all shift-invariant events that are measurable with respect to the $\sigma$-algebra generated by finite subsets of  the random variables $(\CR_i)_{i \in \Z}$ and $(\CB_i)_{i\in\Z}$. Hence, by ergodicity, they all occur with probability either $0$ or 1. By symmetry, the latter two events have the same probability which must necessarily be 0. 

As the number of times each point is recoloured is independent of the choice of complete sequence of recolourings, each of the three possible outcomes (namely a red-win, a blue-win, and a tie) is also independent of the choice of complete sequence of recolourings.
\end{proof}

We close this section by establishing a few notational conveniences. For a probability distribution $\Scr{D}$ and a non-negative integer $k \in \Z$, we write $k \circ \Scr{D}$ for the distribution of the sum $\sum_{i=1}^k X_i$, where $X_1, X_2, \dots, X_k$ are independent random variables with distribution $\Scr{D}$; more generally, given a distribution $\Scr{Z}$ on the non-negative integers, we write $\Scr{Z} \circ \Scr{D}$ for the distribution of the sum of $\sum_{i=1}^Z X_i$ where $Z$ is a random variable with distribution $\Scr{Z}$ and $X_1, X_2, \dots, X_Z$ are independent random variables with distribution $\Scr{D}$, all of which are independent of $Z$. For a distribution $\Scr{D}$ and $L\in \R$, we say that $X$ has distribution $L + \Scr{D}$ if the distribution of $X-L$ is $\Scr{D}$. Finally, we shall write 
\begin{enumerate}
\item $\Exp(\lambda)$ for the (exponential) distribution of a non-negative random variable $X$ such that $\P(X \ge x) = \exp(-x/\lambda)$ for each $x \ge 0$, 
\item $\Po(\lambda)$ for the (Poisson) distribution of a random variable $X$ supported on the non-negative integers with $\P (X = k) = \lambda^k e^{-\lambda}/k!$ for each integer $k \ge 0$,
\item $\Geom(p)$ for the (geometric) distribution of a random variable $X$ supported on the positive integers with $\P (X = k) = p^{k-1} (1-p)$ for each integer $k \ge 1$, and
\item $U[a,b]$ for the (uniform) distribution of a random variable distributed uniformly on the interval $[a,b]$.
\end{enumerate}

\section{The renormalisation argument}\label{s:bound}
We now describe a strategy, which we call \emph{the renormalisation argument}, to deduce the outcome of linear coalescence from the typical outcome of the coalescence process on a large finite interval. 

Let $\B{R} \subset \R \times \R \times \N$ denote the set of triples $( \alpha, \beta, k)$ with $0<\alpha \le 1/4$ and $\beta > 1$ such that
\begin{align}
 \beta+\alpha\beta&<2-3\alpha, \mbox{ and}\label{e:b1}\\
 \alpha(k-3)&>2\beta(1-\alpha).\label{e:b2}
\end{align}
The set $\B{R}$ is nonempty; we can check, for example, that $(1/5, 10/9, 12) \in\B{R}$. We call a triple of $\B{R}$ \emph{renormalisable}. First, note that for any renormalisable triple, we have $k > 6$. Also, note that~\eqref{e:b1} is equivalent to
\[
\frac{5}{3+\beta} > 1+\alpha,
\]
so the bound on $\alpha$ falls from $1/4$ to $0$ as $\beta$ goes from $1$ to $2$. For any $\alpha$ and $\beta$ satisfying this inequality, there is always a (large) $k$ for which~\eqref{e:b2} holds.

Given a triple $\B{r} = (\alpha, \beta, k) \in \B{R}$, we say that a coloured-interval $C$ is \emph{$\B{r}$-good} if its closure $[C]$ contains a (unique) \emph{central} blue interval that is within distance $\alpha|C|$ of each end of $[C]$, and in particular has length at least $(1-2\alpha)|C|$; otherwise, we say that $C$ is \emph{$\B{r}$-bad}. Next, given $L >0$, we say that $C$ is \emph{$(\B{r}, L)$-typical} if
\begin{equation}\label{e:b3}
 L<|C|<\beta L.
\end{equation}

When $\B{r}$ and $L$ are clear from the context, we abbreviate $\B{r}$-good, $\B{r}$-bad and $(\B{r}, L)$-typical to good, bad and typical respectively. We start with the following lemma.

\begin{lemma}\label{l:renorm}
Let $\B{r} = (\alpha, \beta, k)$ be renormalisable and let $L >0$. For any sequence $C_1, C_2, \dots, C_k$ of $(\B{r}, L)$-typical coloured-intervals  with the property that no pair of these coloured-intervals which are at most two apart in the sequence are both $\B{r}$-bad, the coloured-interval $C_1+C_2 + \dots+C_k$ is $\B{r}$-good. Furthermore, if $C_i$ is $\B{r}$-good, then the central blue subinterval of $[C_i]$ is never recoloured in the coalescence process on the coloured-interval $[C_1]+[C_2] + \dots+[C_k]$.
\end{lemma}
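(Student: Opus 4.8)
Since $[C_1+\dots+C_k]=[[C_1]+\dots+[C_k]]$, both assertions concern the coalescence process on the single coloured-interval $D:=[C_1]+\dots+[C_k]$, and by Lemma~\ref{l:order} it is enough to exhibit \emph{one} complete sequence of recolourings of $D$ that never recolours the central blue subinterval $Z_i$ of any $\mathbf{r}$-good $C_i$ and whose outcome is $\mathbf{r}$-good. I would first record the structure: because no two of the $C_i$ within distance two are both bad, the bad indices are $\ge 3$ apart, so the maximal runs of consecutive good intervals in $D$ are separated by single bad intervals, and a single-good run can occur only at one of the two ends; for an $\mathbf{r}$-good $C_i$ one has $[C_i]=T_i^-+Z_i+T_i^+$ with $|Z_i|>(1-2\alpha)|C_i|>(1-2\alpha)L$ and (this will matter) $|T_i^\pm|<\alpha|C_i|$, while a bad $[C_i]$ is unimodal of length $<\beta L$.

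The first stage collapses the good runs. Between two consecutive central blues in a run sits the coloured-interval $T_i^++T_{i+1}^-$, of length $<2\alpha\beta L$, which is smaller than both $Z_i$ and $Z_{i+1}$ because $2\alpha\beta<1-2\alpha$ (this and the numerical facts below follow from \eqref{e:b1} together with $\beta>1$ and $\alpha<\tfrac14$); recolouring its subintervals from the outside in absorbs it into $Z_i\cup Z_{i+1}$ with no $Z_j$ ever surrounded by longer intervals. A run $C_a,\dots,C_b$ thus collapses to a single blue interval $W$ with $|W|=\sum_{i=a}^{b}|C_i|-|T_a^-|-|T_b^+|$, and $D$ becomes a concatenation of such blocks $W$ separated by ``gaps'', each internal gap having the form $T^++[\text{bad}]+T^-$ and the two end gaps being a tail, or a bad interval together with a tail.

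In the second stage I would merge the blocks. The key inequality is that a run with at least two good intervals has $|W|>|\Gamma|$ for each adjacent internal gap $\Gamma$: for a two-interval run and its left gap $\Gamma=T^+_{\mathrm{prev}}+[\text{bad}]+T_a^-$, writing out $|W|-|\Gamma|$ and bounding $|T^\pm|<\alpha|C|$ reduces this to exactly $(1+\alpha)\beta<2-3\alpha$, i.e.\ \eqref{e:b1} (and longer runs are even safer). Since bad indices are $\ge 3$ apart, every internal run, and every end run that is not single-good, has $\ge 2$ goods, so every internal gap flanked by two such runs is strictly shorter than each of its flanking blocks. I would then invoke the mechanism: if a coloured-interval $\Gamma$ (whose subintervals have length $\le|\Gamma|$ at every time) lies between blue intervals $V,V'$ with $\min(|V|,|V'|)>|\Gamma|$, then $V+\Gamma+V'$ closes to a single blue interval with $V,V'$ never recoloured --- one closes $\Gamma$ and recolours its reds from the outside inward, each shorter than the growing blue on one side and, by unimodality, than its other neighbour, while $V,V'$ stay longer than everything inside $\Gamma$. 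Applying this to every internal gap flanked by two $(\ge2)$-good runs merges all such runs, and the bad intervals between them, into one long blue interval $B$; only red subintervals of gaps are recoloured, so no $Z_i$ is flipped. A gap adjacent to a single-good end run (at most one at each end) may fail to be dominated from the single-good side, and is simply left in whatever closed state it reaches between $Z_1$ (say) and $B$.

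Finally, we have $[D]=(\text{left stuff})+B+(\text{right stuff})$ with $B$ blue, where the left stuff is a bad interval plus a tail (length $<(1+\alpha)\beta L$) if $C_1$ is bad, just $T_1^-$ if $C_1$ lies in a $(\ge2)$-good run, and $T_1^-+Z_1+[\Gamma_1]$ (length at most $|C_1|+|C_{\text{bad}}|+\alpha|C_3|$) if $C_1$ is a single-good end run. In each case this is $<\alpha|D|$: in the last (worst) case because making a gap long forces the nearby $|C_i|$ to be close to $\beta L$, whence $(1-\alpha)(|C_1|+|C_{\text{bad}}|)<2\beta(1-\alpha)L<\alpha(k-3)L$ by \eqref{e:b2} while $|D|>kL$; the other cases are easier consequences of \eqref{e:b2}. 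Symmetrically on the right. Hence $B$ is a blue interval within distance $\alpha|D|$ of each end of $[D]$, so $C_1+\dots+C_k$ is $\mathbf{r}$-good (uniqueness of the central blue being automatic), and since this complete sequence recoloured no $Z_i$, Lemma~\ref{l:order} yields the ``Furthermore''.

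The only places the precise definition of $\mathbf{R}$ enters --- and the main obstacle --- are the two quantitative inputs: that $|W|>|\Gamma|$ for every run with $\ge 2$ goods, which is \eqref{e:b1} provided one is careful to bound tails by $\alpha|C|$ rather than by $\alpha\beta L$; and that an unresolved end gap still contributes less than $\alpha|D|$, which is \eqref{e:b2}. Getting this bookkeeping right, and the auxiliary estimate $2\alpha\beta<1-2\alpha$ of the first stage, is where the care lies; the remainder is routine shortest-recolourable-first manipulation.
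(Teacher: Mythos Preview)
Your proof is correct and takes essentially the same approach as the paper: merge the central blues of adjacent good $C_i$'s, swallow each internal bad $C_i$ between its two flanking pairs of goods (both steps driven by \eqref{e:b1}), and bound the leftover at each end by $L_1+L_2+\alpha L_3\le\alpha\sum L_i$ via \eqref{e:b2}. The only notable difference is in the ``Furthermore'': you exhibit one complete sequence that never flips any $Z_i$ and then invoke Lemma~\ref{l:order}, whereas the paper argues directly that in \emph{any} complete sequence the first central blue $Z_i$ to be recoloured would force both $C_{i-1}$ and $C_{i+1}$ to be bad (or nonexistent, in which case the end-tail is already shorter than $Z_i$), contradicting the hypothesis.
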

\begin{proof}
By replacing each $C_i$ by its closure $[C_i]$ if necessary, we may assume that each $C_i$ is closed. In the following, we write $L_i=|C_i|$ for each $1 \le i \le k$. We make three simple observations.

First, if there are two good coloured-intervals adjacent to one another, say $C_i$ and $C_{i+1}$, then the two central blue intervals of $C_i$ and $C_{i+1}$ are longer than the gap between them in $C_i+C_{i+1}$: indeed, assuming $L_i\le L_{i+1}$, the smallest long blue interval is of length at at least $(1-2\alpha)L_i$ whereas the gap between the two long blue intervals is at most
\[
 \alpha L_i+\alpha L_{i+1}<(\alpha+\alpha\beta)L_i<(2-2\alpha-\beta)L_i
 \le(1-2\alpha)L_i,
\]
using~\eqref{e:b3},~\eqref{e:b1}, and the fact that $\beta\ge1$. Hence, the two central blue intervals of $C_i$ and $C_{i+1}$ eventually coalesce into a single blue interval in $[C_i+C_{i+1}]$. Note that $[C_i+C_{i+1}]$ now contains a blue interval of length at least $(1-\alpha)(L_i+L_{i+1})$.

Next, if a coloured-interval $C_i$ is bad for some $3 \le i \le k-2$, then note that $C_{i-2}$, $C_{i-1}$, $C_{i+1}$ and $C_{i+2}$ must, by assumption, be good. We claim that $[C_{i-2} + \dots +C_{i+2}]$ contains a single blue interval containing both $C_i$ and the central blue intervals of $C_{i-2}$, $C_{i-1}$, $C_{i+1}$ and $C_{i+2}$. Indeed, in this case, we have two long blue intervals of length at least $(1-\alpha)(L_{i-2}+L_{i-1})$ and $(1-\alpha)(L_{i+1}+L_{i+2})$ in $[C_{i-2}+C_{i-1}]$ and $[C_{i+1}+C_{i+2}]$ respectively, formed in each case by coalescing the central blue intervals of these good coloured-intervals. The gap between these two long blue intervals, assuming $L_{i-1}+L_{i-2} \le L_{i+1}+L_{i+2}$, is at most
\begin{align*}
 \alpha L_{i-1}+L_i+\alpha L_{i+1}
 &\le \alpha L_{i-1}+(1+\alpha)\beta\min(L_{i-2},L_{i-1})\\
 &<\alpha L_{i-1}+(2-3\alpha)\min(L_{i-2},L_{i-1})\\
 &\le (1-\alpha)(L_{i-1}+L_{i-2}),
\end{align*}
which demonstrates our claim.

Finally, no central blue interval of a good coloured-interval can ever be recoloured in any complete sequence of recolourings for $C_1 + C_2 + \dots + C_k$. Indeed, suppose not and that the first such central blue interval to be recoloured is that of $C_i$. This, in conjunction with our initial observation about two adjacent good coloured-intervals, would imply that both $C_{i-1}$ and $C_{i+1}$ exist and are bad, contradicting the assumptions of the lemma.

By repeatedly applying these observations, we can show by induction that all the central blue intervals from the good $C_i$, except possibly the ones from $C_1$ or $C_k$ when $C_2$ or $C_{k-1}$ are respectively bad, coalesce into a single blue interval in $[C_1 + C_2 + \dots +C_k]$. The furthest this long blue interval can be from, say, the beginning of $C_1+ C_2 +\dots+C_k$ is at most $L_1+L_2+\alpha L_3$. It is therefore enough to show that
\[
 L_1+L_2+\alpha L_3\le\alpha(L_1+L_2 +\dots+L_k).
\]
Simplifying, we see that it is enough to show that $(1-\alpha)(L_1+L_2)\le\alpha(L_4+L_5+\dots+L_k)$, which follows from~\eqref{e:b2} and the fact that $L_1,L_2\le \beta L_i$ for $i\ge 4$.
\end{proof}

Given distributions $\P_R$ and $\P_B$, for a renormalisable triple $\B{r}\in \B{R}$ and a natural number $n \in \N$, we write $q(n, \B{r})$ for the probability that a coloured-interval $C$ which is the concatenation of $2n$ alternately red and blue intervals whose lengths are independent and have distributions $\P_R$ and $\P_B$ respectively is $\B{r}$-bad. 

\begin{theorem}\label{t:renorm}
Fix a renormalisable triple $\B{r} = (\alpha, \beta, k)$ and a positive integer $n\in \N$. For a pair of probability distributions $\P_R$ and $\P_B$, if there exist sequences $(\eta_t)_{t\ge 0}$ and $(L_t)_{t\ge 0}$ such that the concatenation of $2k^tn$ alternately red and blue intervals, whose lengths are independent and have distributions $\P_R$ and $\P_B$ respectively, is $(\B{r}, L_t)$-typical with probability at least $1-\eta_t$, then blue wins almost surely if $\sum_{t \ge 0} q_t$ converges, where the sequence $(q_t)_{t \ge 0}$ is defined by $q_0=q(n, \B{r})$ and $q_{t+1}=(2k-3)q_t^2+k\eta_t$.
\end{theorem}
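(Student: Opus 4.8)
The plan is to iterate Lemma~\ref{l:renorm} across scales $L_0, L_1, L_2, \dots$ and use a Borel--Cantelli-type summability argument to conclude that, almost surely, arbitrarily long blue intervals persist forever, which by Lemma~\ref{l:unique} forces a blue-win. Set up a renormalisation hierarchy: at level $t$, tile the real line (up to the allowed atypical error) into consecutive \emph{blocks}, where a level-$0$ block is a concatenation of $2n$ alternating red/blue intervals with the given distributions, and a level-$(t{+}1)$ block is a concatenation of $k$ consecutive level-$t$ blocks. Say a level-$t$ block is \emph{$t$-bad} if it fails to be $\B{r}$-good, and \emph{$t$-atypical} if it fails to be $(\B{r},L_t)$-typical; write $p_t$ for the probability that a fixed level-$t$ block is $t$-bad. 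The base case is $p_0 = q(n,\B{r}) = q_0$.

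The inductive step is where Lemma~\ref{l:renorm} does the work. Condition a level-$(t{+}1)$ block $B = C_1 + \dots + C_k$ on the event that every $C_i$ is $(\B{r},L_t)$-typical — this fails with probability at most $k\eta_t$ by a union bound, using the hypothesis that a level-$t$ block is $(\B{r},L_t)$-typical with probability $\ge 1-\eta_t$. On this event, Lemma~\ref{l:renorm} says $B$ is $\B{r}$-good \emph{unless} some two of the $C_i$ at most two apart in the sequence are both $\B{r}$-bad. For each of the $2k-3$ unordered pairs $\{i,j\}$ with $1 \le i \le j \le k$ and $j - i \le 2$ and $i \ne j$ (there are $(k-1)+(k-2) = 2k-3$ such pairs), the probability that both $C_i$ and $C_j$ are $t$-bad is at most $p_t^2$, since distinct level-$t$ blocks are built from disjoint, hence independent, collections of interval lengths. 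A union bound gives $p_{t+1} \le (2k-3)p_t^2 + k\eta_t = q_{t+1}$, which is exactly the recursion in the statement; so by induction $p_t \le q_t$ for all $t$.

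Now transfer good blocks into surviving blue intervals. For a level-$t$ block $C = C_1 + \dots + C_k$ that is $t$-good, Lemma~\ref{l:renorm} also tells us the central blue interval of $[C]$ is never recoloured in the coalescence process on $[C_1] + \dots + [C_k]$; combining this across scales with the monotonicity relation~\eqref{eq:mono} (a point recoloured inside a sub-coloured-interval is recoloured at least as often in any larger one, so conversely a blue interval protected inside a block stays protected inside the whole line), we get that if the level-$t$ block containing the origin-block is $t$-good, then it contributes a blue interval of length at least $(1-2\alpha)L_t \to \infty$ that survives forever in the coalescence on all of $\R$. Since $\sum_t q_t < \infty$ by hypothesis and $p_t \le q_t$, Borel--Cantelli gives that almost surely only finitely many of the level-$t$ blocks containing the origin are $t$-bad; hence infinitely many are good, so arbitrarily long blue intervals survive forever at the origin (and, by shift-invariance, everywhere). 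Thus $N_x < \infty$ cannot hold with a finite final colouring having all intervals of bounded length, and by Lemma~\ref{l:unique} the only ergodic outcomes left are a blue-win — which is what we wanted. (One should take a little care nesting the blocks consistently so that the level-$t$ block containing a fixed point is genuinely a concatenation of $k$ level-$(t{-}1)$ blocks for every $t$; this is a routine bookkeeping matter, handled by fixing the tiling of $\Z$ into consecutive length-$k^t n$ windows aligned at the origin.)

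The main obstacle is the last transfer step: Lemma~\ref{l:renorm} is stated for a \emph{finite} concatenation $[C_1] + \dots + [C_k]$, and one must argue that the protection of a central blue interval is not destroyed when the block is embedded in the bi-infinite line with all the other (possibly bad, possibly atypical) material on either side. This is exactly what~\eqref{eq:mono} is for — a recolouring of a point is only ever ``delayed or prevented'' by removing surrounding intervals, never created — but spelling out that a blue interval certified unrecolourable at level $t$ remains unrecolourable at level $t+1$ and hence in the limit, uniformly as $t \to \infty$, requires threading the ``never recoloured'' conclusion of Lemma~\ref{l:renorm} through the nested good blocks rather than just the ``$\B{r}$-good'' conclusion. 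Everything else is a union bound and Borel--Cantelli.
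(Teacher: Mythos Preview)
Your approach is essentially the same renormalisation hierarchy as the paper's, and the inductive bound $p_t\le q_t$ via the $(2k-3)$ pairs and the union bound on atypicality is exactly right. There is, however, a genuine error in the transfer step.

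Your appeal to~\eqref{eq:mono} is in the wrong direction. That inequality says that embedding a coloured-interval in a larger one can only \emph{increase} the number of recolourings of any point; it therefore cannot be used to conclude that a blue interval protected inside a level-$t$ block remains protected inside the whole line. So the sentence ``so conversely a blue interval protected inside a block stays protected inside the whole line'' is false as stated, and the argument collapses at that point.

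You have in fact already spotted the correct repair in your last paragraph: one must thread the ``never recoloured'' clause of Lemma~\ref{l:renorm} through the nested blocks, not merely the ``$\B{r}$-good'' conclusion. Concretely, the paper does this by declaring the hierarchy itself to be the complete sequence of recolourings (which it is, since $k>6$ forces $\bigcup_t C_0^{(t)}=\R$). Then at each passage $\Delta_t\to\Delta_{t+1}$ one applies the ``Furthermore'' part of Lemma~\ref{l:renorm}: provided the hypotheses of that lemma hold for the $k$ level-$t$ sub-blocks of $C_0^{(t+1)}$ (all typical, no two bad ones within distance two), the central blue interval of any good sub-block --- in particular of $C_0^{(t)}$ --- is not recoloured in that step. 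The event ``these hypotheses hold at level $t$'' has probability at least $1-q_{t+1}$ by precisely the same union bound you already used; so with probability at least $1-\sum_{t\ge T}q_t>0$ the central blue interval of $C_0^{(T)}$ survives every step of this particular complete sequence. Since the outcome is independent of the complete sequence (Lemma~\ref{l:unique}), a positive probability of a permanently blue point rules out a tie or a red-win and forces an almost-sure blue-win.

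Note that it is not enough to know only that $C_0^{(t)}$ is $\B{r}$-good for all $t\ge T$: one can cook up configurations where $C_0^{(t)}$ and $C_0^{(t+1)}$ are both good but the central blue interval of the former is swallowed by long red neighbours $[C_{\pm1}^{(t)}]$ before the long blue interval of $C_0^{(t+1)}$ forms. You need the full Lemma~\ref{l:renorm} hypotheses at each level, and the recursion for $q_t$ is in fact a bound on the failure probability of exactly that stronger event.
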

\begin{proof}
Recall that $\Delta = \Delta(\P_R, \P_B)$ is constructed as follows: take two  i.i.d.\ sequences $(\CR_i)_{i \in \Z}$ and $(\CB_i)_{i\in\Z}$ of random variables with distributions $\P_R$ and $\P_B$ respectively, and then define $\Delta$ to be the colouring of the real line given by
\[\dots +R_{-1}+B_{-1}+R_0+B_0+R_1+B_1+\dots,\]
where $R_i$ is a red interval with $|R_i|= \CR_i$ and $B_i$ is a blue interval with $|B_i| = \CB_i$ for each $i\in \Z$, and the origin is the boundary-point between $R_0$ and $B_0$.

We now define a complete sequence of recolourings as follows. Let $\Delta_0$ be the colouring obtained from $\Delta$ by replacing blocks of $2n$ consecutive red and blue intervals by their closure; in other words, $\Delta_0$ is given by
\[ \dots+C^{(0)}_{-1}+C^{(0)}_0+C^{(0)}_1+\dots, \]
where the coloured-interval $C^{(0)}_i$ is given by
\[C^{(0)}_i  = \left[R_{in-\lfloor n/2 \rfloor}+B_{in - \lfloor n/2 \rfloor} + \dots + R_{(i+1)n - \lfloor n/2 \rfloor - 1}+B_{(i+1)n - \lfloor n/2 \rfloor- 1}\right]\]
for each $i \in \Z$. Now, having defined $\Delta_t$ to be a colouring of the line into intervals with the representation
\[ \dots+C^{(t)}_{-1}+C^{(t)}_0+C^{(t)}_1+\dots, \]
where $C^{(t)}_{i}$ is a coloured-interval for each $i \in \Z$, we define $\Delta_{t+1}$ to be the colouring obtained from $\Delta_t$ by combining the coloured-intervals of $\Delta_t$ in blocks of size $k$. In other words, $\Delta_{t+1}$ is the colouring of the line into intervals with the  representation
\[ \dots+C^{(t+1)}_{-1}+C^{(t+1)}_0+C^{(t+1)}_1+\dots ,\]
where
\[C^{(t+1)}_i  = \left[C^{(t)}_{ik - \lfloor k/2 \rfloor}+ \dots + C^{(t)}_{(i+1)k - \lfloor k/2 \rfloor - 1}\right].\]

Recall that $k > 6$ for any renormalisable triple, so the union of the coloured-intervals $C_0^{(t)}$ is the entire real line. Consequently, if a monochromatic interval $I$ is recolourable at some stage, then $I$ gets recoloured eventually since $I$ must necessarily be contained in $C_0^{(t)}$ at some time $t \ge 0$; this implies that the sequence of recolourings used to generate the sequence $(\Delta_t)_{t \ge 0}$ is complete.

To finish the proof, we make the following observation.

\begin{proposition}
For all $t\ge 0$ and all $i \in \Z$, the probability that $C^{(t)}_i$ is $\B{r}$-bad is at most $q_t$.
\end{proposition}
\begin{proof}
We proceed by induction on $t$. For $t = 0$ this holds by the definition of $q_0=q(n, \B{r})$. Next, we observe that for all $t \ge 0$, the goodness and the typicality of $C^{(t)}_i$ is independent of $C^{(t)}_j$ for $i\ne j$. Now suppose that $C= [C_i+C_{i+1}+\dots+C_{i+k-1}]$ is a coloured-interval of $\Delta_{t+1}$ obtained by coalescing some $k$ consecutive coloured-intervals $C_i, C_{i+1}, \dots, C_{i+k-1}$ of $\Delta_t$. The probability that two of the coloured-intervals $C_i, C_{i+1},\dots, C_{i+k-1}$ are both bad and at most two apart is at most $(k-1)q_t^2+(k-2)q_t^2=(2k-3)q_t^2$.
Since $\eta_t$ bounds the probability that $C^{(t)}_i$ is not $(\B{r}, L_t)$-typical, the probability that one of $C_i, C_{i+1}, \dots, C_{i+k}$ is not $(\B{r},L_t)$-typical is at most $k\eta_t$. If neither of these events occur, then by Lemma~\ref{l:renorm}, $C=C_1+C_{i+1}+\dots+C_{i+k-1}$ is good. Hence, the probability that $C_i^{(t+1)}$ is $\B{r}$-bad is at most $(2k-3)q_t^2 + k\eta_t = q_{t+1}$, as required.
\end{proof}

We now claim that there is a positive probability that there exists a point which only changes colour a finite number of times and is ultimately blue; the existence of any such point clearly precludes a red-win or a tie, and therefore implies that blue wins almost surely. To see the claim, note that as $\sum_{t \ge 0} q_t$ converges, there exists a $T \ge 0$ such that $\sum_{t \ge T}q_t < 1$. Consequently, the probability that $C_0^{(t)}$ is $\B{r}$-good for each $t \ge T$ is positive since this probability is at least $1 - \sum_{t \ge T}q_t$. The result now follows by noting that if $C_0^{(t)}$ is $\B{r}$-good for each $t \ge T$, then the central blue interval of $C_0^{(T)}$ is never recoloured.
\end{proof}

To apply Theorem~\ref{t:renorm} it is helpful to obtain some bounds on $\eta_t$; the simplest case occurs when the distributions $\P_R$ and $\P_B$ have finite variances.

\begin{theorem}\label{t:renorm2}
Fix a renormalisable triple $\B{r} = (\alpha, \beta, k)$, a pair of probability distributions $\P_R$ and $\P_B$ with finite variances $\sigma_R^2$ and $\sigma_B^2$ respectively, and set 
 \[
c=\frac{(\sigma_R^2+\sigma_B^2)(\beta+1)^2}{(\mu_R+\mu_B)^2(\beta-1)^2}.
\]
For all large enough $n\in \N$ such that the equation
\begin{equation}\label{eq:quad}
x = (2k-3)x^2 + kc/n
\end{equation} 
has real roots, if $q(n,\B{r})$ does not exceed the largest positive root of~\eqref{eq:quad}, then blue wins almost surely.
\end{theorem}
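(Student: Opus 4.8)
The plan is to deduce Theorem~\ref{t:renorm2} from Theorem~\ref{t:renorm} by exhibiting an explicit admissible choice of the sequences $(\eta_t)_{t\ge 0}$ and $(L_t)_{t\ge 0}$, obtained from Chebyshev's inequality. Fix the renormalisable triple $\mathbf{r}=(\alpha,\beta,k)$. The coloured-interval formed by concatenating $2k^t n$ alternately red and blue intervals with independent lengths distributed as $\P_R$ and $\P_B$ has total length $S_t$ with $\E S_t = k^t n(\mu_R+\mu_B)=:m_t$ and $\V S_t = k^t n(\sigma_R^2+\sigma_B^2)$. First I would take $L_t:=2m_t/(\beta+1)$, so that the typicality window $(L_t,\beta L_t)$ of~\eqref{e:b3} is exactly the symmetric interval about $m_t$ of radius $m_t(\beta-1)/(\beta+1)$. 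Chebyshev's inequality then yields
\[
\P\bigl(S_t\notin(L_t,\beta L_t)\bigr)\le\frac{\V S_t}{\bigl(m_t(\beta-1)/(\beta+1)\bigr)^2}=\frac{(\sigma_R^2+\sigma_B^2)(\beta+1)^2}{k^t n(\mu_R+\mu_B)^2(\beta-1)^2}=\frac{c}{k^t n},
\]
so one may set $\eta_t:=c/(k^t n)$; note that this is precisely why the constant $c$ appears in the statement.

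With this choice, the sequence $(q_t)_{t\ge 0}$ supplied by Theorem~\ref{t:renorm} has $q_0=q(n,\mathbf{r})$ and $q_{t+1}=(2k-3)q_t^2+kc/(k^t n)$. Since $\eta_t\le\eta_0=c/n$, we have $q_{t+1}\le f(q_t)$, where $f(x):=(2k-3)x^2+kc/n$ is increasing and convex on $[0,\infty)$ and whose fixed points are exactly the roots of~\eqref{eq:quad}. The hypothesis that~\eqref{eq:quad} has real roots is the condition $n\ge 4k(2k-3)c$ (which I take to be the meaning of ``$n$ large enough''), and in this regime both roots are positive, since their sum $1/(2k-3)$ and product $kc/n$ are positive; write $r_+$ for the larger one. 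Crucially, because at least one of $\P_R,\P_B$ is non-atomic we have $\sigma_R^2+\sigma_B^2>0$ and hence $c>0$, so from $r_+=\bigl(1+\sqrt{1-4k(2k-3)c/n}\bigr)/\bigl(2(2k-3)\bigr)$ one sees that $r_+<1/(2k-3)$.

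Now suppose $q(n,\mathbf{r})\le r_+$, as in the statement. Since $f$ is increasing and $f(r_+)=r_+$, induction gives $q_t\le r_+$ for all $t$. Setting $\theta:=(2k-3)r_+<1$, it follows that $(2k-3)q_t^2\le\theta q_t$ and hence $q_{t+1}\le\theta q_t+kc/(k^t n)$; unwinding this linear recurrence shows that $q_t$ decays geometrically (up to a polynomial factor, in the borderline case $\theta=1/k$), so $\sum_{t\ge 0}q_t<\infty$. Theorem~\ref{t:renorm} then gives that blue wins almost surely, as desired.

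The Chebyshev estimate and the recursive bookkeeping are routine; the one point that requires care — and the reason the hypothesis is phrased in terms of the \emph{largest} root — is the passage from the quadratic recursion to a genuinely contracting linear one. The ``frozen'' iteration $x\mapsto f(x)$ has the smaller root $r_-$, not $0$, as its attracting fixed point on $[0,r_+]$, so a forcing term of constant size $c/n$ would merely pin $q_t$ near $r_-$; it is the combination of the geometric decay of $k\eta_t$ with the strict inequality $\theta=(2k-3)r_+<1$ that drives $q_t$ to $0$ fast enough to be summable, while beyond $r_+$ the iteration escapes to infinity and the method fails.
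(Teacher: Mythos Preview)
Your proof is correct and follows essentially the same route as the paper: choose $L_t=2k^tn(\mu_R+\mu_B)/(\beta+1)$, apply Chebyshev to get $\eta_t=c/(k^tn)$, use the larger root $r_+$ of~\eqref{eq:quad} as an invariant upper bound for $q_t$, linearise via $(2k-3)q_t^2\le (2k-3)r_+\,q_t=\theta q_t$ with $\theta<1$, and conclude geometric decay so that Theorem~\ref{t:renorm} applies. Your explicit justification that $c>0$ (hence $r_+<1/(2k-3)$ strictly) from the standing non-atomicity assumption, and your remark on why the \emph{larger} root is the correct threshold, are nice clarifications of points the paper leaves implicit.
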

\begin{proof}
For each $t\ge 0$, define $n_t = k^tn$ and take $L_t=2n_t(\mu_R+\mu_B)/(\beta+1)$. If $C$ is a coloured-interval which is the concatenation of $2n_t$ alternately red and blue intervals, then note that $\E[|C|] = n_t (\mu_R + \mu_B)$ and $\V[|C|] = n_t(\sigma_R^2+\sigma_B^2)$. So by Chebyshev's inequality,
\begin{align*}
 \P(|C|\notin(L_t,\beta L_t))
 &=\P\left(||C|-\E[|C|]|> \frac{(\beta-1)\E[|C|]}{\beta+1}\right)\\
 &\le \frac{n_t(\sigma_R^2+\sigma_B^2)(\beta+1)^2}{(n_t(\mu_R+\mu_B)(\beta-1))^2}\\
 &=\frac{c}{n_t}.
\end{align*}

Now, define $\eta_t=c/n_t$ and note that the sequence $(\eta_t)_{t \ge 0}$ decreases exponentially with $t$. Also, observe that since $k > 6$,  if the roots of~\eqref{eq:quad} are real, then they must both lie in the interval $(0,1)$; since $n$ is assumed to be large enough to ensure that the roots of~\eqref{eq:quad} are real, let $Q \in (0,1)$ be the largest root of the equation $x = (2k-3)x^2+kc/n$.

Let $q_0 = q(n,\B{r})$, and consider the sequence $(q_t)_{t \ge 0}$ defined by 
\[q_{t+1}=(2k-3)q_t^2+k\eta_t.\] If $q_0 = q(n,\B{r}) \le Q$, then it is easy to check by induction that the sequence $(q_t)_{t \ge 0}$ is bounded above by $Q$. We claim that the sequence $(q_t)_{t \ge 0}$ in fact decreases exponentially with $t$ provided $q_0 \le Q$; if this holds, then it is easy to see that the result follows from Theorem~\ref{t:renorm}.

To finish the proof, note that if we have $q_t\le Q$ for all $t \ge 0$, then it plainly follows that
\begin{align*}
q_{t} &= (2k-3)q_{t-1}^2+k\eta_{t-1}\\ 
&\le (2k-3)Q q_{t-1} + k\eta_{t-1}\\
&= \zeta q_{t-1} + k\eta_{t-1},
\end{align*}
where $\zeta=(2k-3)Q = 1-(k\eta_0)/Q<1$. Since $\eta_t\to 0$ exponentially as $t \to \infty$, it is now clear that $q_t \to 0$ exponentially as $t \to \infty$; to see this, we expand the above estimate to conclude that 
\[q_t \le k\eta_{t-1}+\zeta k\eta_{t-2}+\dots+\zeta^{t-1} k\eta_0+\zeta^tq_0,\] 
from which it follows that $q_t$ is at most the sum of $\zeta^t q_0$ and a geometric sum which is at most $(\max\{\zeta,1/k\})^{t-1} (tk\eta_0)$.
\end{proof}

\section{Applications of the renormalisation argument}\label{s:renorm}
In this section, we shall apply the renormalisation argument to prove Theorem~\ref{t:mean}, Claim~\ref{t:counter} and Claim~\ref{t:trans}. We will need some Chernoff-type bounds for the tail of the Poisson distribution. We state one such estimate here; see~\citep{probtextbook} for a proof.

\begin{proposition}\label{c:chern}
Let $X$ be a random variable with distribution $\Po(\lambda)$. Then for all $x \le \lambda$,
\[
 \P(X\le \lambda-x)\le \exp\left(-\frac{x^2}{2\lambda}\right),
\]
and for all $x \ge 0$,
\[
 \P(X\ge \lambda+x)\le \exp \left(- \frac{x^2}{2\lambda+2x/3} \right). \eqno\qed
\]
\end{proposition}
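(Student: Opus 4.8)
The plan is to run the Cram\'er--Chernoff method, using only the fact that for $X\sim\Po(\lambda)$ the exponential moment is $\E[e^{tX}]=\exp(\lambda(e^t-1))$ for every real $t$; this is the sole probabilistic ingredient, and everything after it is a one-variable optimisation followed by an elementary scalar inequality.

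For the upper tail, fix $x\ge0$ and let $t>0$. Markov's inequality applied to $e^{tX}$ gives $\P(X\ge\lambda+x)\le\exp\bigl(\lambda(e^t-1)-t(\lambda+x)\bigr)$, and the choice $t=\log(1+x/\lambda)$, which minimises the exponent, yields $\P(X\ge\lambda+x)\le\exp\bigl(-\lambda\,\psi(x/\lambda)\bigr)$, where $\psi(u)=(1+u)\log(1+u)-u$. Writing $u=x/\lambda$, the claimed bound $\exp\bigl(-x^2/(2\lambda+2x/3)\bigr)$ follows once one checks the scalar inequality $\psi(u)\ge u^2/(2+2u/3)$ for all $u\ge0$. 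For the lower tail, fix $0\le x\le\lambda$ and let $s>0$; Markov applied to $e^{-sX}$ gives $\P(X\le\lambda-x)\le\exp\bigl(\lambda(e^{-s}-1)+s(\lambda-x)\bigr)$, and taking $s=-\log(1-x/\lambda)$ gives $\P(X\le\lambda-x)\le\exp\bigl(-\lambda\,\varphi(x/\lambda)\bigr)$ with $\varphi(u)=u+(1-u)\log(1-u)$ (read as $\varphi(1)=1$). Here the bound $\exp(-x^2/(2\lambda))$ follows from $\varphi(u)\ge u^2/2$ on $[0,1]$.

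Both remaining inequalities are routine calculus. The lower-tail one is immediate: since $-\log(1-u)=\sum_{j\ge1}u^j/j$ one has $\varphi(u)=\sum_{j\ge2}u^j/\bigl((j-1)j\bigr)$, a series with nonnegative coefficients whose leading term is exactly $u^2/2$. The upper-tail inequality is where a little care is genuinely needed, since the naive estimate $\psi(u)\ge u^2/2$ fails for large $u$ and one really does need the $2u/3$ correction; I would prove it by showing that $g(u):=(1+u/3)\psi(u)-u^2/2\ge0$ for $u\ge0$, which reduces (using $\psi'(u)=\log(1+u)$ and $\psi''(u)=1/(1+u)$) to the observation that $g(0)=g'(0)=g''(0)=0$ while $g'''(u)=2u/\bigl(3(1+u)^2\bigr)\ge0$. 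I expect this last step to be the main (if modest) obstacle; the rest of the argument is entirely mechanical.
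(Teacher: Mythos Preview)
Your argument is correct. The Cram\'er--Chernoff optimisation is carried out properly, and both of the residual scalar inequalities are verified cleanly: the power-series expansion of $\varphi$ settles the lower-tail case, and the computation $g'''(u)=2u/\bigl(3(1+u)^2\bigr)\ge0$ together with $g(0)=g'(0)=g''(0)=0$ settles the upper-tail case. One tiny remark: the statement as written allows $x\le\lambda$ with $x$ possibly negative, where the lower-tail inequality is vacuously false; you silently (and correctly) restrict to $0\le x\le\lambda$, which is the only range in which the bound is meaningful and the only range used later in the paper.

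As for comparison with the paper: there is nothing to compare. The paper does not prove this proposition at all; it is stated with a terminal $\qed$ and the reader is referred to a probability textbook. Your write-up is exactly the standard textbook derivation, so it is entirely in keeping with what the citation would supply.
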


We first prove Theorem~\ref{t:mean} which states that for any $K>0$, there exist distributions $\P_R$ and $\P_B$ with $\mu_R > K\mu_B$ where blue wins almost surely in linear coalescence. The basic nature of the example used to demonstrate this is as follows. We choose $\P_R$ so that the vast majority of the contribution to $\mu_R$ comes from exponentially long, exponentially improbable intervals. Meanwhile, $\P_B$ is concentrated on intervals which are slightly longer than almost all of the red intervals. Intuitively, at each stage almost all of the red intervals are short, and are absorbed into the slightly longer adjacent blue intervals; this causes the typical blue interval to now be longer than almost all of the remaining red intervals, and the process repeats. 

\begin{proof}[Proof of Theorem~\ref{t:mean}]
Fix $\B{r}= (\alpha, \beta, k) = (0.23, 1.04, 10)$ and let $n \in \N$ be a sufficiently large natural number. We note that we have taken $\beta$ just above $1$, $\alpha$ just below the bound implied by~\eqref{e:b1}, and $k$ sufficiently large to ensure~\eqref{e:b2}.

We take the blue distribution $\P_B$ to be $U[1,1+\eps]$ where $\eps=1/n^2$. We then set $N = 2K/\eps$ and take the red distribution $\P_R$ to be the distribution of the random variable $1+\sum_{i=1}^N k^iX_i$, where the $X_i$ are independent Poisson random variables such that the mean of $X_i$ is $\eps k^{-i}$. Clearly, $\mu_B=1+\eps/2$ and $\mu_R=1+N\eps = 1 + 2K$, so $\mu_R>K\mu_B$.

Writing $C_t$ for the concatenation of $2k^tn$ alternately red and blue intervals, we note that with probability $1-O(n\eps)=1-O(1/n)$, all the red intervals in $C_0$ are of length $1$, so $C_0$ is $\B{r}$-good (indeed, $[C_0]$ is completely blue apart from a single red interval of length $1$ at one end) with probability $1-O(1/n)$.
Thus, we see that $q(n, \B{r}) = O(1/n)$. Let
\[
 L_t=\frac{2}{\beta+1}\left(2+\frac{\eps}{2}+\min\{t,N\}\eps\right)k^tn
\]
and let $1-\eta_t$ be the probability that $L_t<|C_t|<\beta L_t$. For $t>N$, it follows from Chebyshev's inequality (as in the proof of Theorem~\ref{t:renorm2}) that for any fixed $n$, $\eta_t\to0$ exponentially as $t\to\infty$. We shall show that as $n\to\infty$, $\eta_t \to 0$ \emph{uniformly} in $t$. This would show that if we define $q_0 = q(n, \B{r}) = O(1/n)$ and $q_{t+1} = (2k-3)q_t^2 + k\eta_t$, then as long as $n$ is chosen to be sufficiently large, $\sum_{t\ge 0} q_t$ converges; blue then wins almost surely by Theorem~\ref{t:renorm}.

In the calculations which follow, we shall make use of the fact that since $\beta = 1.04$, we have $2/(\beta+1) < 0.99$ and $2\beta/(\beta+1) > 1.01$.
 
To estimate $\P(|C_t| \le L_t)$, it suffices to estimate the probability that the length of the concatenation of $k^tn$ red intervals is at most $0.99(1+\min\{t,N\}\eps)k^tn$ as the minimum possible length of a blue interval is $1>0.99(1+\eps/2)$. The length of the concatenation of $k^tn$ red intervals is given by the random variable 
\[k^tn+\sum_{i=1}^N k^iY_i,\] 
where the $Y_i$ are independent Poisson random variables such that the mean of $Y_i$ is $\lambda_i=\eps k^{t-i}n$. If $t<1/100\eps=n^2/100$, then this random variable is deterministically larger than $0.99(1+\min\{t,N\}\eps)k^tn$. If $t \ge n^2/100$ on the other hand, then we appeal to Proposition~\ref{c:chern}. Note that 
\[\P(Y_i\le 0.995\lambda_i)\le \exp\left({-10^{-5}\lambda_i}\right),\] 
Writing $t'=\min\{t-2\log_kn,N\}$, the probability that $Y_i\ge 0.995\lambda_i$ for each $1 \le i \le t'$ is at least
\[
1 - \sum_{i=1}^{t'} \exp\left({-10^{-5}\eps k^{t-i}n}\right) \ge 1- N\exp\left({-10^{-5}n}\right),
\]
which is $1-o(1)$ as $n\to\infty$. If this happens, then
\[ k^tn+\sum_{i=1}^N k^iY_i\ge
 k^tn+\sum_{i=1}^{t'}k^iY_i\ge (1+0.995t'\eps)k^tn
\ge 0.99(1+\min\{t,N\}\eps)k^tn,
\]
where the last inequality follows from the fact that $t\ge 0.01n^2$ so that $t-2\log_kn\ge 0.995t$ for all sufficiently large $n$.

To estimate $\P(|C_t| \ge \beta L_t)$, it is enough to show that the total length of the concatenation of $k^tn$ red intervals is very likely to be less than $1.01(1+\min\{t,N\}\eps) k^tn$ as the length of each blue interval is less than $1+\eps<1.01<2\beta(1+\eps/2)/(\beta + 1)$. In other words, it is enough to show that $k^tn+\sum_{i=1}^N k^iY_i$, where the $Y_i$ are independent Poisson random variables such that the mean of $Y_i$ is $\lambda_i=\eps k^{t-i}n$, is very likely to be less than $1.01(1+\min\{t,N\}\eps) k^tn$.

A similar calculation to the one above shows that with high probability, indeed, with probability $1 - N\exp(-n/10^6)$, we have $Y_i<1.005\lambda_i$ for each $i\le t'=t-2\log_k n$.  For $i>t'$, we note that
\[
\sum_{i=t'+1}^{\min\{t,N\}}\E\left[k^iY_i\right]=(\min\{t,N\}-t')\eps k^tn \le N\eps k^t n.
\]
For these values of $i$, we use Markov's inequality to deduce that with probability at least $1-1/\log{n}$, the sum above is
at most $(\log n) N\eps k^t n \le(\log n)^2\eps k^tn$. Thus, with high
probability,
\begin{align*}
 \sum_{i = 1}^{\min\{t,N\}}k^iY_i
 &\le 1.005\min\{t,N\}\eps k^tn +(\log n)^2\eps k^tn\\
 &\le 1.005\min\{t,N\}\eps k^tn+0.01 k^tn.
\end{align*}
Indeed, when $n$ is sufficiently large, $(\log n)^2\eps=n^{-2}(\log n)^2 \le 0.01$. Finally, with probability $1-O(\eps)$, $Y_i=0$ for all $i>t$. 

Putting these together, we see the total length of the red intervals is, with high probability, at most
\[
 k^tn+(1.005\min\{t,N\}\eps k^tn+0.01 k^tn)+0 \le 1.01(1+\min\{t,N\}\eps) k^tn
\]
as required.

In conclusion, if $n$ is sufficiently large then we can take $\eta_t$ to be uniformly small and eventually decreasing exponentially with $t$, and as $q(n,\B{r})$ can be made arbitrarily small by taking $n$ to be sufficiently large, we see that blue wins by Theorem~\ref{t:renorm} for all suitably large $n$.
\end{proof}

The advantage of the renormalisation argument is that it allows us to deduce the outcome of linear coalescence from the typical outcome of the coalescence process on a large finite coloured-interval. The main difficulty in applying Theorem~\ref{t:renorm} or~\ref{t:renorm2} with some fixed renormalisable triple $\B{r} \in \B{R}$ is that we need to understand the coalescence process on a large finite coloured-interval reasonably well; in particular, we need good estimates for the probability $q(n,\B{r})$ that the concatenation of $2n$ alternately red and blue intervals is $\B{r}$-bad. We need $q(n,\B{r})$ to be quite small for the renormalisation argument to be useful which means that in practice, we need to take $n$ to be quite large; but this in turn often makes the task of proving a useful bound on $q(n,\B{r})$ impractical.

However, we can use a Monte Carlo approach to obtain high confidence results. Namely, we simulate linear coalescence on the concatenation of $2n$ alternately red and blue intervals many times and count the number of times the resulting interval is good. We can deduce that if $q(n,\B{r})$ was too big, then the probability of obtaining these simulated results is incredibly small, assuming of course that the random number generator used in the simulation adequately resembles real random numbers and that no errors have occurred during the programming or execution of the computer program. 

We shall use this Monte Carlo approach to demonstrate Claims~\ref{t:counter} and~\ref{t:trans}.

\begin{proof}[Proof of Claim~\ref{t:counter}]
Fix $\B{r}= (\alpha, \beta, k) = (0.23, 1.04, 10)$ and let $n_0=2 \times 10^6$. Next, fix $c_1=0.08$ and $c_2=0.01$. 

We first construct a pair of distributions $\P_R$ and $\P_B$ such that $\P_R \succ \P_B$ for which blue wins almost surely with very high confidence. We then sketch how we may, as in the proof of Theorem~\ref{t:main}, `blow up' the mean of the red distribution without changing the outcome of the coalescence process. 

The main obstacle then is to allow stochastic domination. The intuition behind our construction is that the blue intervals being short has two opposing effects. While short blue intervals are less likely to absorb adjacent red intervals, they also contribute less to any red intervals which absorb them. Hence, any blue interval which is likely to be absorbed may as well be short. Of course, this intuition is insufficient, as the marginal shortening of the red intervals may, in turn, prevent blue intervals from growing quickly.

We take the red distribution $\P_R$ to be uniform on $[1,1+c_2]$ and the blue distribution $\P_B$ to be uniform on $[0,c_1c_2]\cup[1+c_1c_2,1+c_2]$. Note that we can obtain the blue distribution by sampling from the red distribution and subtracting $1$ from the sampled length if it is less than $1+c_1c_2$; it is then clear that $\P_R$ stochastically dominates $\P_B$ strictly.

We would like to apply Theorem~\ref{t:renorm2}. Elementary calculations give
\begin{align*}
 \mu_B&=1+\frac{c_2}{2}-c_1,\\
 \mu_R&=1+\frac{c_2}{2},\\
 \sigma^2_B&=c_1(1-c_1)(1+c_2)+\frac{c_2^2}{12}, \text{ and}\\
 \sigma^2_R&=\frac{c_2^2}{12}
 \end{align*}
To apply Theorem~\ref{t:renorm2}, we need $q(n_0, \B{r})$ to be less than the largest positive root of the equation $(2k-3)x^2+kc/n_0 = x$, where $k=10$, $n_0=2\times10^6$ and
\[
 c=51^2 \times \frac{c_1(1-c_1)(1+c_2)+c_2^2/6}{(2+c_2-c_1)^2}.
\]
A simple calculation shows that it is sufficient to show that $q(n_0, \B{r}) < 0.058$. To estimate $q(2\times10^6, \B{r})$, the coalescence process on the concatenation of $4\times 10^6$ alternately red and blue intervals was simulated 1000 times. The coloured-intervals obtained were $\B{r}$-good in 987 trials out of the total of 1000 trials performed. If $q \ge 0.058$, then the probability of obtaining at least 987 good coloured-intervals in 1000 trials is less than $10^{-12}$. Hence, with very high confidence, blue wins almost surely.

We now sketch how we can mimic the proof of Theorem~\ref{t:mean} to show that we may alter the red distribution to make its mean arbitrarily large without changing the fact that blue wins. 

For a given $K > 0$ and an integer $n \ge 0$, let $\P_{R,n}$ denote the distribution of the random variable $\sum_{i=0}^{N} k^iX_i$, where $N = 2Kn$ and the $X_i$ are independent random variables such that the distribution of $X_0$ is $\P_R$ and that of $X_i$ is $\Po(1/k^in)$ for each $1 \le i \le N$. Note that $\P_{R,0} = \P_R$. Note also that for each $n \ge 1$, the ratio of the means of $\P_{R,n}$ and $\P_B$ is at least $K$ and since $\P_{R,n} \succ \P_R$, we also have $\P_{R,n} \succ \P_B$. We claim that in linear coalescence where the red and blue distributions are $\P_{R,n}$ and $\P_B$, blue wins almost surely if $n$ is sufficiently large.

For each $t \ge 0$ and $n \ge 0$, let
\[
L_{t,n} = \frac{2}{\beta + 1}(\mu_R + \mu_B +\min\{t,N\}/n)k^tn_0
\]
and define $\eta_{t,n}$ to be the probability that concatenation of $2k^tn_0$ alternately red and blue intervals whose lengths have distributions $\P_{R,n}$ and $\P_B$ respectively is not $(\B{r}, L_{t,n})$-typical. Finally, we define the sequence $(q_{t,n})_{t \ge 0}$ by setting
\[
q_{t+1,n}=(2k-3)q_{t,n}^2+k\eta_{t,n},
\]
for each $t\ge 0$, where $q_{0,n}$ is the probability that the concatenation of $2n_0$ alternately red and blue intervals whose lengths have distributions $\P_{R,n}$ and $\P_B$ is $\B{r}$-bad.

We know that $q_{0,0} < 0.058$ with very high confidence, and in this case Theorem~\ref{t:renorm2} tells us precisely that $\sum_{t \ge 0} q_{t,0}$ converges. As in the proof of Theorem~\ref{t:renorm2}, we can show that $\eta_{t,n}$ decreases exponentially with $t$ for any fixed $n \ge 0$. Furthermore, we can show mimicking the proof of Theorem~\ref{t:mean} that as $n \to \infty$, $\eta_{t,n} \to \eta_{t, 0}$ uniformly in $t$. Finally, it is clear that $q_{0,n} = q_{0,0} + O(1/n)$. It follows that for any $K > 0$, the sum $\sum_{t \ge 0} q_{t,n}$ converges if $n$ is chosen to be suitably large. By Theorem~\ref{t:renorm2}, blue wins almost surely.
\end{proof}

We now turn to the proof of Claim~\ref{t:trans}.

\begin{proof}[Proof of Claim~\ref{t:trans}]
Let $\P_R$ be the distribution which is deterministically $1$, $\P_G$ be the (exponential) $\Exp(1.22)$ distribution and $\P_B$ the (uniform) $U[0,2.19]$ distribution. As before, fix $\B{r}= (\alpha, \beta, k) = (0.23, 1.04, 10)$ and let $n_0=2 \times 10^6$.

\begin{figure}
	\begin{subfigure}{\textwidth}
		\centering
		\includegraphics[width= 0.95\textwidth, height = 4.0cm]{./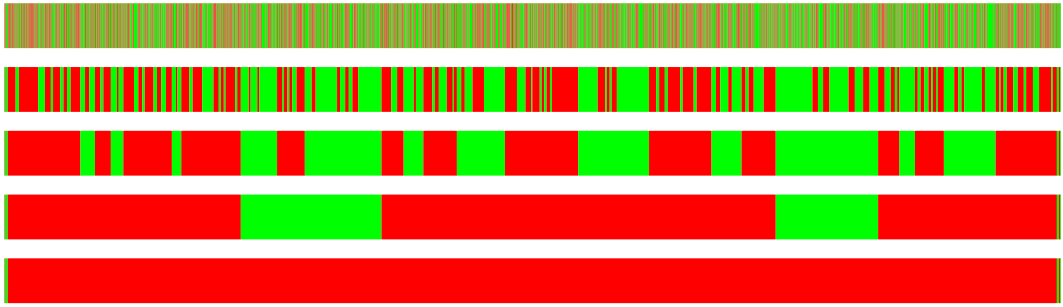}
		\caption{$\P_R \rhd \P_G$.}
		\vspace{0.5cm}
	\end{subfigure}
	\begin{subfigure}{\textwidth}
		\centering
		\includegraphics[width= 0.95\textwidth, height = 4.0cm]{./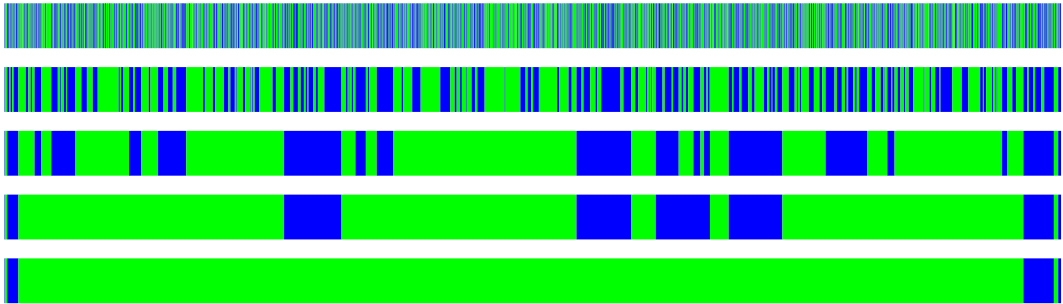}
		\caption{$\P_G \rhd \P_B$.}
		\vspace{0.5cm}
	\end{subfigure}
	\begin{subfigure}{\textwidth}
		\centering
		\includegraphics[width= 0.95\textwidth, height = 4.0cm]{./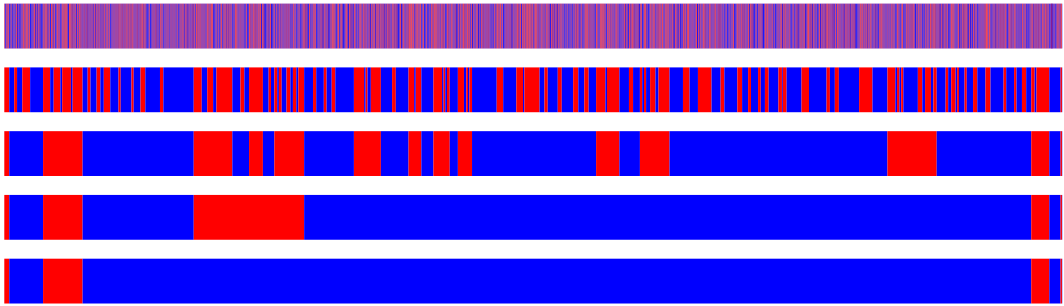}
		\caption{$\P_B \rhd \P_R$.}
	\end{subfigure}
	\caption{Coalescence on the line is intransitive.}
	\label{intrans}
\end{figure}

Let $q_{BR}(n_0, \B{r})$ denote the probability that the concatenation of $2n_0$ alternately red and blue intervals is $\B{r}$-bad. Define $q_{RG}(n_0, \B{r})$ and $q_{GB}(n_0, \B{r})$ analogously (where in each case, we ask for a long central interval of the appropriate colour in the closure).

To show that $\P_R \rhd \P_G$, $\P_G \rhd \P_B$ and $\P_B \rhd \P_R$, by Theorem~\ref{t:renorm2}, it is sufficient to verify, with $n = 2\times 10^6$, that $q_{RG}(n, \B{r}) < 0.0625$, $q_{GB}(n, \B{r}) < 0.063$  and $q_{BR}(n, \B{r}) < 0.0599$. These inequalities were verified using Monte Carlo methods with very high confidence.
\end{proof}

\section{The \texorpdfstring{$\ell$}{ell}-bounding argument}\label{s:strat}
The renormalisation argument has two main disadvantages. First, it requires us to estimate certain large finite-dimensional numerical integrals fairly precisely. Second, when we wish to obtain a high confidence result using the renormalisation argument, we can only do so for a fixed pair of distributions; in general, it is not possible to use these techniques to compare different \emph{families} of distributions since, as we have seen, linear coalescence is far from monotone. With a view of getting around these difficulties, in this section, we shall introduce a method for tracking the coalescence process by maintaining a collection of approximations to the lengths of the intervals.

Suppose that we wish to show that blue wins, but are unable to follow the process precisely. It is possible to approximate the process in a way that is `pessimistic for blue' so that if blue still wins in this setting, then we can deduce that blue wins in the original process. The first observation is that if we occasionally make mistakes and recolour blue intervals not surrounded by longer red intervals, then this is always pessimistic for blue.

\begin{lemma}\label{l:bias}
Suppose that we coalesce intervals using a rule that only recolours red intervals when surrounded by longer blue intervals and always recolours blue intervals when surrounded by longer red intervals (but may sometimes recolour blue intervals when this does not hold). If blue wins in this new process, then it wins in the original coalescence process.
\end{lemma}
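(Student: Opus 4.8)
The plan is to exhibit a \emph{legitimate} (honest) complete sequence of recolourings of $\Delta=\Delta(\P_R,\P_B)$ under which every point is eventually permanently blue; by Proposition~\ref{p:unique} this is equivalent to a blue-win in the honest process. I would build such a sequence by coupling it to the given run of the modified dynamics. Write $\mu_0=\Delta,\mu_1,\mu_2,\dots$ for the colourings produced successively by the modified rule, and construct in parallel colourings $\Delta=h_0,h_1,h_2,\dots$, each obtained from the previous one by a (possibly empty) block of honest recolourings, maintaining throughout the invariant $\mathrm{blue}(\mu_i)\subseteq\mathrm{blue}(h_i)$, equivalently $\mathrm{red}(h_i)\subseteq\mathrm{red}(\mu_i)$. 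When the $i$-th modified move recolours a blue interval to red --- legal or not --- the red set of $\mu$ only grows, so the invariant is preserved by setting $h_i=h_{i-1}$ and doing nothing in the honest process. The only substantive case is when the $i$-th modified move recolours a red interval $R$ of $\mu_{i-1}$ to blue; by the hypothesis on the modified rule, $R$ is then surrounded by two \emph{strictly longer} blue intervals of $\mu_{i-1}$, and $\mathrm{blue}(\mu_i)=\mathrm{blue}(\mu_{i-1})$ together with the span of $R$, so to restore the invariant it suffices to perform honest recolourings turning the span of $R$ entirely blue in $h$.

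The heart of the argument is to show that this last operation can always be carried out using honest recolourings \emph{confined to the span of $R$}, so that the invariant is not disturbed elsewhere. Here one uses $\mathrm{red}(h_{i-1})\subseteq\mathrm{red}(\mu_{i-1})$: the two blue neighbours of $R$ in $\mu_{i-1}$ are blue in $h_{i-1}$ as well, so the maximal blue intervals $\beta_-$ and $\beta_+$ of $h_{i-1}$ flanking the span of $R$ each contain the span of one of these neighbours and hence have length exceeding $|R|$, whereas every interval of $h_{i-1}$ lying strictly inside the span of $R$ has length less than $|R|$ (being contained in a set of length $|R|$). Consider the coloured-interval $C^{\ast}=\beta_-+\bigl(\text{the portion of }h_{i-1}\text{ strictly inside the span of }R\bigr)+\beta_+$: by Lemma~\ref{l:order} its closure is well-defined, and in a closed coloured-interval the sequence of lengths is unimodal; since here the two end intervals are strictly longer than every interval between them, $[C^{\ast}]$ can contain no interval between $\beta_-$ and $\beta_+$, i.e.\ $[C^{\ast}]$ is a single blue interval. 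Realising this closure by recolourings --- each valid inside $C^{\ast}$, hence valid inside $h_{i-1}$, since $C^{\ast}$ is a contiguous sub-piece of $h_{i-1}$ --- turns the span of $R$ blue while creating no red outside the span of $R$; one then checks this restores the invariant with $\mathrm{red}(h_i)\subseteq\mathrm{red}(\mu_i)$.

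To conclude, fix a point $x$. Since blue wins in the modified process, $x$ is recoloured finitely often there and is ultimately blue; and a repair step as above affects the colour of $x$ in $h$ only if $x$ lies in the span of the red interval being recoloured, which forces $x$ to be recoloured at that modified step --- so only finitely many repair steps affect $x$. Hence our honest sequence recolours $x$ finitely often, and the invariant evaluated at the appropriate stage shows that after the last such step $x$ is blue and stays blue. Thus every point is eventually permanently blue under the honest sequence we have built. That sequence is moreover \emph{complete}: a recolourable interval that was never subsequently recoloured would remain recolourable forever (its neighbours cannot be recoloured first and only grow), so it would persist as a monochromatic interval, keeping each of its points, or each point of its too-short monochromatic neighbours, of a fixed colour forever --- contradicting the previous sentence. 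Hence, by Proposition~\ref{p:unique}, blue wins in the honest process.

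\medskip

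The step I expect to be the main obstacle is the local repair in the second paragraph: verifying that within the span of a single recoloured red interval the honest dynamics can always be driven to an all-blue state \emph{without producing red outside that span}, so that the global invariant $\mathrm{blue}(\mu_i)\subseteq\mathrm{blue}(h_i)$ survives each step. The unimodality observation makes the existence of the right recolourings easy once the set-up is in place, so the real work is the bookkeeping that keeps the two processes correctly aligned; the completeness of the resulting honest sequence, while needed to invoke Proposition~\ref{p:unique}, is then essentially free.
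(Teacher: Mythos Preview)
Your proof is correct and follows essentially the same coupling strategy as the paper: maintain an honest process alongside the modified one under the invariant that every blue point of the modified colouring is blue in the honest one, and show that the crucial red-to-blue repair step can be carried out legally because the flanking blue intervals each have length exceeding $|R|$. The only differences are cosmetic---the paper phrases the invariant structurally (each blue interval of the modified process is a blue interval of the honest one, each red interval a red-ended coloured-interval) rather than set-theoretically, and it glosses over the explicit completeness verification that you supply at the end.
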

\begin{proof}
Imagine two copies of the line $\R_1$ and $\R_2$ with the same sequence of monochromatic intervals on both. We run the new process on $\R_1$, and at each step, if we recolour an interval $I$ using it's neighbours $I_-$ and $I_+$ on $\R_1$, then we replace the coloured-interval corresponding to $I_- + I + I_+$ by its closure on $\R_2$.

We show by induction that at each step, a blue interval in $\R_1$ corresponds to a blue interval in $\R_2$, and a red interval in $\R_1$ corresponds to a red-ended coloured-interval, i.e., a coloured-interval that starts and ends with a red subinterval, in $\R_2$.

Suppose that this holds at time $t-1$, and suppose that we recolour a blue interval $B$ red in $\R_1$ at time $t$. Then in $\R_2$, we obtain (even without recolouring) a red-ended interval $C_- + B\ + C_+$, where $C_\pm$ are the red-ended intervals corresponding to the red neighbours of $B$ in $\R_1$. Also, any valid recolouring of the internal subintervals of $C_- + B\ + C_+$ results in a red-ended interval. 

Now suppose that we recolour a red interval $R$ blue in $\R_1$. Then in $\R_1$, this gives a blue interval which is the closure of $B_- + R + B_+$ where $B_\pm$ are the blue neighbours of $R$ and $|B_\pm|>|R|$. In $\R_2$, this corresponds to taking the closure of a sequence of intervals of the form $B_- + R_1 + B_1 + \dots + R_{2k+1} + B_+$ with $B_\pm$ being the longest two subintervals. As the shortest subinterval is internal, it can be recoloured reducing the number of intervals by two. Repeating this process and noting that the outermost intervals are both always longer than the sum of all the internal intervals, we see that we can perform a sequence of valid recolourings reducing this sequence to a single blue interval as required. 

Therefore, if blue wins in the new process on $\R_1$, then it does so on $\R_2$ as well; the result follows from this fact combined with Lemma~\ref{l:unique}.
\end{proof}

Although Lemma~\ref{l:bias} is useful, in practice, we need to approximate lengths, rather than  approximating the decisions on whether or not to recolour (while maintaining the exact lengths). A natural idea is to approximate the lengths by always underestimating the lengths of blue intervals and overestimating the lengths of red intervals. However, we naturally run into a problem when we recolour as we are always underestimating some of the constituent lengths and overestimating others. Thus, we cannot tell if the recoloured interval length is an overestimate or an underestimate.

One solution is to track, for each interval $I$, a range of possible lengths with $\ell_-(I)\le |I|\le \ell_+(I)$. We base the decision to recolour an interval $I$ on the underestimate $\ell_-(I)$ if $I$ is blue, and on the overestimate $\ell_+(I)$ is $I$ is red. If a blue interval $B$ is surrounded by red intervals $R_\pm$ that are possibly longer, so that $\ell_+(R_\pm)\ge \ell_-(B)$, then we recolour $B$. Similarly, if a red interval $R$ is surrounded by blue intervals $B_\pm$ that are definitely longer, so that $\ell_-(B_\pm)>\ell_+(R)$, then we recolour $R$. The minimum and maximum lengths of the resulting recoloured interval is then obtained by adding the minimum or maximum lengths of all the constituent intervals. In practice, the errors in the lengths of the intervals grows quickly, so this procedure can generally only be applied for a few steps before other methods are required.

To understand the evolution analytically, the following observation is crucial. Suppose that the minimum length of an interval is $L$ and occurs with positive probability $p$ in $\P_B$, say. Then recolouring all blue intervals of length $L$ results in a new colouring of $\R$ where the lengths of the red and blue intervals are still independent. The blue distribution is replaced by the same distribution conditioned on the length being greater than $L$, while the red distribution is replaced by the distribution of the random variable $\sum_{k=1}^{Y} X_i+ (Y-1)L$,
where the $X_i$ are i.i.d.\ random variables with distribution $\P_R$ and $Y$ is a (geometric) $\Geom(p)$ random variable. Indeed, it is easily seen that we just recolour any sequence of red intervals where all the intervening blue intervals are of length $L$, and as we travel along the line, these groups occur independently and include $\Geom(p)$ red intervals.

If the minimum length does not occur as an atom, one can discretise the distributions and use the length bounding approach described above. By making the discretisation finer and finer it is possible to track the result of recolouring all intervals of less than some length via a differential equation in terms of the distributions $\P_R$ and $\P_B$. 

If we write $f_R(x,t)$ and $f_B(x,t)$ for the probability density functions of the two distributions after all intervals of length at most $t$ have been eliminated, one obtains an evolution of the following form.
\begin{align}
 \frac{\partial}{\partial t} f_R(x,t)&= (f_R(t,t)-f_B(t,t))f_R(x,t)\notag\\
 &+\one_{\{x>3t\}}f_B(t,t)\int_t^{x-2t}f_R(z,t)f_R(x-t-z)\,dz,\notag\\
  \frac{\partial}{\partial t} f_B(x,t)&= (f_B(t,t)-f_R(t,t))f_B(x,t)\notag\\
 &+\one_{\{x>3t\}}f_R(t,t)\int_t^{x-2t}f_B(z,t)f_B(x-t-z)\,dz. \label{e:diff}
\end{align}
Indeed, in time $dt$, there is a density $f_B(t,t)dt$ of blue intervals that are recoloured red and the conditional distribution of the rest becomes $f_B(x,t)/(1-f_B(t,t)dt)$. On the other hand, the blue
intervals are grouped into blocks of $\Geom(f_R(t,t)dt)$ intervals separated by red blocks of length about $t$. To order $dt$, this is equivalent to replacing a fraction $f_R(t,t)dt$ blue intervals by random intervals of length $X_1+t+X_2$ where the distributions of $X_1$ and $X_2$ is $\P_B$.

Tracking the evolution of~\eqref{e:diff} seems impractical, particularly if one is interested in rigorous results. In particular, it is not clear what the `endgame' is as even if~\eqref{e:diff} could be tracked reasonably accurately up to some large time; the renormalisation argument fails to be of help as the distributions of the lengths typically never become concentrated enough. Unless we can run~\eqref{e:diff} up to infinity, it is not clear which colour wins. Thus, it is unclear whether anything can be deduced from just an approximate version of the probability distributions, and solving~\eqref{e:diff} exactly seems out of reach for any nontrivial pair of starting distributions.

To make progress, we therefore need some way of bounding the process without accurate information on the lengths of the intervals. As remarked above, it is not enough just to na{\"i}vely bound the lengths of the blue intervals from below and the lengths of the red intervals from above. However, it is possible make some headway if we use a more cautious method of
combining intervals.

To do this we first prove a bound on the `red-content' of a recoloured sequence. We call a coloured-interval $C$ a \emph{red-ended interval} if it starts and ends with a red subinterval; in other words, $C=R_1+B_1+R_2+\dots+B_{k-1}+R_k$ for some red intervals $R_i$, blue intervals $B_j$, and $k\ge1$. We note that if $C$ is red-ended, then so is its closure $[C]$. For a red-ended interval $C$, define $r(C)$ to be the total length of the red subintervals in the closure of $C$. 

\begin{lemma}\label{l:2r}
If $C_-$ and $C_+$ are red-ended intervals and $B$ is a blue interval, then we have $r(C_-+B+C_+)\le 2r(C_-)+2r(C_+)$.
\end{lemma}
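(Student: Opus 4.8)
\emph{Proof proposal.} The plan is to reduce at once to the case where $C_-$ and $C_+$ are closed, and then to induct on the total number of monochromatic subintervals of $C_-+B+C_+$, peeling off recolourings one at a time in the "shortest first" order permitted by Lemma~\ref{l:order}. All cases for the first recolouring reduce cleanly to a smaller instance \emph{except} the case where $B$ itself is recoloured; that case is the crux and requires a direct analysis of the ensuing dynamics together with a pairing argument that exploits the slack in the factor $2$.

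For the reduction, I would use the identity $[[C_-]+B+[C_+]]=[C_-+B+C_+]$ (noted right after Lemma~\ref{l:order}) together with the fact that $r(\cdot)$ only sees the closure, so that we may assume $C_-$ and $C_+$ are closed, i.e.\ have unimodal length sequences and, being red-ended, start and end with a red subinterval. Write $[C_-]=P_1+\dots+P_a$ and $[C_+]=Q_1+\dots+Q_b$ with alternating colours, $P_1,P_a,Q_1,Q_b$ red, and set $R_-=P_a$, $R_+=Q_1$ (the reds flanking $B$). Two trivialities get used repeatedly: (i) $|R_-|\le r(C_-)$ and $|R_+|\le r(C_+)$, each being a single summand of the relevant $r$; and (ii) a red-ended prefix or suffix $C'$ of a closed red-ended interval $C$ is itself closed with $r(C')\le r(C)$, since its red subintervals form a subset of those of $C$. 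Since $C_\pm$ are closed, the only recolourable intervals in $C_-+B+C_+$ are $R_-$, $R_+$, $B$. If none, then $C_-+B+C_+$ is already closed and $r(C_-+B+C_+)=r(C_-)+r(C_+)$. If the shortest one is $R_-$ (the case of $R_+$ being symmetric), it is recoloured blue by merging with the last blue of $[C_-]$ and with $B$ into a blue interval $\hat B$, leaving the strictly smaller instance $C_-'+\hat B+C_+$ with $C_-'=P_1+\dots+P_{a-2}$ a red-ended prefix of $C_-$; by (ii) and the induction hypothesis this is $\le 2r(C_-)+2r(C_+)$.

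The remaining case is that the shortest recolourable interval is $B$; this forces $|B|<|R_-|$ and $|B|<|R_+|$, so $R_\pm$ are not recolourable and we must merge $R_-+B+R_+$ into a red interval $\tilde R$. Here I would follow the process directly rather than invoke the induction hypothesis, since carrying $\tilde R$ through it would inflate the factor. Once $\tilde R$ is formed, the only further recolourings occur along the two monotone tails of $C_-$ and $C_+$ adjacent to $\tilde R$, namely $\dots>|P_{a-2}|>|P_{a-1}|>|P_a|$ and $\dots>|Q_3|>|Q_2|>|Q_1|$: they form a "zipper" that successively absorbs the blue subintervals $P_{a-1},P_{a-3},\dots$ (and the reds between them) and $Q_2,Q_4,\dots$ into the growing blob. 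This zipper cannot cross the peak of $C_-$ or of $C_+$, since at a peak the relevant neighbour of the blue to be recoloured fails to be longer; so every blue it absorbs lies on a strictly monotone branch. There are two terminal possibilities: either the blob is at some stage shorter than both of its (blue) neighbours and is recoloured blue, leaving a strictly smaller three-part instance $C_-''+\hat B+C_+''$ with $r(C_-'')\le r(C_-)$, $r(C_+'')\le r(C_+)$ by (ii) (finish by induction); or the zipper halts at a closed configuration $P_1+\dots+P_{k-1}+\rho+Q_j+\dots+Q_b$.

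In this last configuration no red of $C_-$ or $C_+$ was ever recoloured, so every original red survives, while $\rho$ additionally contains $B$ and precisely the blues the zipper absorbed; hence
\[
 r(C_-+B+C_+)=r(C_-)+r(C_+)+|B|+\Sigma_-+\Sigma_+,
\]
where $\Sigma_\pm$ is the total length of the absorbed blue subintervals of $C_\pm$. The key estimate is that the absorbed blues of $C_-$, namely $P_{a-1},P_{a-3},\dots$, are each strictly shorter than the red immediately on their outer side ($P_{a-2},P_{a-4},\dots$), that red being one step further up a monotone branch; these red partners are distinct and all distinct from $R_-=P_a$, so $\Sigma_-<r(C_-)-|R_-|$, and likewise $\Sigma_+<r(C_+)-|R_+|$. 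With $|B|<|R_-|$ this gives $|B|+\Sigma_-+\Sigma_+<|R_-|+(r(C_-)-|R_-|)+(r(C_+)-|R_+|)\le r(C_-)+r(C_+)$, whence $r(C_-+B+C_+)<2r(C_-)+2r(C_+)$. I expect the main obstacle to be exactly this $B$-recolouring case: correctly identifying that the post-merge dynamics is the monotone zipper, pinning down the terminal configurations, and setting up the pairing so that the factor-$2$ slack (which must absorb both $|B|$ and the flanking reds $|R_\pm|$) is exactly what is needed.
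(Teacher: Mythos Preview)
Your argument is correct. The reduction to closed $C_\pm$, the case where an inner red $R_\pm$ is recoloured first, the zipper analysis when $B$ is recoloured, and the pairing estimate $\Sigma_\pm\le r(C_\pm)-|R_\pm|$ all go through as you describe. In particular, each absorbed blue must have a strictly longer red on its outer side because that is a precondition for its recolouring, so the pairing is legitimate; and your two terminal subcases (blob turns blue, or the configuration becomes closed) do exhaust the possibilities, since at every stage only the blob and its two blue neighbours can be local minima.

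The paper takes a different and shorter route. It shares your reduction to closed $C_\pm$ and your $R_-$-recolouring case, but where you then track the dynamics via the zipper, the paper first performs an extra reduction (its Case~1): whenever the \emph{outer} end of $C_-$ has $|R_1|<|B_1|$, it strips off $R_1+B_1$ and applies induction, observing that prepending $R_1+B_1$ to the closure of the rest can never recolour any blue. After this stripping one may assume $C_-$ is strictly decreasing and $C_+$ strictly increasing, with $|B|$ smaller than both flanking reds; at that point the paper does not follow the evolution at all. Since every blue in the whole configuration is shorter than an adjacent red, the total red length is at least half the total length, and the trivial bound $r(C_-+B+C_+)\le|C_-+B+C_+|\le 2r(C_-)+2r(C_+)$ finishes in one line. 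Your zipper-and-pairing argument effectively reproves this inequality in the more general setting where the outer ends have not been stripped, at the cost of two terminal subcases and a second appeal to induction; what the paper's stripping step buys is the replacement of your entire crux case by a single total-length estimate.
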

\begin{proof}
We may assume without loss of generality that $C_-$ and $C_+$ are already closed. Hence, let $C_-=R_1+B_1+\dots+R_i$ with the lengths of the subintervals forming a unimodal sequence, and let $C_+ = R_{i+1}+B_{i+1}+\dots+R_{i+j}$ similarly. 

We prove the result by induction on $i+j$. If  $i+j\le2$, then $i=j=1$, and $C_-$ and $C_+$ are red intervals. Now, $C_-+B+C_+$ is closed if and only if $|B|>|C_-|$ or $|B|>|C_+|$; otherwise, $B$ can be recoloured red. Thus, 
\begin{align*}
r(C_-+B+C_+)&\le |C_-|+|C_+|+\min\{|C_-|,|C_+|\}\\
&\le 2|C_-|+2|C_+|=2r(C_-)+2r(C_+).
\end{align*}
Hence, we may assume $i+j>2$.

\textbf{Case 1: $i\ge 2$ and $|R_1|<|B_1|$.}
Let $C'_-=R_2+B_2+\dots+R_i$ and note that since $C'_-$ is closed, $r(C'_-) = r(C_-) - |R_1|$. We may assume inductively that $r(C'_-+B+C_+)\le 2r(C'_-)+2r(C_+)$. In $R_1+B_1+[C'_-+B+C_+]$, the only possible initial recolouring is of the first red interval of $[C'_-+B+C_+]$ which results in a blue interval containing $B_1$. However, $B_1$ can never be recoloured as $|B_1|>|R_1|$. Thus, although it is possible for red intervals to be recoloured, no blue intervals will ever be recoloured in the coalescence process on $R_1+B_1+[C'_-+B+C_+]$.
Thus, 
\begin{align*} 
r(C_-+B+C_+)&\le |R_1|+r(C'_-+B+C_+)\\
&\le 2(|R_1|+r(C'_-))+2r(C_+)\\
&=2r(C_-)+2r(C_+).
\end{align*}
A similar proof also holds for $j\ge 2$ and $|B_{i+j-1}|>|R_{i+j}|$. Hence, we may assume the lengths of the subintervals in $C_-$ are decreasing and the lengths of the subintervals in $C_+$ are increasing.

\textbf{Case 2: $i\ge 2$ and $|B|>|R_i|$.}
Let $C'_-=R_1+B_1+\dots+R_{i-1}$ and $B'=B_{i-1}+R_i+B$. Note that $[B']$ is blue and $r(C'_-) = r(C_-) - |R_i|$ since $C'_-$ is closed. Therefore, it follows that
\[ r(C_-+B+C_+)=r(C'_-+[B']+C_+)\le 2r(C'_-)+2r(C_+)\le 2r(C_-)+2r(C_+).\]
A similar proof also works if $|B| > |R_{i+1}|$, so we may assume that $|B|<|R_i|$ and $|B| < |R_{i+1}|$.

\textbf{Case 3: $|R_1|>\dots>|R_i|>|B|<|R_{i+1}|<\dots<|R_{i+j}|$.}
In this case, it is easy to check that the total length of the red intervals in $C_-+B+C_+$ is at least half the total length of $C_-+B+C_+$. Thus, 
\[r(C_-+B+C_+)\le |C_-+B+C_+|\le 2r(C_-)+2r(C_+). \qedhere\]
\end{proof}

The following is an easy corollary of Lemma~\ref{l:2r}.

\begin{corollary}\label{c:multi}
If $C_1, C_2, \dots,C_k$ are red-ended intervals and $B_1, B_2, \dots,B_{k-1}$ are blue intervals, then 
\[ r(C_1+B_1+ C_2+\dots+ B_{k-1}+C_k)\le 2^{\lceil\log_2k\rceil}\sum_{i=1}^k r(C_i).\]
\end{corollary}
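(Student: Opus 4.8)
The plan is to prove the corollary by strong induction on $k$, using Lemma~\ref{l:2r} just once at the top level and splitting the sequence in half. The base case $k=1$ is immediate, since $2^{\lceil\log_2 1\rceil}=1$ and the claimed inequality reads $r(C_1)\le r(C_1)$. For the inductive step, fix $k\ge 2$ and set $m=\lceil k/2\rceil$. I would write
\[
C^- = C_1+B_1+\dots+B_{m-1}+C_m, \qquad C^+ = C_{m+1}+B_{m+1}+\dots+B_{k-1}+C_k,
\]
and first observe that a concatenation of red-ended intervals interspersed with blue intervals is again red-ended, so both $C^-$ and $C^+$ are red-ended intervals; moreover the full concatenation in the statement is exactly $C^-+B_m+C^+$.

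Next I would apply Lemma~\ref{l:2r} to the triple $(C^-,B_m,C^+)$ to get
\[
r(C_1+B_1+\dots+B_{k-1}+C_k) = r(C^-+B_m+C^+) \le 2\,r(C^-) + 2\,r(C^+),
\]
and then invoke the inductive hypothesis on the $m$ red-ended intervals assembled into $C^-$ and on the $k-m$ red-ended intervals assembled into $C^+$ (note $1\le k-m\le m<k$ since $k\ge 2$), obtaining $r(C^-)\le 2^{\lceil\log_2 m\rceil}\sum_{i=1}^m r(C_i)$ and $r(C^+)\le 2^{\lceil\log_2(k-m)\rceil}\sum_{i=m+1}^k r(C_i)$.

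The remaining step is the bookkeeping on exponents: with $m=\lceil k/2\rceil$ one has $\max\{m,\,k-m\}=m\le 2^{\lceil\log_2 k\rceil-1}$ for every $k\ge 2$, so both $\lceil\log_2 m\rceil$ and $\lceil\log_2(k-m)\rceil$ are at most $\lceil\log_2 k\rceil-1$; substituting these bounds and combining the two sums yields
\[
r(C_1+B_1+\dots+B_{k-1}+C_k) \le 2\cdot 2^{\lceil\log_2 k\rceil-1}\sum_{i=1}^k r(C_i) = 2^{\lceil\log_2 k\rceil}\sum_{i=1}^k r(C_i),
\]
completing the induction. I do not expect any genuine obstacle here: the only delicate points are the (trivial) verification that $C^\pm$ are red-ended and the ceiling–logarithm arithmetic above. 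An equivalent way to organise the argument, if one prefers to avoid the recursion, is as a tournament bracket: repeatedly pair up adjacent red-ended blocks and merge each pair through its separating blue interval via Lemma~\ref{l:2r}, which at most doubles the total red-content and halves the number of blocks, so after $\lceil\log_2 k\rceil$ rounds a single block remains and the red-content has grown by a factor of at most $2^{\lceil\log_2 k\rceil}$.
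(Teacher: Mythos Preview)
Your proof is correct and follows essentially the same approach as the paper: induct on $k$, split the sequence into two red-ended halves, apply Lemma~\ref{l:2r} once at the top level, and recurse. The only cosmetic difference is the choice of split point---you take $m=\lceil k/2\rceil$ and verify $m\le 2^{\lceil\log_2 k\rceil-1}$, whereas the paper sets $t=\lceil\log_2 k\rceil$ and splits at $k'=2^{t-1}$, which makes the bound $\max\{k',k-k'\}\le 2^{t-1}$ immediate; either split gives the same inequality.
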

\begin{proof}
We use induction on $k$. The result for $k\le2$ follows from Lemma~\ref{l:2r}. Assume now that $k>2$ and write $t=\lceil\log_2k\rceil$ so that $k\le 2^t$. Take $k' = 2^{t-1}$, thereby ensuring that $1\le k',k-k'\le 2^{t-1}$. By induction, we know that 
\[ r(C_1+B_1+\dots+C_{k'})\le 2^{t-1}\sum_{i=1}^{k'}r(C_i)\] 
and that 
\[r(C_{k'+1}+ B_{k'+1} + \dots+C_k)\le 2^{t-1}\sum_{i=k'+1}^k r(C_i).\] 
Thus, it follows that
\begin{align*}
 r(C_1+ B_1 + \dots+C_k)&=r([C_1+\dots+C_{k'}]+B_{k'}+[C_{k'+1}+\dots+C_k])\\
 &\le 2r([C_1+\dots+C_{k'}])+2r([C_{k'+1}+\dots+C_k])\\
 &\le 2^t\sum_{i=1}^k r(C_i).\qedhere
\end{align*}
\end{proof}

Note that we cannot replace the factor of $2^{\lceil\log_2k\rceil} = \Theta(k)$ in Corollary~\ref{c:multi} by an absolute constant. Indeed, there is a simple construction similar to that of the Cantor set which demonstrates this. Start with a unit red interval, and replace (slightly less than) its middle third by a blue interval and inductively repeat this construction in the left and right red subintervals. Plainly, the closure of this sequence is entirely red. Hence, for each $\eps > 0$ and each $k\in \N$ which is a power of $2$, we have demonstrated the existence of red intervals  $R_1, R_2, \dots,R_k$ and blue intervals $B_1, B_2, \dots,B_{k-1}$ for which 
\[r(R_1+B_1+\dots+R_k)\ge (1-\eps)k^{\log_23}\sum_{i=1}^k r(R_i).\]

Finally, we need the following simple observation that complements Lemma~\ref{l:2r}.

\begin{proposition}\label{p:bb}
If $B_-$ and $B_+$ are blue intervals and $C$ is a red-ended interval such that $|B_\pm| > r(C)$, then the closure of $B_- + C + B_+$ is a blue interval.
\end{proposition}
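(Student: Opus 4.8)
The plan is to prove the statement by induction on the number of red subintervals in the closure $[C]$, using the order-independence of recolourings (Lemma~\ref{l:order}) to recolour a conveniently chosen short red interval first and then pass to a strictly smaller instance.

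First I would reduce to the case where $C$ is already closed: by the identity $[C_1+C_2]=[[C_1]+[C_2]]$ recorded after Lemma~\ref{l:order}, replacing $C$ by $[C]$ changes neither $r(C)$ nor the closure $[B_-+C+B_+]$, so we may assume $C=[C]$, say $C=R_1+B_1+R_2+\dots+B_{k-1}+R_k$ with the lengths of its monochromatic subintervals forming a strictly unimodal sequence (increasing then decreasing, by the characterisation of closed coloured-intervals). The base case $k=1$ is immediate: then $C=R_1$ with $|R_1|=r(C)$, so $R_1$ is flanked by the strictly longer blue intervals $B_-$ and $B_+$ and may be recoloured, leaving a single blue interval, which is closed.

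For the inductive step with $k\ge 2$, the key observation is that the minimum of a unimodal sequence is attained at one of its two endpoints, both of which here correspond to red subintervals; so, after possibly reflecting the coloured-interval $B_-+C+B_+$ (the hypotheses are symmetric in $B_-$ and $B_+$, and red-endedness and closedness are preserved under reversal, as is $r(\cdot)$), we may assume $R_1$ is the shortest of all the monochromatic subintervals of $C$. Since $k\ge 2$, unimodality then forces $|R_1|<|B_1|$, and $|R_1|<|R_1|+|R_2|+\dots+|R_k|=r(C)<|B_-|$; hence $R_1$ is surrounded by strictly longer blue intervals and may be recoloured first. By Lemma~\ref{l:order}, $[B_-+C+B_+]=[B_-'+C'+B_+]$, where $C'=R_2+B_2+\dots+R_k$ is a red-ended interval that is already closed with $r(C')=r(C)-|R_1|$, and $B_-'=B_-+R_1+B_1$ is a blue interval with $|B_-'|>|B_-|>r(C)>r(C')$; also $|B_+|>r(C)>r(C')$. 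The inductive hypothesis applied to $B_-'$, $C'$ and $B_+$, where $C'$ has one fewer red subinterval, shows that $[B_-'+C'+B_+]$ is a blue interval, completing the induction.

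I do not expect a serious obstacle here; the only points needing a little care are verifying that recolouring the shortest red endpoint interval first is a legitimate step and that the resulting configuration genuinely has strictly smaller red-content still bounded above by both flanking blue lengths, so that the induction closes. Both follow directly from unimodality of the length sequence of a closed coloured-interval together with the hypothesis $|B_\pm|>r(C)$.
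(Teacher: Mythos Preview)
Your argument is correct and follows essentially the same approach as the paper's proof: both reduce to the closed case, observe via unimodality that one of the two outermost red subintervals is shorter than its inner blue neighbour (and shorter than the flanking blue interval since $|B_\pm|>r(C)$), recolour it, and induct on the number of red subintervals using the invariant that the two outer blue intervals remain longer than the red-content of what lies between them. Your write-up is if anything more explicit than the paper's about verifying the inductive invariant after the recolouring.
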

\begin{proof}
We may assume, by replacing $C$ by its closure if necessary, that $C$ is closed. Now, taking the closure of $B_- + C+ B_+$ corresponds to taking the closure of a sequence of intervals of the form $B_- + R_1 + B_1 + \dots + R_{2k+1} + B_+$, where we know that $|B_\pm| > r(C) \ge |R_1|, |R_{2k+1}|$ and that either $|R_1|<|B_1|$ or $|R_{2k+1}|<|B_{2k}|$. It follows by induction that the red intervals in $C$ can be recoloured one at a time, from the outside in, noting that length of the two outermost blue intervals always exceeds the red-content of the red-ended interval between them. It is now clear that $[B_- + C + B_+]$ is a single blue interval, as required.
\end{proof}

We are now in a position to describe our strategy for following the coalescence process analytically. Suppose that we are given a colouring $\Delta$ of the line into intervals, along with a representation of the colouring as  
\[ \dots+C_{-1}+B_{-1}+C_0+B_0+C_1+B_1+\dots, \]
where $C_i$ is a red-ended interval and $B_i$ is a blue interval for each $i \in \Z$. Suppose also that we are given two sequences of bounds $(\ell(C_i))_{i \in \Z}$ and $(\ell(B_i))_{i \in \Z}$ such that $\ell(C_i)\ge r(C_i)$ and $\ell(B_i)\le |B_i|$ for each $i \in \Z$. 

We fix a constant $\ell_0>0$ and update the given colouring and the associated bounds as follows. We first coalesce all the blue intervals $B$ with $\ell(B) \le \ell_0$ with the surrounding red-ended intervals so that we no longer have blue intervals $B$ with $\ell(B) \le \ell_0$. More precisely, if we encounter a sequence 
\[ B_{i-1}+C_i+\dots+C_j+B_{j}\] 
in the colouring with $\ell(B_{i-1}), \ell(B_{j}) > \ell_0$ and $\ell(B_k) \le \ell_0$ for every $i \le k < j$, then we replace the sequence $C_i+\dots+C_j$ in the colouring by its red-ended closure $C = [C_i+\dots+C_j]$. In the updated colouring, we set 
\[ \ell(C) = 2^{\lceil\log_2 (j-i+1)\rceil}\sum_{k=i}^j\ell(C_k),\]
noting that Corollary~\ref{c:multi} ensures that $\ell(C) \ge r(C)$. In the resulting colouring, we no longer have blue intervals $B$ with $\ell(B) \le \ell_0$. We update this resulting colouring again as follows: all red-ended intervals $C$ with $\ell(C) \le \ell_0$ are coalesced with the surrounding blue intervals. More precisely, if we have a sequence, 
\[ C_i+B_{i}+\dots+B_j+C_{j+1} \] 
in the colouring with $\ell(C_i), \ell(C_{j+1}) > \ell_0$ and $\ell(C_k) \le \ell_0$ for each $i < k \le j$, then we replace the sequence $B_i+\dots+B_j$ by its closure $B$. It is easy to see from Proposition~\ref{p:bb} that $B$ is a monochromatic blue interval since $|B_k| \ge \ell(B_k) > \ell_0$ for each $i \le k \le j$ and $r(C_k) \le \ell(C_k) \le \ell_0$ for $i+1 \le k \le j$; in the updated colouring, we then set
\[ \ell(B) = \sum_{k=i}^j\ell(B_k),\]
noting that this ensures that $|B| \ge \sum_{k=i}^j|B_k| \ge \sum_{k=i}^j\ell(B_k) = \ell(B)$.
This completes our update of the original colouring and the associated bounds. Note that we have ensured that  $\ell(C)\ge r(C)$ and $\ell(B)\le |B|$ for each red-ended interval $C$ and each blue interval $B$ in the updated colouring; furthermore, note that we also have $\ell(C), \ell(B) > \ell_0$ for each red-ended interval $C$ and each blue interval $B$ in the updated colouring.

We shall refer to this approach to updating a given colouring $\Delta$ and the associated bounds $\ell(.)$ as the \emph{$\ell$-bounding argument (with threshold $\ell_0$)}. Let us note that if the original sequences of bounds $(\ell(C_i))_{i \in \Z}$ and $(\ell(B_i))_{i \in\Z}$ are i.i.d.\ sequences of random variables, then the two new sequences of bounds obtained after updating the colouring using the $\ell$-bounding argument also have the same property, though of course, the two new sequences might now be distributed according to a different pair of distributions; note in particular that the law of the updated sequences of bounds continues to be shift-invariant in this case. Let us also observe that the $\ell$-bounding argument is stable under stochastic domination, so we are free to replace the bounds $\ell(C_i)$ by anything stochastically larger, and the bounds $\ell(B_i)$ by anything stochastically smaller when we use the argument.

We also note that the coalescence process is unaffected by scaling the length of every interval by any positive constant. In particular, if both $\P_R$ and $\P_B$ have support bounded away from 0, then we may assume without loss of generality  that they both have support contained in $[1, \infty)$. Furthermore, if we use the $\ell$-bounding argument to remove all intervals of length less than, say $1+\eps$, we can then divide the length of each interval by $1+\eps$ without altering the future evolution of the process. We will use this idea to make later calculations more tractable.

\section{Proof of the main result}\label{s:main}

In this section, we use the $\ell$-bounding argument to prove our main result, Theorem~\ref{t:main}. Recall the distribution $\Scr{G}(a)$ with density function $2(a+1)^2/(a+x)^3$ for all $x\ge 1$. Our proof of Theorem~\ref{t:main} hinges on the following lemma.

\begin{lemma}\label{l:reddom}
There exist $0<\eps_0<1$ and $\Lambda<14$ such that for all $0\le a\le 1$ and $0<\eps\le \eps_0$, the random variable
\[ 2^{\lceil\log_2Y\rceil}\sum_{i=1}^YX_i\] 
is stochastically dominated by a $\Scr{G}(a+\Lambda\eps)$ random variable, where the $X_i$ and $Y$ are independent random variables such that the distribution of $Y$ is $\Geom(\eps)$ and that of $X_i$ is $\Scr{G}(a)$ for each $1 \le i \le Y$.
\end{lemma}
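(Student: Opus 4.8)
The plan is to reduce to a statement about survival functions and then condition on $Y$. Since $Z := 2^{\lceil\log_2 Y\rceil}\sum_{i=1}^Y X_i \ge 1$ almost surely, the asserted stochastic domination is equivalent to
\[
\P(Z \ge x) \le \frac{(a+1+\Lambda\eps)^2}{(a+x+\Lambda\eps)^2}\qquad\text{for all } x > 1 .
\]
Writing $S_k = X_1+\dots+X_k$ and $m_k = 2^{\lceil\log_2 k\rceil}$ and using $\P(Y=k)=(1-\eps)\eps^{k-1}$, we have
\[
\P(Z \ge x) = \sum_{k\ge1}(1-\eps)\eps^{k-1}\,\P\bigl(S_k \ge x/m_k\bigr) .
\]
The term $k=1$ equals $(1-\eps)(a+1)^2/(a+x)^2$, that is, $(1-\eps)$ times the target evaluated at $\eps=0$; since $\Scr{G}(b)$ is stochastically increasing in $b$, what remains is to absorb the terms with $k\ge2$ --- each carrying a factor $\eps^{k-1}$ --- into the ``budget'' of size $O(\eps)$ provided jointly by the factor $(1-\eps)$ and by the gain from shifting the parameter from $a$ to $a+\Lambda\eps$. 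Writing $\phi(b)=(b+1)^2/(b+x)^2$, this budget is
\[
\frac{(a+1+\Lambda\eps)^2}{(a+x+\Lambda\eps)^2}-(1-\eps)\frac{(a+1)^2}{(a+x)^2}
=\int_a^{a+\Lambda\eps}\frac{2(b+1)(x-1)}{(b+x)^3}\,db+\eps\,\frac{(a+1)^2}{(a+x)^2},
\]
and the whole problem becomes: show this dominates $\sum_{k\ge2}(1-\eps)\eps^{k-1}\P(S_k\ge x/m_k)$ for every $x>1$, $a\in[0,1]$ and $\eps\le\eps_0$.

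To control the right-hand side I would assemble tail estimates for $S_k$. We always have $\P(S_k\ge x/m_k)\le1$, with equality when $x\le k m_k$ since $S_k\ge k$; and since $\max_iX_i\ge S_k/k$, a union bound gives $\P(S_k\ge s)\le\min\{1,\,k(a+1)^2/(a+s/k)^2\}$. For $k\ge3$ these crude bounds already suffice, and in particular $\sum_{k\ge3}(1-\eps)\eps^{k-1}\P(S_k\ge x/m_k)\le\P(Y\ge3)=\eps^2$, an acceptable $O(\eps^2)$ error at every $x$. The delicate term is $k=2$: the union bound $\P(S_2\ge s)\le 2(a+1)^2/(a+s/2)^2$ loses a constant factor in the tail and is useless for moderate $s$, both of which regimes matter here. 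Conditioning on $X_2$ and integrating against the density $2(a+1)^2/(a+y)^3$ of $\Scr{G}(a)$ gives the exact identity
\[
\P(S_2\ge s)=\frac{(a+1)^2}{(a+s-1)^2}+2(a+1)^4\!\int_1^{s-1}\!\frac{dy}{(a+s-y)^2(a+y)^3},
\]
whose integral is a combination of logarithms and rational functions (by partial fractions), and from which one extracts a bound $\P(S_2\ge s)\le C_2(a+1)^2/(a+s-1)^2$, valid for all $s\ge2$ and $a\in[0,1]$, with an explicit absolute constant $C_2$ (a value near $3$).

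I would then verify the reduced inequality by splitting the range of $x$. For $1\le x\le4$ every $\P(S_k\ge x/m_k)$ with $k\ge2$ equals $1$, so $\sum_{k\ge2}(1-\eps)\eps^{k-1}\P(S_k\ge x/m_k)=\eps$ exactly, and (dividing by $x-1$) the inequality reduces to
\[
\int_a^{a+\Lambda\eps}\frac{2(b+1)}{(b+x)^3}\,db\ \ge\ \eps\,\frac{2a+x+1}{(a+x)^2},
\]
which holds with room to spare for $\Lambda<14$ once $\eps_0$ is small enough (as $\eps\to0$ it only needs $\Lambda\ge10$, the maximum of $(2a+x+1)(a+x)/(2(a+1))$ over $x\in[1,4]$, $a\in[0,1]$). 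For a bounded middle range $4\le x\le X_0$, with $X_0$ an absolute constant, one feeds in the near-exact control of $\P(S_2\ge x/2)$ together with the $O(\eps^2)$ bound on the $k\ge3$ terms and checks the resulting inequality in $(x,a,\eps)$. Finally, for $x\ge X_0$ the target behaves like $(a+1+\Lambda\eps)^2/x^2$, the ``floor'' contribution $\sum_{k>\sqrt{x/2}}(1-\eps)\eps^{k-1}$ is super-polynomially small in $x$ (hence negligible for $\eps_0$ small), and the bounds $\P(S_2\ge x/2)\le C_2(a+1)^2/(a+x/2-1)^2$ and $\sum_{k\ge3}(\cdots)\le\eps^2$ reduce the comparison to an elementary inequality between the coefficients of $x^{-2}$.

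The hard part is the middle range. Near $x\approx5$ one genuinely needs $\P(S_2\ge x/2)$ almost exactly: the bounds $\P(S_2\ge x/2)\le1$ and $\P(S_2\ge x/2)\le C_2(a+1)^2/(a+x/2-1)^2$ are both too lossy there and would force $\Lambda>14$. So one either analyses the closed form for $\P(S_2\ge s)$ by hand over this compact set of parameters, or verifies the finitely many resulting numerical inequalities rigorously (e.g.\ by interval arithmetic), in the spirit of the other quantitative estimates of the paper. A related subtlety is uniformity: the leading-order-in-$\eps$ version of the inequality is tight at some $x\approx5$, so $\Lambda$ must strictly exceed the supremum over $x>1$ and $a\in[0,1]$ of the corresponding ratio (numerically around $11.7$), and $\eps_0$ must be chosen small enough that the $O(\eps^2)$ corrections --- the terms with $k\ge3$ and the curvature of $\phi$ --- fit inside the resulting slack; the bound $\Lambda<14$ is what accommodates this together with the tail estimate.
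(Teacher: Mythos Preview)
Your approach is essentially identical to the paper's: condition on $Y$, split at $x=4$ (using $Z\ge4$ whenever $Y\ge2$, which gives the constraint $\Lambda>10$), and for $x>4$ reduce to a leading-order-in-$\eps$ inequality driven by the $k=2$ term, evaluate $\P(S_2\ge x/2)$ in closed form, handle large $x$ analytically, and verify the remaining compact region $(a,x)\in[0,1]\times[4,X_0]$ by rigorous computation. The paper carries this out with $X_0=100$ and obtains $\Lambda=13.06207$ (your estimate ``around $11.7$'' for the supremum is a bit low, but this does not affect the strategy or the conclusion $\Lambda<14$).
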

\begin{proof}
Let $Z=2^{\lceil\log_2Y\rceil}\sum_{i=1}^YX_i$ and let $W$ be a random variable with distribution $\Scr{G}(a+\Lambda\eps)$. We need to show that for all $x\ge1$,
$\P(Z\ge x)\le \P(W\ge x)$.

We start by estimating $\P(W\ge x)$. Observe that
\begin{align*}
 \left(\frac{\P(W\ge x)}{\P(X_1\ge x)}\right)^{1/2}
 &=\frac{a+\Lambda\eps+1}{a+\Lambda\eps+x} \times \frac{a+x}{a+1}\\
 &=1+\frac{\Lambda\eps(x-1)}{(a+\Lambda\eps+x)(a+1)}.
\end{align*}
Hence, it follows that
\begin{equation}\label{e:w}
 \P(W\ge x)=\P(X_1\ge x)
 \left(1+\frac{\Lambda\eps(x-1)}{(a+\Lambda\eps+x)(a+1)}\right)^2.
\end{equation}

Now assume $x\in[1,4]$. As $X_i\ge1$, $Z\ge4$ whenever $Y>1$. Thus, 
\[ \P(Z\ge x)=(1-\eps)\P(X_1\ge x)+\eps,\] 
so we need to show that
\[
 1-\eps+\eps\frac{(a+x)^2}{(a+1)^2}\le
 \left(1+\frac{\Lambda\eps(x-1)}{(a+\Lambda\eps+x)(a+1)}\right)^2
\]
or equivalently
\[
 1+\eps\frac{(x-1)(2a+1+x)}{(a+1)^2}\le 1+2\frac{\Lambda\eps(x-1)}{(a+\Lambda\eps+x)(a+1)}
 +\frac{\Lambda^2\eps^2(x-1)^2}{(a+\Lambda\eps+x)^2(a+1)^2}.
\]
Simplifying, it is enough to show that
\[
 \frac{2a+1+x}{(a+1)^2}\le \frac{2\Lambda}{(a+\Lambda\eps+x)(a+1)}
 +\frac{\Lambda^2\eps(x-1)}{(a+\Lambda\eps+x)^2(a+1)^2}.
\]
As the region $[1,4]\times[0,1]$ of possible values of $(x,a)$ is compact, and since any bound we obtain on $\eps$ will be continuous, it is enough to prove pointwise that for any such $(x,a)$, there is a sufficiently small $\eps$ that satisfies this inequality. As a result it is enough to show that
\[
 \frac{2a+1+x}{(a+1)^2}<\frac{2\Lambda}{(a+x)(a+1)}.
\]
This reduces to the inequality $(2a+1+x)(a+x)<2\Lambda(a+1)$ which holds for all $x\le 4$ if $(2a+5)(a+4)<2\Lambda(a+1)$. This in turn holds for all $a\in[0,1]$ when $\Lambda>10$.

Assume now that $x>4$. We give a proof for $\Lambda=52$ and indicate at the end how to reduce this bound to $\Lambda = 13.06207 <14$.

If $Y\ge2$, then $2^{\lceil\log_2Y\rceil}\le 2(Y-1)$. Hence, if $Z\ge x$ and $Y\ge2$, then at least one of $X_1, X_2, \dots, X_Y$ is greater than $x/(2Y(Y-1))$. Thus
\begin{align}
 \P(Z\ge x)&\le (1-\eps)\P(X_1\ge x)
 +\sum_{k=2}^\infty(1-\eps)\eps^{k-1}k\P(X_i\ge x/(2k(k-1)))\notag\\
 &\le (1-\eps)\P(X_1\ge x)
 +\sum_{k=2}^\infty(1-\eps)\eps^{k-1}k\frac{(a+1)^2}{(a+x/(2k(k-1)))^2}\notag\\
 &\le (1-\eps)\P(X_1\ge x)
 +\sum_{k=2}^\infty(1-\eps)\eps^{k-1}4k^3(k-1)^2\frac{(a+1)^2}{(a+x)^2}\notag\\
 &= (1+31\eps+O(\eps^2))\P(X_1\ge x).\label{e:z}
\end{align}
Now $a\le 1$ and $x\ge 4$, so $(x-1)/((a+x)(a+1))\ge 3/10$. Thus, by~\eqref{e:w}, we have
\begin{equation}\label{e:w2}
 \P(W\ge x)= (1+3\Lambda\eps/5+\Omega(\Lambda^2\eps^2))\P(X_1\ge x).
\end{equation}
Moreover, the constants implicit in the asymptotic notation in both~\eqref{e:z} and~\eqref{e:w2} are absolute constants independent of $a$ and $x$ for any $\eps<1$. Thus, for $\Lambda=52>31(5/3)$, we have $\P(Z\ge x)\le \P(W\ge x)$ for all $x\ge 4$ when $\eps$ is sufficiently small. 

To improve this bound, it is clearly enough to ensure the factor in front of the $\eps$ in~\eqref{e:w2} is larger that the factor in front of the $\eps$ in~\eqref{e:z}. Thus, we need that for some fixed $\delta>0$,
\begin{equation}\label{e:1}
 \frac{\P(X_1+X_2\ge x/2)}{\P(X_1\ge x)}-1+\delta \le\frac{2(x-1)\Lambda}{(a+1)(a+x)}
\end{equation}
for all $a\in[0,1]$ and $x\ge4$. We first evaluate $\P(X_1+X_2\ge x/2)$ in closed form for $x\ge 4$. This allows us to avoid numerical integration and ensures fast and accurate calculation of both sides of~\eqref{e:1}. It is not difficult to show that
\begin{multline*}
 \P(X_1+X_2\ge x/2)
 =\frac{8(a+1)^2((4a+x)^2+6(a+1)(x-4))}{(4a+x)^3(2a+x-2)}
 \\+\frac{192(a+1)^4}{(4a+x)^4}\log\left(\frac{2a+x-2}{2a+2}\right).
\end{multline*}

Next, note that $\P(X_i\ge x)$ is not only monotone in $x$, it is also increasing in $a$; also, the same is true of
$\P(X_1+X_2\ge x/2)$. Thus, when $(a,x)\in[a_1,a_2]\times[x_1,x_2]$, we can bound terms such as $\P(X_1\ge x)$ and $\P(X_1+X_2\ge x/2)$ by evaluating them at the points $(a_1,x_2)$ (for a lower bound) and $(a_2,x_1)$ (for an upper bound). We can bound the right-hand side of~\eqref{e:1} in a similar fashion as this expression is increasing in $x$ and decreasing in $a$.

We first deal with the case when $x$ is large: let us suppose that $x \ge x_0 = 100$. Recall that $a\in[0,1]$ and $\log(1+z)\le z$, so
\begin{align*}
 \frac{\P(X_1+X_2\ge x/2)}{\P(X_1\ge x)}
 &=\frac{8(a+x)^2((4a+x)^2+6(a+1)(x-4))}{(4a+x)^3(2a+x-2)}\\
 &\qquad+\frac{192(a+1)^2(a+x)^2}{(4a+x)^4}\log\left(1+\frac{x-2}{2a+2}\right)\\
 &\le \frac{8((4+x)^2+12(x-4))}{x(x-2)}+\frac{192}{x^4}(x-2)\\
 &\le 8+\frac{176}{x-2}+\frac{192}{x^3}\le 10
\end{align*}
for $x\ge x_0$. On the other hand,
\[
 \frac{2(x-1)\Lambda}{(a+1)(a+x)}\ge \frac{(x-1)\Lambda}{(x+1)}\ge 10
\]
for $x\ge x_0$ and any $\Lambda\ge 11$. Hence, it is clear that~\eqref{e:1} holds (with any $\delta \le 1$) when $x\ge x_0$.

We may now verify~\eqref{e:1} on $[0,1]\times[4,100]$ using a computer. We do this by establishing~\eqref{e:1} on a finite collection of subrectangles that partition $[0,1]\times[4,100]$; our program that does this proceeds as follows. On any rectangle within $[0,1]\times[4,100]$, we can bound both sides of~\eqref{e:1} as described above: if the required inequality on the whole rectangle follows from these bounds, our program certifies that the inequality holds on this rectangle, and if the required inequality cannot be shown to hold on the whole rectangle using the bounding strategy described above, then our program recursively divides such a rectangle into two smaller rectangles, and checks both of these subrectangles using the same strategy.

Our program verified~\eqref{e:1} for all $(a,x)\in[0,1]\times[4,100]$ with $\Lambda = 13.06207 < 14$; we set $\delta=10^{-10}$ to (very generously) allow for floating point inaccuracies and to provide the uniform bound needed in~\eqref{e:1}.
\end{proof}

We are now ready to prove our main result.

\begin{proof}[Proof of Theorem~\ref{t:main}]
Recall that we would like to show that if we have $\lambda > \Lambda = 13.06207$, $a \in [0,1)$, and probability distributions $\P_R$ and $\P_B$ such that $\P_B \succcurlyeq \Scr{F}(\lambda)$ and $\Scr{G}(a) \succcurlyeq \P_R$, then the outcome of a complete sequence of recolourings applied to $\Delta = \Delta(\P_R, \P_B)$ is almost surely a blue-win.

Recall that $\Delta$ is constructed as follows: take two i.i.d.\ sequences $(\CR_i)_{i \in \Z}$ and $(\CB_i)_{i\in\Z}$ of random variables with distributions $\P_R$ and $\P_B$ respectively, and then let $\Delta$ be the colouring of the real line given by
\[\dots +R_{-1}+B_{-1}+R_0+B_0+R_1+B_1+\dots,\]
where $R_i$ is a red interval with $|R_i|= \CR_i$ and $B_i$ is a blue interval with $|B_i| = \CB_i$ for each $i\in \Z$, and the origin is the boundary-point between $R_0$ and $B_0$. We shall construct, for each $t \ge 0$, a colouring $\Delta_t$ of the line into intervals and a representation of the colouring as
\[ \dots+C^{(t)}_{-1}+B^{(t)}_{-1}+C^{(t)}_0+B^{(t)}_0+C^{(t)}_1+B^{(t)}_1+\dots, \]
where  $C^{(t)}_i$ is a red-ended interval and $B^{(t)}_i$ is a blue interval for each $i \in \Z$. We shall also maintain, for each $t \ge 0$, a collection of bounds $\ell_t(.)$ such that $\ell_t(C^{(t)}_i)\ge r(C^{(t)}_i)$ and $\ell_t(B^{(t)}_i)\le |B^{(t)}_i|$ for each $i \in \Z$.

We start by taking $\Delta_0 = \Delta$, with $C^{(0)}_i = R_i$ and $B^{(0)}_i = B_i$. We then define our initial sequence of bounds as follows. Couple the lengths of the red intervals with an i.i.d.\ sequence $(\ell_0(R_i))_{i \in \Z}$ of $\Scr{G}(a)$ random variables so that $\ell_0(R_i)\ge |R_i| = r(R_i)$ for each $i \in \Z$, and couple the lengths of the blue intervals with an i.i.d.\ sequence $(\ell_0(B_i))_{i \in \Z}$ of $\Scr{F}(\lambda)$ random variables so that $\ell_0(B_i)\le |B_i|$ for each $i \in \Z$. Note also that this coupling can be done independently for each interval.

We get $\Delta_{t+1}$ from $\Delta_t$ by first coalescing some intervals and then updating the bounds for these intervals using the $\ell$-bounding argument described in the previous section, and subsequently rescaling the lengths of all the intervals in the resulting colouring. Since the initial bounds $(\ell_0(R_i))_{i \in \Z}$ and $(\ell_0(B_i))_{i \in\Z}$ are both i.i.d.\ sequences of random variables, it follows by induction that for each $t \ge 0$, the bounds $(\ell_t(C^{(t)}_i))_{i\in\Z}$ and $(\ell_t(B^{(t)}_i))_{i\in\Z}$ are also both i.i.d.\ sequences of random variables. We shall track the distributions of the bounds $\ell_t(.)$ to show that blue wins. In particular, we shall use the fact that the $\ell$-bounding argument is stable under stochastic domination to show that there are a pair of sequences $(\lambda_t)_{t \ge 0}$ and $(a_t)_{t \ge 0}$ of positive reals, with $\lambda_0 = \lambda$ and $a_0 = a$, such that the distributions of the i.i.d.\ sequences  $(\ell_t(C^{(t)}_i))_{i\in\Z}$ and $(\ell_t(B^{(t)}_i))_{i\in\Z}$   are $\Scr{G}(a_t)$ and  $\Scr{F}(\lambda_t)$ respectively. We will see that the sequence $(\lambda_t)_{t \ge 0}$ grows exponentially, whereas each $a_t$ is bounded away from $1$; the result will follow from this fact.

Since $a<1$ and $\lambda > \Lambda$, we may choose $\delta > 0$ such that $a \le 1-\delta$ and $\lambda \ge \Lambda/(1-\delta)$. Next, we fix an $\eps > 0$ such that $\eps < \min{(\delta/2, \eps_0, 1/10)}$, where $\eps_0$ is as in the statement of Lemma~\ref{l:reddom}. Since $2\eps < \delta \le 1$, it is easily verified that
\begin{equation}\label{e:epsdel}
\frac{(2-\delta + 2\eps)^2}{(2-\delta + \eps)^2(1+\eps)} \ge \frac{4}{(2-\eps)^2(1+\eps)} \ge 1 + \frac{\eps^2}{2}.
\end{equation}

For each $t \ge 0$, we construct $\Delta_{t+1}$ from $\Delta_t$ by first using the $\ell$-bounding argument with threshold $1 + \eps$, and then scaling down the lengths of all the intervals by a factor of $1+\eps$.

More precisely, given $\Delta_t$, we construct $\Delta_{t+1}$ by performing the following sequence of recolourings. First coalesce all the blue intervals $B$ with $\ell_t(B)\le 1+\eps$ with the surrounding red-ended intervals. After this recolouring, we see (by induction) that the estimates for the lengths of the blue intervals are i.i.d.\ random variables with distribution $1+\Exp(\lambda_t)$ conditioned on being at least $1+\eps$. Thus, they are distributed as $1+\eps+\Exp(\lambda_t)$ since conditioning an exponential random variable to be larger than some constant is equivalent to adding that constant. Let
\[
 \zeta_t=1-\exp{(-\eps/\lambda_t)}<\frac{\eps}{\lambda_t}
\]
be the probability that a blue interval is recoloured. Then each new red-ended interval $C$ is formed from $Y$ consecutive red-ended intervals $C_1, C_2, \dots,C_Y$, where $Y$ has distribution $\Geom(\zeta_t)$, independently for each resulting red-ended interval $C$. We combine the length bounds as in the $\ell$-bounding argument using Corollary~\ref{c:multi} and set 
\[
 \ell_{t+1}(C)=2^{\lceil\log_2Y\rceil}\sum_{i=1}^Y\ell_t(C_{i}).
\]
By Lemma~\ref{l:reddom}, this is stochastically bounded above by a $\Scr{G}(a_t+\Lambda\zeta_t)$ distribution. By increasing the  values of these bounds if necessary, we can assume that these bounds are now i.i.d.\ random variables with distribution $\Scr{G}(a_t+\Lambda\zeta_t)$.

Each surviving blue interval now has length strictly greater than $1+\eps$. We now recolour those red-ended intervals $C$ with $\ell_{t+1}(C)\le 1+\eps$. The probability that this occurs for a given red-ended interval is
\[
 \xi_t=1-\frac{(a_t+\Lambda\zeta_t+1)^2}{(a_t+\Lambda\zeta_t+1+\eps)^2}
 =\frac{\eps(2a_t+2\Lambda\zeta_t+2+\eps)}{(a_t+\Lambda\zeta_t+1+\eps)^2}.
\]
Each new blue interval $B$ is formed from $Y$ consecutive blue intervals $B_1, B_2, \dots,B_Y$, where $Y$ has distribution $ \Geom(\xi_t)$, independently for each resulting blue interval. We set
\[
\ell_{t+1}(B) = \sum_{i=1}^Y\ell_t(B_i)
\] 
as in the $\ell$-bounding argument. As the sum of $\Geom(\xi_t)$ independent $\Exp(\lambda)$ random variables is an $\Exp(\lambda/(1-\xi_t))$ random variable, the resulting bounds for the lengths of the blue intervals stochastically dominate a $1+\eps+\Exp(\lambda_t/(1-\xi_t))$ random variable; by decreasing the values of these bounds if necessary, we can assume that these bounds are now i.i.d.\ random variables with distribution $1+\eps+\Exp(\lambda_t/(1-\xi_t))$. Finally, note that the estimates for the red-content of the surviving red-ended intervals are now distributed according to the distribution of a random variable with distribution $\Scr{G}(a_t+\Lambda\zeta_t)$ that is conditioned on being at least $1+\eps$. This is easily seen to be $1+\eps$ times a random variable with distribution $\Scr{G}((a_t+\Lambda\zeta_t)/(1+\eps))$. 

We now get $\Delta_{t+1}$ by scaling down the lengths of all the intervals (and our estimates for these) by a factor of $1+\eps$. Of course, we have not specified how the red-ended intervals and blue intervals are indexed in $\Delta_{t+1}$. It is however clear (by induction) that the law of $\Delta_{t+1}$ is shift-invariant, so the precise choice of origin clearly does not affect the outcome of the evolution; therefore, we arbitrarily choose a red-ended interval $C$ and a blue interval $B$ in $\Delta_{t+1}$ such that $C$ is immediately to the left of $B$, and then designate $C$ and $B$ to be $C^{(t+1)}_0$ and $B^{(t+1)}_0$ respectively.

We now know that the i.i.d.\ sequences  $(\ell_t(C^{(t+1)}_i))_{i\in\Z}$ and $(\ell_t(B^{(t+1)}_i))_{i\in\Z}$   have distributions $\Scr{G}(a_{t+1})$ and $\Scr{F}(\lambda_{t+1})$ respectively, where
\[
a_{t+1}= \frac{a_t+\Lambda\zeta_t}{1+\eps},
\]
and
\[
\lambda_{t+1}=\frac{\lambda_t}{(1-\xi_t)(1+\eps)}.
\]
We shall show by induction that $a_t \le 1-\delta$ and $\lambda_t \ge (1+\eps^2/2)^t \lambda$  for each $t\ge 0$. Indeed, inductively assume that this is true of $a_t$ and $\lambda_t$. Since $\zeta_t < \eps/\lambda_t \le \eps / \lambda$ and $\Lambda/\lambda \le 1-\delta$, it follows that
\[
 a_{t+1}= \frac{a_t+\Lambda\zeta_t}{1+\eps} \le \frac{a_t+\Lambda\eps/\lambda}{1+\eps} \le \frac{1-\delta + (1-\delta)\eps}{1+\eps} = 1-\delta.
\]
Next, using~\eqref{e:epsdel}, we note that
\begin{align*}
\frac{\lambda_{t+1}}{\lambda_t} = \frac{1}{(1-\xi_t)(1+\eps)} &= \frac{(a_t + \Lambda\eps/\lambda_t + 1 + \eps)^2}{(a_t + \Lambda\eps/\lambda_t + 1)^2(1+\eps)}\\
&\ge \frac{(2-\delta + 2\eps)^2}{(2-\delta + \eps)^2(1+\eps)}\ge 1 + \frac{\eps^2}{2}.
\end{align*}
Thus, the sequence $(\lambda_t)_{t \ge 0}$ grows exponentially, whereas the sequence $(a_t)_{t \ge 0}$ is bounded away from 1.

We now finish the proof as in the proof of Theorem~\ref{t:renorm} by showing that there is a positive probability that there exists a point which only changes colour a finite number of times and is ultimately blue; the existence of any such point clearly precludes a red-win or a tie, and therefore implies that blue wins almost surely. 

We say that a point is blue \emph{externally} in $\Delta_t$ if the point is blue, but not contained within one of the red-ended intervals of $\Delta_t$. Since the law of $\Delta_t$ is shift-invariant for each $t \ge 0$,  the probability that an externally blue point of $\Delta_t$ is no longer externally blue in $\Delta_{t+1}$ is the same for each externally blue point of $\Delta_t$; furthermore, this common probability is at most $\zeta_t <\eps/\lambda_t$. It follows that the probability that any given externally blue point of $\Delta_T$ remains externally blue in each $\Delta_{t}$ with $t > T$ is at least $1 - \sum_{i > T} \eps/\lambda_i$. Now, choose $T \ge 0$ to be large enough so that $\sum_{i > T} \eps/\lambda_i < 1$; this is possible since the sequence $(\lambda_t)_{t \ge 0}$ grows exponentially. It is now clear that any given externally blue point of $\Delta_T$ is never recoloured again with positive probability; it follows that blue wins almost surely, as required.
\end{proof}

\section{A comparison of two simple distributions}\label{s:application}
In this section, we consider as an example the case where the initial lengths of all the red intervals are deterministically $1$ and those of the blue intervals are distributed according to the (uniform) $U[0,1+\gamma]$ distribution with $\gamma \ge 0$.

Before we turn to the proof of Theorem~\ref{t:toy}, we make the following simple observation.

\begin{proposition}\label{c:boundbygeom}
Let $p>0$ and $0 < a <b$. If $X$ and $Y$ are random variables with distributions $\Geom(p)\circ U[a,b]$ and $\Geom(p)$ respectively, then
\[
 \P(X\ge x)\ge\frac12\P\left(Y\ge \frac{2x}{a+b}\right).
\]
\end{proposition}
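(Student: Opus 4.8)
The plan is to condition on the number of summands. Write $X = \sum_{i=1}^N U_i$, where $N$ has distribution $\Geom(p)$ and the $U_i$ are i.i.d.\ with distribution $U[a,b]$ and independent of $N$. The key observation is that for each fixed integer $k \ge 1$, the partial sum $S_k = \sum_{i=1}^k U_i$ has a distribution that is symmetric about its mean $m_k = k(a+b)/2$: indeed, each $U_i - (a+b)/2$ is uniform on $[-(b-a)/2,(b-a)/2]$ and hence symmetric about $0$, and a sum of independent random variables each symmetric about $0$ is again symmetric about $0$.

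From this symmetry I would deduce that $\P(S_k \ge m_k) \ge 1/2$: reflecting about $m_k$ gives $\P(S_k \ge m_k) = \P(S_k \le m_k)$, and these two probabilities sum to $1 + \P(S_k = m_k) \ge 1$. Since $t \mapsto \P(S_k \ge t)$ is non-increasing, it follows that $\P(S_k \ge x) \ge \P(S_k \ge m_k) \ge 1/2$ for every $x \le m_k$, and the condition $x \le m_k$ is exactly the condition $k \ge 2x/(a+b)$.

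Finally I would sum over the possible values of $N$, using independence of $N$ and the $U_i$:
\[
\P(X \ge x) = \sum_{k \ge 1} \P(N = k)\,\P(S_k \ge x) \ge \frac12 \sum_{k \,\ge\, 2x/(a+b)} \P(N = k) = \frac12\,\P\!\left(N \ge \frac{2x}{a+b}\right),
\]
where the last equality uses that $N$ is integer-valued, so that the event $\{N \ge 2x/(a+b)\}$ coincides with $\{N \ge \lceil 2x/(a+b)\rceil\}$, which is precisely the index set of the sum. Since $N$ and $Y$ have the same distribution, this is the claimed inequality. There is no substantial obstacle here; the only point requiring a moment's care is matching the weak inequalities together with the integrality of $N$ at the last step, and this works out automatically.
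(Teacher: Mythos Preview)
Your argument is correct and is exactly the approach the paper takes: it writes $X$ as a sum of $Y$ independent $U[a,b]$ variables and uses that this sum is at least $(a+b)Y/2$ with probability $1/2$ by symmetry. You have simply spelled out the conditioning and the handling of the integrality of $N$ in more detail than the paper does.
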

\begin{proof}
To see this, write $X$ as a sum of $Y$ independent copies of a random variable with distribution $U[a,b]$; this sum is at least $(a+b)Y/2$ with probability $1/2$, by symmetry.
\end{proof}

\begin{proof}[Proof of Theorem~\ref{t:toy}]
Let $\Delta_\gamma$ denote a random colouring of the line into intervals where the lengths of all the red intervals are deterministically $1$ and those of the blue intervals are distributed according to the (uniform) $U[0,1+\gamma]$ distribution. 
We wish to show that the outcome of linear coalescence starting from $\Delta_\gamma$ is almost surely a red-win if $\gamma$ is suitably small, and almost surely a blue-win if $\gamma$ is sufficiently large.

For $p\in[0,1]$, we write $\Delta_{p, \gamma}$ for the colouring of the line into intervals obtained from $\Delta_\gamma$ as follows: we first recolour all blue intervals of length less than $1$ (which we may do since all red intervals initially have length $1$), and having recoloured all such blue intervals, we then recolour, independently with probability $p$, those red intervals which (still) have length exactly 1 (which we may again do since we all blue intervals now have length greater than $1$).

After recolouring all the blue intervals of length less than $1$ (which recolours a $1/(1+\gamma)$ proportion of all the blue intervals), it is easy to check that the distribution of the lengths of the blue intervals is $1+U[0,\gamma]$, and that of the red intervals is 
\[1+\left(-1+\Geom\left(\frac{1}{1+\gamma}\right)\right)\circ (1+U[0,1]).\]
Next, after we recolour, independently with probability $p$, those red intervals of length exactly $1$ (which recolours a $p\gamma/(1+\gamma)$ proportion of all the red intervals), we have, after expanding out the geometric distributions for clarity, writing the distributions of both colours as a mixture of a sequence of simpler distributions, and setting $q = 1-p$, the following distributions of lengths.

\[\begin{array}{cc|cc}
\text{Red}&\text{Probability}&\text{Blue}&\text{Probability}\\\hline
1& \frac{q\gamma}{q\gamma+1}&1+1\circ U[0,\gamma]&\frac{q\gamma+1}{1+\gamma}\\
2+1\circ U[0,1]&\frac{\gamma}{(q\gamma+1)(1+\gamma)}&
3+2\circ U[0,\gamma]&(\frac{q\gamma+1}{1+\gamma})(\frac{p\gamma}{1+\gamma})\\
3+2\circ U[0,1]&\frac{\gamma}{(q\gamma+1)(1+\gamma)^2}&
5+3\circ U[0,\gamma]&(\frac{q\gamma+1}{1+\gamma})(\frac{p\gamma}{1+\gamma})^2\\
\dots&\dots&\dots&\dots\\
\end{array}\]

More precisely, the lengths of distinct intervals in $\Delta_{p, \gamma}$ are independent, and the distributions of the lengths of the red and blue intervals are as follows. The length of a red interval, with probability $q\gamma/(q\gamma+1)$, is exactly $1$, and with probability $\gamma/((q\gamma+1)(1+\gamma)^k)$, is drawn from the distribution $(k+1)+k\circ U[0,1]$ for each $k \ge 1$. This claim perhaps merits a few words of explanation. After the second round of recolourings, a $(1-p)\gamma/(1+\gamma) = q\gamma/(1+\gamma)$ fraction of the red intervals from after the first round of recolourings have length exactly $1$; noting that a $p\gamma/(1+\gamma)$ fraction of the red intervals from after the first round are recoloured (and consequently removed) after the second round, we arrive at the probability that a red interval has length exactly $1$ after the second round,  which is $(1 - p\gamma/(1+\gamma))^{-1}(q\gamma/(1+\gamma)) = q\gamma / (q\gamma + 1)$. We may reason analogously in the other cases as well. Similarly, it may be seen that the length of a blue interval is drawn from the distribution $(2k-1)+k\circ U[0,\gamma]$ with probability $((q\gamma+1)/(1+\gamma))(p\gamma/(1+\gamma))^{k-1}$ for each $k \ge 1$. Let us write $\Scr{R}_p(\gamma)$ and $\Scr{B}_p(\gamma)$ respectively to denote these distributions of the lengths of the red and blue intervals in $\Delta_{p, \gamma}$.

To show that a particular colour wins, we shall choose a suitable value of $p \in [0,1]$ and apply Theorem~\ref{t:main} appropriately to the distributions of the lengths of the red and blue intervals of $\Delta_{p,\gamma}$.

\textbf{Case 1: $\gamma < \gamma_R = 0.1216$.} We wish to show that red wins in this case. We fix $p = 1$ and $q = 0$ and write $\Scr{R}(\gamma)$ and $\Scr{B}(\gamma)$ for the distributions $\Scr{R}_{1}(\gamma)$ and $\Scr{B}_{1}(\gamma)$ respectively. It is sufficient to show that $\Scr{R}(\gamma) \succcurlyeq \Scr{F}(\Lambda)$ and $\Scr{G}(a) \succcurlyeq \Scr{B}(\gamma) $ for some $a<1$, where $\Lambda$ is as in the statement of Theorem~\ref{t:main}. 

We first show that when $\gamma$ is sufficiently small, $\Scr{G}(a) \succcurlyeq \Scr{B}(\gamma)$ for some $a<1$. Suppose that $0 \le \gamma \le \gamma_R$, and let $X$ and $Y$ be random variables with distributions $\Scr{B}(\gamma)$ and $\Scr{G}(a)$ respectively. Since both $\Scr{B}(\gamma)$ and $\Scr{G}(a)$ are supported on $[1,\infty)$, we need to show for each $x \ge 1$ that 
\[ \P(X \ge x) \le \P (Y \ge x).\]

First, when $x \in [1,1+\gamma]$, we note that the density function of $\Scr{B}(\gamma)$ in this range is $1/(\gamma(1+\gamma)) > 7$, while the density function of $\Scr{G}(a)$ in this range is $2(a+1)^2/(a+x)^3\le 2/(a+1)\le 2$. Thus, the stochastic domination condition holds when $x \in [1, 1+\gamma]$. Now assume that $x\in[2k-1+k\gamma,2k+1+(k+1)\gamma]$ for some $k \ge 1$. In this case, we note that
\[ 
\P(X\ge x) \le \P(X \ge 2k-1+k\gamma) \le  \left(\frac{\gamma}{1+\gamma}\right)^k\le 9^{-k}
\]
and then check, for $a$ sufficiently close to $1$, that we have
\[
\P(Y\ge x) \ge \frac{(a+1)^2}{(a+2k+1+(k+1)\gamma)^2} 
\ge \frac{3.99}{((k+1)(2+\gamma))^{2}}\ge \frac{3.99}{5(k+1)^2}.
\]
It is then easy to see that $3.99/(5(k+1)^2)\ge 9^{-k}$ for all $k\ge1$.

We now wish to show that $\Scr{R}(\gamma) \succcurlyeq \Scr{F}(\Lambda)$ for all $0 \le \gamma \le \gamma_R$. The distributions $\Scr{R}(\gamma)$ stochastically decrease with $\gamma$; indeed, this follows from the fact that the $\Geom(1/(1+\gamma))$ distributions stochastically decrease with $\gamma$. Hence, it suffices to show that $\Scr{R}(\gamma_R) \succcurlyeq \Scr{F}(\Lambda)$.

Now, let $X$ and $Y$ have distributions $\Scr{R}(\gamma_R)$ and $\Scr{F}(\Lambda) = $1$ + \Exp(\Lambda)$ respectively. Since both distributions are supported on $[1, \infty)$, we need to show for each $x \ge 0$ that
\begin{equation}\label{e:dom_1}
\exp{\left(-\frac{x}{\Lambda}\right)} = \P(Y \ge x+1) \le \P (X \ge x+1).
\end{equation}
Applying Proposition~\ref{c:boundbygeom}, we have for each $x \ge 0$, 
\begin{equation}\label{e:bbnd}
\P(X\ge x+1)\ge \frac12(1+\gamma_R)^{-2x/3}.
\end{equation}
We need this to be at least $e^{-x/\Lambda}$; unfortunately this does not hold for small $x$. However, note that this holds for $x\ge x_0=2000$ since
\[
0.1216 = \gamma_R < e^{(3/2\Lambda)-(3\log 2)/(2x_0)}-1
\]
which establishes~\eqref{e:dom_1} on $[x_0, \infty)$. Then we inductively define $x_i$ by 
\begin{equation}\label{e:seq}
  e^{-x_{i+1}/\Lambda}=\P(X\ge x_i+1)
\end{equation}
and check with the help of a computer that there exists an $n\in \N$ such that $x_n = 0$ (i.e., $x_{n-1}<1$), and that the sequence $(x_i)_{i=0}^n$ is monotone decreasing. Consequently, we see that~\eqref{e:dom_1} holds on $[x_{i+1},x_i]$ by~\eqref{e:seq} for each $i\ge 0$. We briefly sketch how we check this claim on a computer. We can bound the distribution $\Scr{R}(\gamma_R)$ numerically in terms of sums of uniform distributions by expanding out the geometric distribution in its definition. For large $k$, the distribution $k\circ U[a,b]$ can be bounded by the Berry--Esseen theorem in the form proved in~\cite{BE}. This gives a bound of $0.5751/\sqrt{k}$ between the cumulative distribution functions of $k\circ U[a,b]$ and the corresponding normal approximation. The effect of this approximation is that it `almost' removes the factor of $1/2$ in~\eqref{e:bbnd}, allowing us to prove stochastic domination down to much smaller $x$. This is still inadequate for very small $x$ however, so we finish by calculating, for small $k$, the (Irwin--Hall) distribution of $k\circ U[a,b]$ exactly in terms of piecewise polynomial functions.

\textbf{Case 2: $\gamma \ge \gamma_B = 6.048$.} In this case, we would like to show that blue wins. Choose a constant $c > 5/4$ and set $p=1-c/\gamma$ so that $q\gamma = c$.

Our aim is to show that if $\gamma$ is sufficiently large, then $\Scr{G}(a) \succcurlyeq \Scr{R}_{p}(\gamma)=\Scr{R}(\gamma)$ for some $a<1$ and $ \Scr{B}_{p}(\gamma)=\Scr{B}(\gamma)\succcurlyeq \Scr{F}(\Lambda)$. 

We first show that the distribution of the lengths of the red intervals of $\Delta_{p,\gamma}$ is stochastically dominated by a $\Scr{G}(a)$ distribution with $a<1$. Let $X$ and $Y$ be random variables with distributions $\Scr{R}(\gamma)$ and $\Scr{G}(a)$ respectively.
For $x\in[1,2]$ it is enough to show that
\[
 \P(X>x) \le \P(X>1) = 1-\frac{q\gamma}{q\gamma+1}
 \le \frac{(a+1)^2}{(a+2)^2}=\P(Y\ge2) \le \P(Y>x)
\]
which, since $q\gamma=c>5/4$, is true when $a$ is sufficiently close to 1. For $x\in[2,3]$, we note that the density function of $\Scr{R}(\gamma)$ is $\gamma/((q\gamma+1)(1+\gamma))$, which is pointwise greater than the corresponding density function $2(a+1)^2/(a+x)^3$ of $\Scr{G}(a)$ provided $a$ is sufficiently close to 1. Now, for $x\in[2k+1,2k+3]$ with $k\ge1$ and $a$ sufficiently close to $1$, we have
\begin{align*}
 \P(X\ge x)&\le \frac{1}{(q\gamma+1)(1+\gamma)^k}\le 6^{-k}/2 \text{, and}\\
 \P(Y \ge x)&\ge \frac{(a+1)^2}{(a+2k+3)^2}\ge \frac{9(k+2)^{-2}}{10};
\end{align*}
it is clear that $9(k+2)^{-2}/10\ge 6^{-k}/2$ for each $k\ge 1$.

Finally we show that $\Scr{B}(\gamma) \succcurlyeq \Scr{F}(\Lambda)$ for all sufficiently large $\gamma$ using a strategy similar to the one used in the previous case. 

Recall, the definition of $\Scr{B}(\gamma)$: a random variable with this distribution is drawn with probability $((q\gamma+1)/(1+\gamma))(p\gamma/(1+\gamma)^{k-1})$ from the distribution $(2k-1)+k\circ U[0,\gamma]$ for each $k \ge 1$. We claim that the distributions $\Scr{B}(\gamma)$ stochastically increase with $\gamma$. This follows from the fact that the distributions $U[0,\gamma]$ and $\Geom{((\gamma - c)/(1+\gamma))}$ are both stochastically increasing in $\gamma$.

Hence, it is sufficient to show that $\Scr{B}(\gamma_B) \succcurlyeq \Scr{F}(\Lambda)$. Let $X$ be a random variable with distribution $\Scr{B}(\gamma_B)$. We would like to show, for all $x \ge 0$, that 
\begin{equation}\label{e:dom_2}
\P(X\ge x+1) \ge e^{-x/\Lambda}.
\end{equation}

Let $Y$ be a (geometric) $\Geom(p\gamma_B/(1+\gamma_B)) = \Geom((\gamma_B - c)/(1+\gamma_B))$ random variable. We deduce from Proposition~\ref{c:boundbygeom} that
\begin{align*}
 \P(X\ge x+1)&\ge
 \frac12 \P(Y\ge (x+2)/(2+\gamma_B/2))\\
 &\ge \frac{1}{2}\left(\frac{\gamma_B-c}{1+\gamma_B}\right)^{(x+2)/(2+\gamma_B/2)}
\end{align*}
for $x\ge x_0=10^6$. This bound can be checked to be at least $e^{-x/\Lambda}$ for all $x \ge x_0$ using the fact that $\gamma_B=6.048$. This shows that~\eqref{e:dom_2} holds on $[x_0, \infty)$. As before, we inductively define $x_i$ by $e^{-x_{i+1}/\Lambda}=\P(X\ge x_i+1)$ and check with the help of a computer that there exists an $n\in \N$ such that $x_n <2$, and that the sequence $(x_i)_{i=0}^n$ is monotone decreasing. This verifies~\eqref{e:dom_2} on $[2, x_0]$. Finally, it is easy to check~\eqref{e:dom_2} on $[0,2]$ as the density function $((q\gamma_B + 1)/(1+\gamma_B))/\gamma_B$ of $\Scr{B}(\gamma_B)$ is pointwise less than the density function of $\Scr{F}(\Lambda)$ in this region since $e^{-x/\Lambda}/\Lambda > e^{-2/\Lambda}/\Lambda$.
\end{proof}

\section{Conclusion}\label{s:conc}
It is possible that there exist natural analytical conditions on the red and blue distributions that are useful in determining the outcome of linear coalescence. It would be very interesting to determine such sufficient conditions if they exist; in this paper, we have managed to rule out two very natural candidates. 

There likely exist quite `dissimilar' distributions $\P_R$ and $\P_B$ for which the outcome of the coalescence process is a tie. To prove such a result analytically, it would seem necessary to track the process fairly precisely. It is unclear whether the methods developed here would be of much help in such a task; new ideas are probably required. However, it is possible that there exists a natural topology on the space of probability distributions on the positive reals with respect to which the relation $\rhd$ is open; we would not be surprised if one could use such a result to demonstrate the existence of dissimilar distributions $\P_R$ and $\P_B$ for which linear coalescence results in a tie.

We remind the reader of one side-effect of the absence of monotonicity in linear coalescence. It would be tempting to prove a high confidence result which is stronger than Theorem~\ref{t:toy} in a manner analogous to the proof of Claim~\ref{t:counter}. Indeed, when comparing the red distribution that is deterministically $1$ with the $U[0,1+\gamma]$ blue distribution, we can show that with high confidence, red wins almost surely when $\gamma=1.16$, and blue wins almost surely when $\gamma=1.19$. Unfortunately we cannot deduce that red wins when $\gamma<1.16$ or that blue wins when $\gamma>1.19$ from this. This illustrates one key drawback of proving results with high confidence: such results can only be applied to specific pairs of distributions, and in the absence of monotonicity, we are unable to do much more than speculate.

Perhaps the most important question not addressed in this paper which merits investigation is that of devising analytical techniques to compute the probability that a large monochromatic central interval appears in the coalescence process on a large finite interval. In addition to being an interesting question in its own right, it would help transform the high confidence results in this note into theorems proper. 

\section*{Acknowledgements}
The first and second authors were partially supported by NSF grant DMS-1600742, and the second author also wishes to acknowledge support from EU MULTIPLEX grant 317532.

Some of the research in this paper was carried out while the third and fourth authors were visitors at the University of Memphis and was continued while the authors were visitors at the IMT Institute for Advanced Studies Lucca. The authors are grateful to Guido Caldarelli and the other members of the Complex Networks Group at IMT Lucca for their hospitality, and in addition, the third and the fourth authors are grateful for the hospitality of the University of Memphis.

Finally, we would like to thank the anonymous referees for their careful reading of this paper as well as their many helpful comments.

\bibliographystyle{amsplain}
\bibliography{coalescence}

\end{document}